        \title[On the group cohomology of the semi-direct product  $\IZ^n \rtimes_{\rho} \IZ/m$]
        {On the group cohomology of the semi-direct product $\IZ^n \rtimes_{\rho} \IZ/m$ and a conjecture of Adem-Ge-Pan-Petrosyan}
       \author{Martin Langer}
       \author{Wolfgang L\"uck}
        \address{Rheinische Wilhelms-Universit\"at Bonn\\
               Mathematisches Institut\\
               Endenicher Allee 62, 53115 Bonn, Germany}
          \email{martinlanger@yahoo.com}
          \email{wolfgang.lueck@him.uni-bonn.de}
      \urladdr{http://www.math.uni-bonn.de/people/mlanger}
      \urladdr{http://www.him.uni-bonn.de/lueck}
         \date{May, 2011}
     \keywords{Tate and group (co-)homology, split extensions of finitely generated free abelian groups by finite cyclic groups.}
    \subjclass[2010]{20J06, 55T10}
\DeclareMathAlphabet{\matheurm}{U}{eur}{m}{n}
\DeclareMathOperator{\aut}{aut}
\DeclareMathOperator{\ch}{ch}
\DeclareMathOperator{\Ext}{Ext}
\DeclareMathOperator{\id}{id}
\DeclareMathOperator{\pt}{pt}
\DeclareMathOperator{\pr}{pr}
\DeclareMathOperator{\Sw}{Sw}
  \newcommand{\IQ}{\mathbb{Q}}
  \newcommand{\IR}{\mathbb{R}}
  \newcommand{\IZ}{\mathbb{Z}}
  \newcommand{\cala}{\mathcal{A}}
  \newcommand{\calm}{\mathcal{M}}
  \newcommand{\calo}{\mathcal{O}}
  \newcommand{\calp}{\mathcal{P}}
\newcounter{commentcounter}
 \newcommand{\dual}{^{\wedge}}
\theoremstyle{plain}
\newtheorem{theorem}{Theorem}[section]
\newtheorem{lemma}[theorem]{Lemma}
\newtheorem{corollary}[theorem]{Corollary}
\newtheorem{proposition}[theorem]{Proposition}
\newtheorem{conjecture}[theorem]{Conjecture}
\newtheorem*{theorem*}{Theorem}
\theoremstyle{definition}
\newtheorem{definition}[theorem]{Definition}
\newtheorem{example}[theorem]{Example}
\newtheorem{remark}[theorem]{Remark}
\theoremstyle{remark}
\let\c@equation=\c@theorem\makeatother
\newcommand{\ev}{\mathit{ev}}
\newcommand{\odd}{\mathit{odd}}
\newcommand{\version}[1]                       %marks the date of last editing and compilation
{\begin{center} last edited on #1\\
last compiled on \today \\
name of texfile: \jobname
\end{center}
}
\newcommand{\xycomsquare}[8]                      % kommutatives Quadrat (xy-Version
{\xymatrix{#1 \ar[r]^{#2} \ar[d]^{#4} &
#3 \ar[d]^{#5}  \\
#6\ar[r]^{#7} &
#8
}
}
\newcommand{\xycomsquareminus}[8]                      % kommutatives Quadrat (xy-Version)
{\xymatrix{#1 \ar[r]^-{#2} \ar[d]^-{#4} &
#3 \ar[d]^-{#5}  \\
#6\ar[r]^-{#7} &
#8
}
}
\begin{document}

\begin{abstract}
  Consider the semi-direct product $\IZ^n \rtimes_{\rho} \IZ/m$. A conjecture of
  Adem-Ge-Pan-Petrosyan predicts that the associated
  Lyndon-Hochschild-Serre spectral sequence collapses. We prove this
  conjecture provided that the $\IZ/m$-action on $\IZ^n$ is free outside the
  origin. We disprove the conjecture in general, namely, we give an example with
  $n = 6$ and $m = 4$, where the second differential does not vanish.
\end{abstract}

\maketitle

\newlength{\origlabelwidth}
\setlength\origlabelwidth\labelwidth

%%%%%%%%%%%%%%%%%%%%%%%%%%%%%%%%%%%%%%%%%%%%%%%%%%%%%%%%%%%%%%%%%%%%%%%%
%%%%%%%%%%%%%%%%%%%%%%%%%%% Introduction %%%%%%%%%%%%%%%%%%%%%%%%%%%%%%%%%%%
%%%%%%%%%%%%%%%%%%%%%%%%%%%%%%%%%%%%%%%%%%%%%%%%%%%%%%%%%%%%%%%%%%%%%%%%

\typeout{----------------------------  Introduction ------------------------------------}
\section*{Introduction}
\label{sec:introduction}

Throughout this paper let $G \cong \IZ/m$ be a finite cyclic group of order $m$
and let $L\cong \IZ^n$ be a finitely generated free abelian group of rank $n$.
Let $\rho \colon G \to \aut_{\IZ}(L)$ be a group homomorphism. It puts the
structure of a $\IZ G$-module on $L$. Let $\Gamma$ be the associated semi-direct
product $L \rtimes_{\rho} G$. We will make the assumption that the $G$-action of $G$ on $L$ is free
outside the origin unless stated explicitly differently.

Here is a brief summary of our results. We will show that the Tate cohomology
$\widehat{H}^i(G;\Lambda^j(L))$ vanishes for all $i,j$ for which $i + j$ is odd.
This will be the key ingredient for computations of the topological $K$-theory of
the group $C^*$-algebra of $\Gamma$ which will be carried out in a different
paper, generalizing previous calculations of Davis-L\"uck in the special case
where $m$ is a prime. We determine the group cohomology of $\Gamma$ in high dimensions
using classifying spaces for proper actions. A conjecture due to Adem-Ge-Pan-Petrosyan says that 
the Lyndon-Hochschild-Serre spectral sequence associated to the semi-direct product $L \rtimes_{\rho} G$ collapses.
We will prove it under the assumption mentioned above. Without this assumption we give counterexamples.

%%%%%%%%%%%%%%%%%%%%%%%%%%%%%%%%%%%%%%%%%%%%%%%%%%%%%%%%%%%%%%%%%%%%%%%%

\subsection{Tate cohomology}
\label{subsec:Tate_cohomology}

In the sequel $\Lambda^j =
\Lambda_{\IZ}^j$ stands for the $j$-th exterior power of a $\IZ$-module.

\begin{theorem}[Tate cohomology]
\label{the:Tate_cohomology}
Suppose that  the $G$-action on $L$ is free outside the origin, 
i.e., if for $g \in G$ and $x \in  L$ we have $gx = x$, then $g = 1$ or $x = 0$.

Then we get for the Tate cohomology 
$$\widehat{H}^i(G;\Lambda^j(L)) = 0$$
for all $i,j$ for which $i + j$ is odd.
\end{theorem}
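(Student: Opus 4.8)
The plan is to exploit the fact that a generator $g\in G$ acts on $L\cong\IZ^n$ with characteristic polynomial dividing $t^m-1$, and that freeness outside the origin forces strong restrictions on which cyclotomic factors can occur. First I would decompose $L\ox_{\IZ}\IQ$ as a $\IQ G$-module. Since $G$ is cyclic of order $m$, $\IQ G\cong\prod_{d\mid m}\IQ(\zeta_d)$, and freeness outside the origin means that no nonzero vector is fixed by a nontrivial element; equivalently, for every $1\neq g'\in G$ the eigenvalue $1$ does not occur for $g'$ acting on $L\ox\IQ$. Translated into cyclotomic summands this says $L\ox\IQ$ contains no copy of the trivial summand $\IQ$ and, more importantly, only summands $\IQ(\zeta_d)$ for $d\mid m$ with $d$ such that the action of the order-$(m/d)$ elements has no fixed points — working this out, freeness outside the origin is equivalent to: every irreducible $\IQ G$-summand of $L\ox\IQ$ is $\IQ(\zeta_d)$ with $d$ ranging over divisors of $m$ for which the corresponding ``layer'' contributes (one checks $d=1$ is excluded, and more). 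The key structural input is that $n$ is divisible by $\varphi(d)$ for each occurring $d$, and in particular $L\ox\IQ$ is a sum of even-dimensional pieces unless $m\le 2$; the parity bookkeeping will come from here.

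Next I would reduce the Tate cohomology computation to a manageable form. For $G=\IZ/m$ cyclic, $\widehat H^i(G;M)$ is $2$-periodic in $i$, so it suffices to treat $i$ even and $i$ odd, i.e.\ to compute $\widehat H^0(G;M)=M^G/N_G M$ and $\widehat H^{-1}(G;M)=\ker(N_G)/I_G M$ where $N_G=\sum_{g\in G}g$. The claim ``$\widehat H^i(G;\Lambda^j L)=0$ when $i+j$ is odd'' then splits into two assertions: $\widehat H^{\mathrm{even}}(G;\Lambda^j L)=0$ for $j$ odd, and $\widehat H^{\mathrm{odd}}(G;\Lambda^j L)=0$ for $j$ even. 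Because Tate cohomology is finite and annihilated by $m$, and $\Lambda^j(L)$ is $\IZ$-free of finite rank, I would argue prime by prime: fix a prime $p\mid m$, let $G_p\le G$ be the $p$-Sylow subgroup (cyclic of order $p^a$), and use that the restriction $\widehat H^*(G;\Lambda^j L)_{(p)}\hookrightarrow \widehat H^*(G_p;\Lambda^j L)$ is injective onto the stable elements. So it is enough to prove the vanishing for $G$ a cyclic $p$-group.

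For a cyclic $p$-group $G$ acting on $L\cong\IZ^n$ freely outside the origin, the possible $\IZ G$-module structures are quite rigid: after inverting $p$ (or rationally) $L$ becomes a sum of $\IZ[\zeta_{p^k}]$'s, and the ``free outside origin'' condition forces each generator of the relevant subgroups to act without eigenvalue $1$, so every rational summand is $\IQ(\zeta_{p^k})$ with $k\ge 1$ of even dimension $\varphi(p^k)$ when $p$ is odd, and when $p=2$ one must handle the extra one-dimensional summand $\IQ(\zeta_2)$ (the sign representation) carefully. The main computation is then to evaluate $\widehat H^*(G;\Lambda^j L)$ via the behaviour of exterior powers under the splitting $L\ox\IQ=\bigoplus V_\alpha$: since $\Lambda^j(\bigoplus V_\alpha)=\bigoplus_{\sum j_\alpha=j}\bigotimes\Lambda^{j_\alpha}V_\alpha$, and Tate cohomology of a tensor product can be accessed through a cup-product/Künneth argument, the parity of $j$ should match the parity of $i$ forced by the structure of each $\Lambda^{j_\alpha}V_\alpha$ as a $G$-representation (each $V_\alpha$ being induced-like, $\Lambda^{\mathrm{even}}$ versus $\Lambda^{\mathrm{odd}}$ of it carry complementary ``orientation'' characters). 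Concretely I expect to reduce to the model case $L=\IZ[\zeta_{p^k}]$ with $G=\IZ/p^k$ acting by multiplication, compute $\widehat H^i(G;\Lambda^j \IZ[\zeta_{p^k}])$ directly, observe the $i+j$ parity vanishing there, and then bootstrap to general $L$ by the tensor decomposition together with the transfer/restriction argument over all $p\mid m$.

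The hard part will be two things simultaneously: first, pinning down exactly which $\IZ G$-lattices $L$ can occur under the freeness hypothesis — integrally (not just rationally) the lattice structure of $\IZ[\zeta]$-modules is subtle, so I would want to work with the rational representation plus a separate, softer argument that Tate cohomology is a ``rational'' invariant here in the sense that it only depends on $L\ox\IZ_{(p)}$ and even then is controlled by $L\ox\IQ$ up to the contributions detected by a single prime. Second, the Künneth/cup-product bookkeeping for Tate cohomology of tensor products of $\IZ G$-lattices is not as clean as for ordinary cohomology (there is no naive Künneth theorem for Tate cohomology), so the real technical obstacle is setting up a correct spectral sequence or filtration argument that lets the parities add. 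I anticipate the cleanest route is an induction on the number of rational summands of $L$, using a short exact sequence $0\to L'\to L\to L''\to 0$ of $\IZ G$-lattices (splitting rationally), the induced long exact sequences in Tate cohomology, the behaviour of $\Lambda^j$ on extensions ($0\to\Lambda^j L'\to\Lambda^j L\to \bigoplus_{a+b=j,\,b\ge 1}\Lambda^a L'\ox\Lambda^b L''\to\cdots$, i.e.\ a filtration with subquotients $\Lambda^a L'\ox\Lambda^b L''$), and induction together with the base case $L=\IZ[\zeta_{p^k}]$.
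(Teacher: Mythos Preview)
Your strategic outline — reduce to the $p$-Sylow subgroup, then to the model lattice $\IZ[\zeta_{p^r}]$, then use the exponential law $\Lambda^j(\bigoplus V_\alpha)=\bigoplus\bigotimes\Lambda^{j_\alpha}V_\alpha$ — matches the paper's approach closely. But there are two genuine gaps.

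First, the reduction from a general free-outside-the-origin lattice to copies of $\IZ[\zeta]$ is not only a rational statement. You note correctly that the integral structure is subtle and propose to argue that Tate cohomology only sees $L\otimes\IZ_{(p)}$; but you stop there. The paper closes this gap with a concrete lemma: $\IZ_{(p)}[\zeta]$ is a discrete valuation ring (identified with the localization of $\IZ[\zeta]$ at the prime $(1-\zeta)$), so every finitely generated torsion-free module over it is free, giving $L_{(p)}\cong(\IZ_{(p)}[\zeta])^k$ on the nose. Without this you are stuck with a direct sum of ideals in a Dedekind domain, which is not enough.

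Second — and this is the real hole — you treat the base case $\widehat H^i(G;\Lambda^j\IZ[\zeta])$ as something to ``compute directly'' and then ``observe the $i+j$ parity vanishing''. This is in fact the heart of the argument, and it is not a routine computation. The paper's mechanism is a specific resolution
\[
0\;\to\;P\;\to\;F_1\;\to\;\cdots\;\to\;F_j\;\to\;\Lambda^j\IZ[\zeta]\;\to\;0
\]
of $\IZ G$-modules in which the $F_i$ are free and $P$ is a \emph{permutation} module. This comes from the Schur complex associated to the short exact sequence $0\to A\to\IZ G\to\IZ[\zeta]\to 0$ (with $A=\IZ G/(1-t^{p^{r-1}})$), namely the exact sequence with terms $\Gamma^{j-i}A\otimes\Lambda^i\IZ G$; a careful grading by functions $\IZ/p^{r-1}\to\IN$ decomposes it into pieces that either don't contribute to $\Lambda^j\IZ[\zeta]$ or have free middle terms, leaving only the divided-power term $\Gamma^jA$ (a permutation module) at the left end. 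Dimension-shifting along this resolution gives $\widehat H^i(G;\Lambda^j\IZ[\zeta])\cong\widehat H^{i+j}(G;P)$, and permutation modules have vanishing Tate cohomology in odd degrees by Shapiro. Nothing in your proposal suggests this step, and I don't see an alternative route that avoids it; a direct attack on $\Lambda^j\IZ[\zeta]$ for general $j$ is not tractable.

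Finally, your proposed handling of the passage from $\IZ[\zeta]$ to $\IZ[\zeta]^k$ via K\"unneth-type or filtration arguments is, as you yourself flag, problematic for Tate cohomology. The paper sidesteps this entirely: once one has the resolution above for each tensor factor, one simply tensors and splices the resolutions to obtain a resolution of $\Lambda^{j_1}\IZ[\zeta]\otimes\cdots\otimes\Lambda^{j_k}\IZ[\zeta]$ of the same shape (free middle terms, permutation module on the left), of length $j_1+\cdots+j_k$. This is elementary once the base-case resolution is in hand, and avoids any K\"unneth machinery.
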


Theorem~\ref{the:Tate_cohomology} will be proved in 
Section~\ref{sec:Proof_of_Theorem_the:Tate_cohomology}.

%%%%%%%%%%%%%%%%%%%%%%%%%%%%%%%%%%%%%%%%%%%%%%%%%%%%%%%%%%%%%%%%%%%%%%%%

\subsection{Motivation}
\label{subsec:Motivation}

Davis-L\"uck~\cite{Davis-Lueck(2010)} have computed the topological $K$-(co-)homology of $B\Gamma$ and $\underline{B}\Gamma$
and finally the topological $K$-theory of the reduced group $C^*$-algebra $C^*_r(\Gamma)$ in the special case
that $m$ is a prime. The result is:

\begin{theorem}[{\cite[Theorem~0.3]{Davis-Lueck(2010)}}]
\label{the:Davis-Lueck}
Suppose that $m = p$ for a prime $p$. Suppose that $G$ acts freely on $L$ outside the origin. 
Then

\begin{enumerate}
\item \label{the:Davis-Lueck:s}
There exists an integer $s$ uniquely determined by $(p-1)\cdot s = n$;

\item \label{the:Davis-Lueck:p_is_2} If $ p = 2$, then
$$K_m(C^*_r(\Gamma)) \cong
\begin{cases}
  \IZ^{3 \cdot 2^{n-1}} & m \; \text{even};
  \\
  0 & m \; \text{odd};
\end{cases}
$$
\item \label{the:Davis-Lueck_p_is_odd}
If $p$ is odd, then
$$K_m(C^*_r(\Gamma)) \cong
\begin{cases}
  \IZ^{d_{\ev}} & m \; \text{even};
  \\
  \IZ^{d_{\odd}} & m \; \text{odd},
\end{cases}
$$
where
\begin{eqnarray*}
  d_{\ev}
  & = &
  \frac{2^{(p-1)s} + p -1}{2p} + \frac{(p-1) \cdot p^{s-1}}{2} + (p-1) \cdot p^s;
  \\
  d_{\odd} 
  & = & 
  \frac{2^{(p-1)s} + p -1}{2p} -  \frac{(p-1) \cdot p^{s-1}}{2};
\end{eqnarray*}
\item \label{the:Davis-Lueck:torsionfree}
In particular $K_m(C^*_r(\Gamma))$ is always a finitely generated free abelian group.
\end{enumerate}
\end{theorem}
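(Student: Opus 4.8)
The plan is to deduce this from Theorem~\ref{the:Tate_cohomology} via the Baum-Connes conjecture. Since $\Gamma = L \rtimes_{\rho} G$ is virtually abelian, hence amenable, the Baum-Connes conjecture holds for $\Gamma$ (Higson-Kasparov), and the assembly map identifies $K_m(C^*_r(\Gamma))$ with the $\Gamma$-equivariant topological $K$-homology $K_m^{\Gamma}(\underline{E}\Gamma)$ of a model for the classifying space for proper actions. I would use the model $\underline{E}\Gamma = \IR^n = L \otimes_{\IZ} \IR$, with $L$ acting by translations and $G$ by $\rho \otimes_{\IZ} \IR$; freeness of the $G$-action away from the origin makes this a cocompact model all of whose isotropy groups are finite and conjugate in $\Gamma$ to subgroups of $G$. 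The assertion about $s$ is then immediate: $L \otimes_{\IZ} \IQ$ has no trivial $\IQ G$-summand --- such a summand would contain a nonzero $G$-fixed vector and, after clearing denominators, a nonzero $G$-fixed element of $L$, contradicting freeness --- so $L \otimes_{\IZ} \IQ \cong \IQ(\zeta_p)^{\oplus s}$ for a unique $s \geq 0$, and comparing dimensions gives $(p-1)s = n$.

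Next I would build a Mayer-Vietoris sequence for $K_*^{\Gamma}(\underline{E}\Gamma)$. As $p$ is prime, every nontrivial finite subgroup of $\Gamma$ has order $p$, any two distinct such subgroups meet trivially, and each is self-normalizing (its normalizer meets $L$ in $L^G = 0$, hence embeds into $G$). The L\"uck-Weiermann construction for the pair of families $\{1\} \subseteq \Fin$ therefore gives a $\Gamma$-homotopy pushout with corners $\coprod_{(P)} \Gamma \times_P EP$, $E\Gamma$, $\coprod_{(P)} \Gamma/P$ and $\underline{E}\Gamma$, where $(P)$ runs over the conjugacy classes of order-$p$ subgroups of $\Gamma$; this set is in bijection with $\widehat{H}^1(G;L)$, hence finite. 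Passing to equivariant $K$-homology produces a six-term exact sequence relating $K_*(B\Gamma)$, $\bigoplus_{(P)} K_*(B\IZ/p)$, $\bigoplus_{(P)} R_{\IC}(\IZ/p)$ and $K_*^{\Gamma}(\underline{E}\Gamma)$, in which the structural map $K_0(B\IZ/p) = \IZ \to R_{\IC}(\IZ/p)$ sends a generator to the class of the regular representation.

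What remains is to compute $K_*(B\Gamma)$, and this is where Theorem~\ref{the:Tate_cohomology} is the crucial input. In the Lyndon-Hochschild-Serre spectral sequence $E^2_{i,j} = H_i(G;\Lambda^j L) \Rightarrow H_{i+j}(\Gamma)$, rewriting the vanishing of $\widehat{H}^i(G;\Lambda^j L)$ for $i+j$ odd in terms of homology --- using the $2$-periodicity of $\widehat{H}^*(G;-)$ for the cyclic group $G$ --- kills $E^2_{i,j}$ for all $i \geq 1$ with $i \equiv j \pmod 2$; a count of total degrees then shows that the only potentially nonzero differentials are $d^i \colon E^i_{i,j} \to E^i_{0,i+j-1}$. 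This determines $H_*(\Gamma;\IZ)$: rationally $H_q(\Gamma;\IQ) \cong (\Lambda^q L_{\IQ})_G$, with Poincar\'e polynomial $\tfrac{1}{p}\bigl((1+x)^n + (p-1)(1 - x + \cdots + x^{p-1})^s\bigr)$, and the torsion is pinned down explicitly. Feeding $H_*(\Gamma;\IZ)$ into the Atiyah-Hirzebruch spectral sequence computes $K_*(B\Gamma)$, and substituting into the six-term sequence should give the stated answer: over $\IQ$ the bookkeeping is clean and yields the ranks $d_{\ev}$, $d_{\odd}$ (and $3\cdot 2^{n-1}$ when $p = 2$), while over $\IZ$ the infinitely generated divisible torsion in $\bigoplus_{(P)} K_1(B\IZ/p)$ must cancel precisely against the torsion of $K_*(B\Gamma)$, leaving $K_*^{\Gamma}(\underline{E}\Gamma)$ finitely generated and free.

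The hard part will be that last cancellation: one must control the torsion of $H_*(\Gamma;\IZ)$ finely enough to see that the Mayer-Vietoris sequence really does annihilate all torsion, and that no extension problem in the Atiyah-Hirzebruch spectral sequence for $B\Gamma$ stands in the way. This is exactly where the vanishing of $\widehat{H}^i(G;\Lambda^j L)$ for $i+j$ odd is indispensable --- it is what makes the Lyndon-Hochschild-Serre spectral sequence computable at all --- so Theorem~\ref{the:Tate_cohomology} is a genuine ingredient rather than a mere convenience.
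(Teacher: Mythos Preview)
This theorem is not proved in the present paper at all: it is quoted verbatim from Davis--L\"uck~\cite{Davis-Lueck(2010)} as motivation, and the authors explicitly say that ``the starting point of the proof of Theorem~\ref{the:Davis-Lueck} is Theorem~\ref{the:Tate_cohomology} in the special case $m=p$.'' So there is no proof in this paper to compare your proposal against; the paper's contribution is the ingredient Theorem~\ref{the:Tate_cohomology}, not the $K$-theory computation itself.

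That said, your outline is a faithful sketch of the Davis--L\"uck strategy and is consistent with how the present paper uses the same machinery for other purposes: the Baum--Connes identification $K_*(C_r^*(\Gamma))\cong K_*^{\Gamma}(\underline{E}\Gamma)$ for the amenable group $\Gamma$, the model $\IR\otimes_{\IZ} L$ for $\underline{E}\Gamma$ (used here in the proof of Theorem~\ref{the:cohomology_of_Gamma}), the $\Gamma$-pushout~\eqref{G-pushout_forunderlineEGamma} (exactly the L\"uck--Weiermann square you invoke), the count $|\calp|=|H^1(G;L)|=p^s$ (recorded here in Example~\ref{exa:n_is_p_for_group_cohomology_and_equivaraint_Euler}), and the role of Theorem~\ref{the:Tate_cohomology} in forcing the checkerboard pattern on the $E_2$-page (compare Lemma~\ref{lem:Theorem_Tate_implies_Theorem_positiv_and_vice_versa}). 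You are also honest that your sketch stops short of a proof: the torsion cancellation in the six-term sequence and the Atiyah--Hirzebruch extension problems are genuine work, and that work is carried out in~\cite{Davis-Lueck(2010)}, not here. If your goal was to reconstruct the paper's own argument for Theorem~\ref{the:Davis-Lueck}, the correct answer is simply that there isn't one---the paper defers to the cited reference.
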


This computation is interesting in its own right but has also interesting
consequences.  For instance, the (unstable) Gromov-Lawson-Rosenberg Conjecture
holds for $\Gamma$ in dimensions $\ge 5$
(see~\cite[Theorem~0.7]{Davis-Lueck(2010)}). Davis-L\"uck are planning to apply
a version of Theorem~\ref{the:Davis-Lueck}  for the algebraic $K$- and $L$-theory of the integral group ring of $G$
to the classification of total spaces of certain torus bundles over lens
spaces. The starting point of the proof of Theorem~\ref{the:Davis-Lueck} is
Theorem~\ref{the:Tate_cohomology} in the special case $m = p$.

Recent work of Cuntz-Li~\cite{Cuntz-Li(2009integers)}
and~\cite{Cuntz-Li(2009function)} on the topological $K$-theory of
$C^*$-algebras arising from number theory triggered the question whether
Theorem~\ref{the:Davis-Lueck} can be extended to the general case, i.e., to the
case where $m$ is any natural number.  This question is also interesting in its
own right. But it can only be attacked if Theorem~\ref{the:Tate_cohomology}
holds, and this will be proved in this paper.  The situation relevant for the
work of Cuntz and Li is the case where $L$ is the ring of integers $\calo$ of
an algebraic number field $K$, $G$ is  the finite cyclic group $\mu$ of roots
of unity in $K^{\times}$,  and $\rho \colon \mu \to \aut(\calo)$ comes from the
multiplication in $\calo$. Obviously $\mu$ acts freely on $\calo$ outside the
origin.

%%%%%%%%%%%%%%%%%%%%%%%%%%%%%%%%%%%%%%%%%%%%%%%%%%%%%%%%%%%%%%%%%%%%%%%%

\subsection{Group cohomology}
\label{subsec:group_cohomology}

We will compute the group cohomology of $\Gamma$ in sufficiently large dimensions by using
the classifying space for proper actions, namely, we will prove in
Section~\ref{sec:The_cohomology_of_Gamma}:

\begin{theorem}\label{the:cohomology_of_Gamma}
Suppose that $G$ acts freely on $L$ outside the origin. 
Let $\calm$ be a complete system of representatives of the conjugacy
classes of maximal finite subgroups of $\Gamma$.
Then we obtain for $2k > n$ an isomorphism
\[ 
{H}^{2k}(\Gamma) \xrightarrow{\varphi^{2k}} \bigoplus_{(M) \in \calp} \widetilde{H}^{2k}(M),
\]
where $\varphi^{2k}$ is the map induced by the various inclusions $M \to \Gamma$ for $M \in \calm$.
For $2k+1 > n$  we get
\[ 
{H}^{2k+1}(\Gamma) \cong 0.
\]
\end{theorem}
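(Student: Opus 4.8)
The plan is to use a model for the classifying space $\underline{E}\Gamma$ for proper actions that is as small and explicit as possible, and to read off $H^*(\Gamma)$ from the associated long exact Mayer--Vietoris type sequence in high dimensions. Since $L \cong \IZ^n$ acts freely on $\IR^n$ with the canonical action and $G$ is finite, the space $\IR^n$ (with the $\Gamma$-action coming from $\Gamma = L \rtimes_\rho G$) is a model for $\underline{E}\Gamma$: it is contractible, every isotropy group is finite (in fact the isotropy group of a point $x$ is the stabilizer $G_x \le G$ under the affine action), and the fixed-point sets $\IR^n{}^{H}$ are affine subspaces, hence contractible or empty, which is exactly the characterization of $\underline{E}\Gamma$. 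The maximal finite subgroups of $\Gamma$ are, up to conjugacy, the groups $G_x$ that are maximal; because $G$ acts freely on $L$ outside the origin, one checks that each conjugacy class $(M) \in \calm$ corresponds to a single $\Gamma$-orbit of points in $\IR^n$ with isotropy exactly $M$, and that two such orbits with the same $M$ are either equal or give conjugate subgroups — so $\calm$ indexes precisely the orbits of "singular" points with maximal isotropy.

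The key step is then to set up the pushout/cofiber sequence that compares $\underline{E}\Gamma$ with $E\Gamma$. Following the standard recipe (as in L\"uck--Weiermann and as used by Davis--L\"uck), there is a $\Gamma$-pushout
\[
\xymatrix{
\coprod_{(M) \in \calm} \Gamma \times_{M} EM \ar[r] \ar[d] & E\Gamma \ar[d] \\
\coprod_{(M) \in \calm} \Gamma \times_{M} \underline{E}M \ar[r] & \underline{E}\Gamma
}
\]
where $\underline{E}M = \pt$ since $M$ is finite. Taking Borel constructions (i.e.\ applying $\Gamma\backslash(E\Gamma \times_\Gamma -)$, equivalently passing to homotopy orbits) and using that $\Gamma \times_M EM$ has homotopy orbit space $BM$ while $\Gamma \times_M \pt$ has homotopy orbit space $BM$ as well — wait, more precisely one gets the homotopy pushout
\[
\xymatrix{
\coprod_{(M)} BM \ar[r] \ar[d] & B\Gamma \ar[d] \\
\coprod_{(M)} \pt \ar[r] & \Gamma \backslash \underline{E}\Gamma
}
\]
so that $\Gamma\backslash\underline{E}\Gamma$ is obtained from $B\Gamma$ by coning off the sub-$BM$'s. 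This yields the long exact sequence
\[
\cdots \to \widetilde H^{i}\bigl(\textstyle\coprod_{(M)} BM\bigr) \to H^{i}(\Gamma) \oplus \widetilde H^i(\Gamma\backslash\underline{E}\Gamma) \to \widetilde H^i\bigl(\Gamma\backslash\underline{E}\Gamma\bigr)\oplus\cdots
\]
— more transparently, a long exact sequence relating $H^i(\Gamma\backslash\underline{E}\Gamma)$, $H^i(\Gamma)$, and $\bigoplus_{(M)}\widetilde H^i(BM)$. The point is now that $\Gamma\backslash\underline{E}\Gamma = \Gamma\backslash\IR^n$ is a closed aspherical-ish orbifold of dimension $n$ (a quotient of $\IR^n$ by a crystallographic-type group), so $H^i(\Gamma\backslash\underline{E}\Gamma) = 0$ for $i > n$. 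Feeding this vanishing into the long exact sequence immediately gives, for $i > n$, an isomorphism $H^i(\Gamma) \xrightarrow{\;\cong\;} \bigoplus_{(M)\in\calm}\widetilde H^i(M)$ induced by the inclusions $M \hookrightarrow \Gamma$; and since each $M$ is a finite cyclic group, $\widetilde H^{\mathrm{odd}}(M) = 0$, which also forces $H^{2k+1}(\Gamma) = 0$ for $2k+1 > n$. (One should double-check the index range: the connecting maps at $i = n$ and $i = n+1$ need the vanishing $H^n(\Gamma\backslash\underline{E}\Gamma)$ only in degrees $> n$, so the clean isomorphism statement holds for $2k > n$ as claimed, with the odd case following for $2k+1 > n$.)

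The main obstacle I expect is the careful bookkeeping of the pushout and, in particular, verifying that $\coprod_{(M)\in\calm} \Gamma\times_M \underline{E}M \to \underline{E}\Gamma$ really is the "singular part" inclusion in the L\"uck--Weiermann sense — i.e.\ that the family of maximal finite subgroups is closed under conjugation and intersection-into-the-trivial-group in the way required, which is where freeness of the $G$-action on $L$ outside the origin is essential (it guarantees that distinct maximal finite subgroups intersect trivially, so the non-free part of $\underline{E}\Gamma$ is a disjoint union of induced pieces rather than something with more complicated incidence). A secondary technical point is identifying the homotopy-orbit space of $\Gamma \times_M EM$ with $BM$ and checking the map $BM \to B\Gamma$ is the one induced by inclusion; this is standard but must be stated. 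Once these identifications are in place, the cohomological consequence is a formal diagram chase in the long exact sequence together with the dimension bound on $\Gamma\backslash\IR^n$ and the vanishing of reduced odd cohomology of finite cyclic groups.
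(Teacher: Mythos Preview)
Your approach is essentially identical to the paper's: both invoke the L\"uck--Weiermann cellular $\Gamma$-pushout comparing $E\Gamma$ and $\underline{E}\Gamma$ via the induced pieces $\Gamma\times_M EM$ and $\Gamma/M$, pass to $\Gamma$-quotients to get a pushout involving $\coprod BM$, $B\Gamma$, $\coprod \pt$, and $\underline{B}\Gamma$, and then read off the result from the associated long exact cohomology sequence together with the $n$-dimensionality of $\underline{B}\Gamma \simeq \Gamma\backslash\IR^n$ and the vanishing of $\widetilde H^{\mathrm{odd}}$ for the finite cyclic groups $M$. One small correction: take \emph{strict} $\Gamma$-quotients of the cellular pushout rather than Borel constructions (the homotopy orbit space of $\underline{E}\Gamma$ is $B\Gamma$, not $\underline{B}\Gamma$), though since the square you ultimately write down is the correct one this is only an expository slip.
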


%%%%%%%%%%%%%%%%%%%%%%%%%%%%%%%%%%%%%%%%%%%%%%%%%%%%%%%%%%%%%%%%%%%%%%%%

\subsection{On a conjecture of Adem-Ge-Pan-Petrosyan}
\label{subsec:On_a_Conjecture_of_Adem-Ge-Pan-Petrosyan}

We will analyze the following conjecture due to
Adem-Ge-Pan-Petrosyan~\cite[Conjecture~5.2]{Adem-Ge-Pan-Petrosyan(2008)}).

\begin{conjecture}[Adem-Ge-Pan-Petrosyan]
  \label{con:Adem-Ge-Pan-Petrosyan}
  The Lyndon-Hochschild-Serre spectral sequence associated to the semi-direct
  product $L \rtimes_{\rho} G$ collapses in the strongest sense, i.e., all
  differentials in the $E_r$-term for $r \ge 2$ are trivial and all extension
  problems at the $E_{\infty}$-level are trivial. In particular we get for all
  $k \ge 0$
  \[
  H^k(\Gamma;\IZ) \cong \bigoplus_{i + j = k} H^i(G;H^j(L)).
  \]
\end{conjecture}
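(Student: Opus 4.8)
Throughout we keep the standing hypothesis that $G$ acts freely on $L$ outside the origin. The plan is to analyse the cohomological Lyndon--Hochschild--Serre spectral sequence
\[
E_2^{i,j} = H^i\bigl(G;H^j(L;\IZ)\bigr) \;\Longrightarrow\; H^{i+j}(\Gamma;\IZ)
\]
of $1 \to L \to \Gamma \to G \to 1$ and to show it degenerates completely, Theorem~\ref{the:Tate_cohomology} being the essential input. First I would identify $H^j(L;\IZ) \cong \Lambda^j(L^{\vee})$, where $L^{\vee} = \operatorname{Hom}_{\IZ}(L,\IZ)$ carries the contragredient $\IZ G$-action. The freeness hypothesis says precisely that $\rho(g)$ has no eigenvalue $1$ for $g \neq 1$; since the eigenvalues of $\rho(g)^{-\mathrm{T}}$ are the inverses of those of $\rho(g)$, the dual $G$-action on $L^{\vee}$ is free outside the origin as well, so Theorem~\ref{the:Tate_cohomology} applies to it and yields
\[
\widehat{H}^i\bigl(G;\Lambda^j(L^{\vee})\bigr) = 0 \qquad\text{whenever } i+j \text{ is odd.}
\]
Since $H^i(G;-) = \widehat{H}^i(G;-)$ for $i \geq 1$, this gives $E_2^{i,j} = 0$ for all $i \geq 1$ with $i+j$ odd.

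Now a purely bookkeeping step. A differential $d_r\colon E_r^{i,j} \to E_r^{i+r,j-r+1}$ raises total degree by one, and its target sits in column $i+r \geq 2$; by the vanishing above that target is zero unless $(i+r)+(j-r+1) = i+j+1$ is even, i.e.\ unless $i+j$ is odd, in which case a nonzero source with $i \geq 1$ would have $i+j$ even --- a contradiction. Hence the only possibly nonzero differentials are
\[
d_r\colon E_r^{0,j} \longrightarrow E_r^{r,\,j-r+1}, \qquad j \text{ odd},\ r \geq 2 ,
\]
those emanating from the edge column $i=0$ in odd total degree (no differential ever maps into that column, and the odd columns of $E_2$, which a priori survive, lie in even total degree and therefore support no differentials by the same bookkeeping). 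Equivalently, the whole problem comes down to showing, for every odd $j$, that the edge homomorphism
\[
\operatorname{res}^j\colon H^j(\Gamma;\IZ) \longrightarrow H^j(L;\IZ)^{G} = \Lambda^j(L^{\vee})^{G}
\]
is surjective; for even $j$ it is automatically onto by the previous line. Granting this, $E_{\infty} = E_2$, and the residual extension problems in even total degree are handled by observing that the edge term $E_{\infty}^{0,k} = \Lambda^k(L^{\vee})^{G}$ is free abelian, hence splits off $H^k(\Gamma;\IZ)$, while the complementary torsion part is governed by multiplication with the periodicity class $y \in H^2(G;\IZ) = E_2^{2,0}$ --- which is a permanent cycle because $E_2^{0,1} = \Lambda^1(L^{\vee})^{G} = 0$ (no trivial $G$-summand in $L \otimes \IQ$), and which acts invertibly on $E_2^{i,*}$ for $i \geq 1$ --- together with the explicit value of $H^*(\Gamma;\IZ)$ in degrees $> n$ from Theorem~\ref{the:cohomology_of_Gamma}.

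The heart of the matter is thus the surjectivity of $\operatorname{res}^j$ for odd $j$. One half of it is formal: the transfer $\operatorname{cor}\colon H^*(L;\IZ) \to H^*(\Gamma;\IZ)$ along the index-$m$ subgroup $L$ satisfies $\operatorname{cor}\circ\operatorname{res} = m\cdot\mathrm{id}$ on $H^*(\Gamma;\IZ)$, while $\operatorname{res}\circ\operatorname{cor}$ is the norm $\sum_{g\in G} g^*$ of the $G$-action, which is multiplication by $m$ on $H^*(L;\IZ)^{G}$; hence $\operatorname{res}^j$ is an isomorphism after inverting $m$, its cokernel is finite and annihilated by $m$, and $E_{\infty}^{0,j}$ has full rank in $E_2^{0,j}$. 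To kill this cokernel I would bring in the geometric model $\underline{E}\Gamma = L \otimes_{\IZ} \IR$ with its affine $\Gamma$-action. Since $G \cong \IZ/m$ is cyclic, every finite subgroup of $\Gamma$ is cyclic, and a cyclic subgroup acting on $L \otimes \IR$ (freely outside its fixed locus) has exactly one fixed point; hence the singular set of $L \otimes \IR$ is a \emph{discrete} $\Gamma$-subspace, $\Gamma$-homeomorphic to $\coprod_{(M) \in \calm}\Gamma/M$. Excising small equivariant disks around the singular points exhibits $L \otimes \IR$ as a $\Gamma$-pushout of a free cocompact $\Gamma$-CW-complex $U$ --- whose Borel cohomology is the ordinary cohomology of the compact manifold-with-boundary $U/\Gamma \subseteq T^n/G$ --- with the disk bundles of the isotropy representations; the associated Mayer--Vietoris sequence relates $H^*(\Gamma;\IZ) = H^*_{\Gamma}(\underline{E}\Gamma)$ to $H^*(U/\Gamma;\IZ)$, to $\bigoplus_{(M)} H^*(M;\IZ)$, and to the equivariant cohomologies of the linear $\IZ/|M|$-spheres (generalized lens spaces). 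Feeding the Tate vanishing of Theorem~\ref{the:Tate_cohomology}, together with the elementary computation of those lens-space cohomologies, into this sequence should force $\operatorname{res}$ to hit all of $\Lambda^j(L^{\vee})^{G}$; as a variant one may first reduce to $m$ a prime power by restricting to the subgroups $L \rtimes G_p$ for $G_p$ a Sylow subgroup of $G$, and then to $m=p$ by induction along the index-$p$ subgroups $L \rtimes \IZ/p^{a-1} \leq L \rtimes \IZ/p^{a}$, which still satisfy the freeness hypothesis. This last step --- promoting ``isomorphism after inverting $m$'' to honest integral surjectivity onto the invariant exterior powers in odd degrees --- is the one I expect to be genuinely hard; it is exactly the point at which the freeness hypothesis (equivalently, the $0$-dimensionality of the singular set of $L \otimes \IR$, equivalently the absence of eigenvalue $1$) must be used in an essential, non-formal way, consistent with the fact, also established in the paper, that the conjecture fails without it.
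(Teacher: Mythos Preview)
Your bookkeeping on the $E_2$-page is correct and matches the paper's Lemma~\ref{lem:Theorem_Tate_implies_Theorem_positiv_and_vice_versa}, but you miss the one-line finish to the surjectivity of $\operatorname{res}^j$. You observe that $\operatorname{res}\circ\operatorname{cor}$ is the norm $N=\sum_{g\in G} g^*$, yet you only extract that $N$ restricted to invariants is multiplication by $m$. The decisive point is that the cokernel of $N\colon H^j(L)\to H^j(L)^G$ is, by definition of Tate cohomology in degree zero, exactly $\widehat H^{0}(G;H^j(L))$, and Theorem~\ref{the:Tate_cohomology} (which you have already checked applies to $L^\vee$) makes this vanish for odd $j$. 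Hence $N$, and therefore $\operatorname{res}^j$, surjects onto $(\Lambda^j L^\vee)^G$ integrally and immediately. Your geometric detour through the singular set of $\underline{E}\Gamma$ is unnecessary; the step you flag as ``genuinely hard'' dissolves once you remember that Theorem~\ref{the:Tate_cohomology} covers $i=0$ and not only $i\ge 1$.

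For the full conjecture---including the triviality of the extension problems---the paper's proof of Theorem~\ref{the:positive} is quite different from your sketch. It reduces via Lemma~\ref{lem:reduction_to_prime} to $m=p^r$ and $L\cong\IZ[\zeta]^k$, exhibits an explicit compatible $G$-action on the Koszul resolution (Lemma~\ref{lem:Z[zeta]_has_compatible_action}, produced from the free-group lift in Example~\ref{exa:Syzygies_of_permutation_modules}), and then invokes the Adem--Pan theorem that a compatible action forces collapse in the strongest sense. Your periodicity-and-Theorem~\ref{the:cohomology_of_Gamma} outline for the extensions is not in the paper and, as written, is incomplete: knowing that cup product with the periodicity class is an isomorphism on each $E_\infty^{i,j}$ for $i\ge 1$ does not by itself split the iterated extensions among those pieces, and you do not explain how the high-degree computation of $H^*(\Gamma)$ feeds back to control the filtrations in degrees $\le n$.
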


This conjecture is known to be true if $m$ is squarefree 
(see~\cite[Corollary~4.2]{Adem-Ge-Pan-Petrosyan(2008)}) or if
there exists a so called \emph{compatible group action}
(see~Definition~\ref{def:compatible_group_action} and~\cite[Definition~2.1 and Theorem~2.3]{Adem-Pan(2006)}).

We will prove a positive and a negative result concerning 
Conjecture~\ref{con:Adem-Ge-Pan-Petrosyan}.

 \begin{theorem}[Free actions]\label{the:positive}
  Conjecture~\ref{con:Adem-Ge-Pan-Petrosyan} is true, provided that the 
 $G$-action on $L$ is free outside the origin. 
 \end{theorem}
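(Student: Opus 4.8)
The strategy is to show that the Lyndon–Hochschild–Serre spectral sequence
\[
E_2^{i,j} = H^i(G; H^j(L)) \Longrightarrow H^{i+j}(\Gamma)
\]
already collapses at $E_2$, and that the extension problems are trivial, by a dimension/parity argument anchored in Theorem~\ref{the:Tate_cohomology}. The first step is to identify $H^j(L)$ as a $\IZ G$-module. Since $L \cong \IZ^n$ is free abelian, $BL$ is an $n$-torus and $H^j(L;\IZ) \cong \Lambda^j(\Hom_{\IZ}(L,\IZ)) \cong \Lambda^j(L^{\ast})$ as a $\IZ G$-module, where $L^\ast$ is the dual. Because the $G$-action on $L$ is free outside the origin, the same is true for $L^\ast$ (an element fixing a nonzero functional fixes the kernel hyperplane, hence has eigenvalue $1$, contradicting freeness unless it is the identity — one has to phrase this via the fact that a nontrivial element of a finite group acting freely on $\IZ^n$ has no eigenvalue $1$, which is symmetric under dualization). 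Hence Theorem~\ref{the:Tate_cohomology} applies equally to $\Lambda^j(L^\ast)$, and we conclude that $\widehat H^i(G; H^j(L)) = 0$ whenever $i+j$ is odd; in positive degrees $i>0$ Tate and ordinary cohomology agree, so $E_2^{i,j} = H^i(G;H^j(L)) = 0$ for $i>0$ with $i+j$ odd.

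**Collapse.** The key observation is that every differential $d_r \colon E_r^{i,j} \to E_r^{i+r,j-r+1}$ changes total degree by $1$, hence connects a group of one parity of $i+j$ to the other. Since $E_2^{i,j}$ (for $i>0$) is concentrated in bidegrees with $i+j$ even, the target $E_r^{i+r,j-r+1}$ of any nonzero $d_r$ with $r\geq 2$ has $(i+r)+(j-r+1) = i+j+1$; if the source is nonzero then $i+j$ is even (when $i>0$), so the target has $i+j+1$ odd and $i+r > 0$, forcing it to vanish at $E_2$ and hence at $E_r$. The only remaining subtlety is the edge column $i=0$: there $E_2^{0,j} = H^0(G;H^j(L)) = H^j(L)^G$ need not vanish for $j$ odd, but $d_r$ out of the edge column lands in $E_r^{r, j-r+1}$ with $r \geq 2 > 0$ and total degree $j+1$; running the parity bookkeeping shows that a nonzero such differential would again require a nonzero $E_2$-entry in positive horizontal degree with odd total degree, which is excluded. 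Differentials *into* the edge column come from $E_r^{-r, j+r-1}$, which is zero since the spectral sequence is first-quadrant. So all $d_r$ vanish for $r\geq 2$ and the sequence collapses at $E_2$.

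**Extensions.** It remains to see that the filtration of $H^k(\Gamma)$ splits, i.e.\ $H^k(\Gamma) \cong \bigoplus_{i+j=k} E_\infty^{i,j} = \bigoplus_{i+j=k} E_2^{i,j}$. The clean way is again parity: in total degree $k$, all contributing $E_\infty^{i,j}$ with $i>0$ have $i+j = k$, so they all have the same parity; combined with the edge pieces $E_\infty^{0,k}$ and $E_\infty^{k,0}$ one argues that the successive subquotients of the filtration are, outside at most the two edge terms, concentrated so that no nontrivial extension of abelian groups can occur — more precisely, one can invoke that the restriction map to a maximal finite (cyclic) subgroup, together with the transfer, splits off the higher filtration quotients, exactly as in the proof of Theorem~\ref{the:cohomology_of_Gamma}; alternatively one checks directly that $H^k(\Gamma)$ and $\bigoplus_{i+j=k} H^i(G;H^j(L))$ have the same order in each degree after tensoring with $\IZ/p$ for each prime $p\mid m$, and that the filtration quotients are $p$-groups admitting a compatible splitting. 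The anticipated main obstacle is precisely this last point: vanishing of the differentials is a soft parity argument, but ruling out the extension problems rigorously requires either producing an explicit section (e.g.\ via the maximal finite subgroups, reusing the machinery of Section~\ref{sec:The_cohomology_of_Gamma}) or a careful order-counting argument, and making that fully watertight — especially the interaction of the two edge terms in a given total degree — is where the real work lies.
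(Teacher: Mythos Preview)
Your collapse argument has a genuine gap at the edge column. When $j$ is odd, the differential $d_r\colon E_r^{0,j}\to E_r^{r,\,j-r+1}$ lands in total degree $j+1$, which is \emph{even}, with horizontal degree $r>0$; nothing in Theorem~\ref{the:Tate_cohomology} forces that target to vanish. Your sentence ``running the parity bookkeeping shows that a nonzero such differential would again require a nonzero $E_2$-entry in positive horizontal degree with odd total degree'' is simply false in this case. The paper's Lemma~\ref{lem:Theorem_Tate_implies_Theorem_positiv_and_vice_versa} carries out exactly the checkerboard argument you sketch and isolates precisely this residual case; it is disposed of not by parity but by an edge-homomorphism argument: one shows that $i^*\colon H^r(\Gamma)\to H^r(L)^G$ is surjective for odd $r$ by composing with the transfer $i^!$ and using that $\widehat H^{\,0}(G;H^r(L))=0$ (this vanishing \emph{is} part of Theorem~\ref{the:Tate_cohomology}), so no differential can leave the column $i=0$.

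Even with that repair, you would only recover the content of Lemma~\ref{lem:Theorem_Tate_implies_Theorem_positiv_and_vice_versa}, namely the vanishing of all differentials; the extension problem remains, and your paragraph on it is not an argument. Invoking restriction and transfer to maximal finite subgroups, or order-counting mod $p$, does not by itself split a filtration whose successive quotients in even total degree are finite $m$-torsion groups. The paper does not try to do this. Its proof of Theorem~\ref{the:positive} takes a different route altogether: Lemma~\ref{lem:reduction_to_prime} reduces to $m=p^r$ and $L=\IZ[\zeta]^k$; Lemma~\ref{lem:Z[zeta]_has_compatible_action} produces a compatible $G$-action on the Koszul resolution in that case; and then \cite[Theorem~2.3]{Adem-Pan(2006)} gives the full conjecture (collapse \emph{and} trivial extensions) at once. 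Theorem~\ref{the:Tate_cohomology} is not used as input; the paper in fact remarks that the implication runs the other way.
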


The proof of Theorem~\ref{the:positive} will be given in Section~\ref{sec:Proof_of_Theorem_ref_the:positive}.

\begin{theorem}[Conjecture~\ref{con:Adem-Ge-Pan-Petrosyan} is not true in general]
\label{the:negative}
Consider the special case $n = 6$ and $m = 4$, where $\rho$ is given by the matrix
\[
\left(\begin{matrix} 
0 & 1 & 0 & 0 & 0 & 0
\\
-1 & 0 & 1 & 0 & 0 & 0 
\\
0 & 0 & 1 & 0 & 0 & 0
\\
0 & 0 & 0  & 1 & 0 & 0
\\
0 & 0 & 0  & -1 & 0 & 1
\\
0 & 0 & 0  & 0 & -1 & 0
\end{matrix}
\right)
\]
Then the second differential in the Lyndon-Hochschild-Serre spectral
sequence associated to the semi-direct product $L \rtimes_{\rho} G$ is
non-trivial. In particular Conjecture~\ref{con:Adem-Ge-Pan-Petrosyan}
is not true.
\end{theorem}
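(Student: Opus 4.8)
The plan is to collapse the $\IZ G$-module $L$ to an elementary building block, and then to detect a single non-vanishing second differential by a finite computation.

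\textbf{Step 1: splitting the lattice.} Write $G=\IZ/4=\langle t\rangle$. Inspecting $\rho$, the sublattices $M_1=\langle e_1,e_2,e_3\rangle$ and $M_2=\langle e_4,e_5,e_6\rangle$ are $\IZ G$-submodules and $L=M_1\oplus M_2$ as $\IZ G$-modules. Each $M_i$ fits into a short exact sequence of $\IZ G$-modules
\[
0\longrightarrow \IZ[i]\longrightarrow M_i\longrightarrow \IZ\longrightarrow 0,
\]
where $\IZ$ carries the trivial action and $t$ acts on $\IZ[i]=\IZ\langle 1,i\rangle$ by multiplication with $i$. Since $\operatorname{Ext}^1_{\IZ G}(\IZ,\IZ[i])=H^1(G;\IZ[i])=\IZ[i]\big/(i-1)\IZ[i]\cong\IZ/2$ and each $M_i$ is non-split (any $t$-invariant lift of a generator of $\IZ$ would need a coefficient $\tfrac12$), the two extensions agree and $M_1\cong M_2=:M$. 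Hence $L\cong M\oplus M$, and accordingly $H^*(L;\IZ)=\Lambda^*_{\IZ}(L^*)\cong \Lambda^*(M^*)\otimes_{\IZ}\Lambda^*(M^*)$ as $\IZ G$-differential graded algebras, so that the Lyndon-Hochschild-Serre $E_2$-page is $E_2^{p,q}=\bigoplus_{a+b=q}H^p\bigl(G;\Lambda^aM^*\otimes_\IZ\Lambda^bM^*\bigr)$.

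\textbf{Step 2: where to look.} Because $1\to L\to\Gamma\to G\to1$ is split, the edge homomorphism $H^*(G)\to H^*(\Gamma)$ is split injective, so no differential hits the bottom row; since $d_2$ is a derivation it therefore vanishes on the subalgebra of $E_2$ generated by $E_2^{*,0}=H^*(G)$ and $E_2^{*,1}=H^*(G;L^*)$. The mechanism behind the counterexample is that this subalgebra is \emph{proper}: taking $G$-invariants does not commute with $\otimes_\IZ$. Concretely, $(M^*)^G=\operatorname{Hom}_G(M,\IZ)$ has rank $1$, so the invariant $2$-forms coming from products of invariant $1$-forms span only a rank-$1$ sublattice of $E_2^{0,2}=(\Lambda^2L^*)^G$, whereas $(M^*\otimes_\IZ M^*)^G=\operatorname{Hom}_G(M,M^*)$ alone has rank $3$ (one $\IZ$ from the trivial constituent, a $\IZ[i]$ from the faithful one). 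Thus $E_2^{0,2}$ contains classes $\beta$, built from the cross term $M^*\otimes_\IZ M^*$, on which the formal vanishing says nothing, and the expectation is that $d_2(\beta)\in E_2^{2,1}=H^2(G;L^*)$ is non-zero for a suitable such $\beta$.

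\textbf{Step 3: the computation, and the main obstacle.} To finish I would compute $d_2$ at the cochain level: take a free $\IZ G$-resolution $P_\bullet\to\IZ$ and a $G$-equivariant cochain model for $BL$ whose cohomology is $\Lambda^*(L^*)$, form the resulting first-quadrant double complex --- whose spectral sequence is the Lyndon-Hochschild-Serre one and whose total cohomology is $H^*(\Gamma;\IZ)$ --- lift the invariant $2$-form $\beta$ to a $0$-column cochain, apply the total differential, and verify that the resulting $2$-cocycle of $G$ with values in $L^*$ is not a coboundary. As a cross-check one can instead compute the abelian group $H^3(\Gamma;\IZ)$ independently (for instance from a partial free $\IZ\Gamma$-resolution of $\IZ$ through degree $4$) and compare it with $\bigoplus_{i+j=3}H^i(G;H^j(L;\IZ))$; here total degree $3$ rather than $2$ is the right place, because the only second differential touching total degree $2$ is $d_2\colon E_2^{0,2}\to E_2^{2,1}$, which maps a free group into a finite one and so leaves the abstract isomorphism type of $H^2(\Gamma)$ unchanged, while a non-zero such $d_2$ does force a discrepancy in the torsion of $H^3(\Gamma)$ once one checks that the higher differentials out of total degree $3$ cannot account for it alone. \textbf{The main obstacle} is precisely this verification --- that the explicit class survives as non-zero modulo boundaries, or equivalently that the finite computation of $H^3(\Gamma;\IZ)$ is carried out correctly and the discrepancy is unambiguously attributed to the second differential. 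Everything preceding this point is routine module-theoretic bookkeeping.
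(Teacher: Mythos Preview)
Your Step~1 is correct (and in fact $M_1\cong M_2$ for a structural reason you did not exploit: since $\det(\rho|_{M_1})=1$ and $\operatorname{rk} M_1=3$, one has $\Lambda^2 M_1\dual\cong M_1$; the paper's decomposition is written as $L=X\oplus\Lambda^2 X\dual$ with $X=M_1$). Step~2 contains a correct observation about indecomposable invariants. But Step~3 is not a proof: you have reduced to a finite verification and then stopped, explicitly flagging that verification as the ``main obstacle''. So as it stands the proposal is incomplete.

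The paper avoids your direct cochain computation by bringing in a piece of machinery you do not mention: the Charlap--Vasquez classes $[\alpha_s]\in\Ext^2_{\IZ G}(H^{s+1}(L),H^s(L))$, which describe $d_2$ as cup product (Lemma~\ref{lem:differential_is_product_with_alpha}). The key computation is then not on the rank-$6$ lattice $L$ but on the rank-$3$ block $X=M_1$: one shows $[\alpha_1^X]\neq 0$ by an explicit (and short) calculation on the Koszul resolution of $\IZ$ over $\IZ X$ (Lemma~\ref{lem:main_example}). The passage back to $L$ is the ``direct sum theorem'' (Theorem~\ref{the:direct_sum_theorem}): for $L=X\oplus\Lambda^2 X\dual$ one has, with suitable choices, a commuting square identifying a component of $\alpha_2^L$ with $\alpha_1^X\otimes\id$; hence if $d_2\colon H^*(G;H^3L)\to H^{*+2}(G;H^2L)$ were zero, the class $[\alpha_1^X]$ would vanish. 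Contrapositively, $[\alpha_1^X]\neq 0$ forces $d_2\neq 0$.

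Two further remarks. First, the relevant non-vanishing the paper exhibits is $d_2\colon E_2^{*,3}\to E_2^{*+2,2}$, not $d_2\colon E_2^{0,2}\to E_2^{2,1}$ as you propose; the point is that writing the second summand as $\Lambda^2 X\dual$ (rather than merely ``something isomorphic to $X$'') produces a tautological invariant class $\id_{\Lambda^2 X\dual}\in\hom_\IZ(\Lambda^2 X\dual,\Lambda^2 X\dual)\subset H^3(L)^G$ whose image under $d_2$ is, by construction, $[\alpha_1^X]$. Second, your fallback plan of computing $H^3(\Gamma)$ and comparing orders would require ruling out cancellation by higher differentials and nontrivial extensions, which you acknowledge but do not carry out; the Charlap--Vasquez route sidesteps all of that.
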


Theorem~\ref{the:negative} will be proved in Section~\ref{sec:A_counterexample}
based on the analysis of the cohomology classes $[\alpha_s]$ due to
Charlap-Vasquez~\cite{Charlap-Vasquez(1969)} presented in
Section~\ref{sec:The_cohomology_classes_[alpha_s]}.  These classes can be used to
describe the second differential in the Lyndon-Hochschild-Serre spectral
sequence and are obstructions to the existence of a compatible group action 
the sense of~\cite[Definition~2.1]{Adem-Pan(2006)} (see~Definition~\ref{def:compatible_group_action}).
The next result is an easy consequence
of Theorem~\ref{the:negative} and will be proved also in  Section~\ref{sec:A_counterexample}.

\begin{corollary} \label{cor:m_divisible_by_four}
\begin{enumerate}

\item  \label{cor:m_divisible_by_four:non-trivial}
If $m$ is divisible by four, we can find for $G \cong \IZ/m$ an $L$
such that  the second differential in the Lyndon-Hochschild-Serre spectral
sequence associated to the semi-direct product $L \rtimes_{\rho} G$ is
non-trivial;

\item   \label{cor:m_divisible_by_four:trivial}
If $m$ is not divisible by four, then for all  $L$
the  second differential in the Lyndon-Hochschild-Serre spectral
sequence associated to the semi-direct product $L \rtimes_{\rho} G$ is
trivial.

\end{enumerate}
\end{corollary}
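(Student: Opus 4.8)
The plan is to prove the two parts by different means, using Theorem~\ref{the:negative} directly only for part~\ref{cor:m_divisible_by_four:non-trivial}, and the analysis of the Charlap--Vasquez classes $[\alpha_s]$ from Section~\ref{sec:The_cohomology_classes_[alpha_s]} for part~\ref{cor:m_divisible_by_four:trivial}. Recall from that section that the second differential is determined by the classes $[\alpha_s] \in H^2\bigl(G;\operatorname{Hom}(H^s(L),H^{s-1}(L))\bigr)$; in particular it is non-trivial if and only if some $[\alpha_s]$ is non-zero, these coefficient modules are finitely generated free abelian groups, and the classes $[\alpha_s]$ are annihilated by a power of $2$.

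For part~\ref{cor:m_divisible_by_four:non-trivial} write $m = 4\ell$ and let $q\colon \IZ/m \to \IZ/4$ be the surjection taking a fixed generator to a fixed generator. Let $(L_0,\rho_0)$ be the $\IZ/4$-representation of Theorem~\ref{the:negative} (so $L_0 \cong \IZ^6$), and put $L := L_0$ equipped with the $\IZ/m$-action $\rho := \rho_0 \circ q$. The identity of $L_0$ together with $q$ is a morphism of extensions from $L\rtimes_\rho\IZ/m$ to $L_0\rtimes_{\rho_0}\IZ/4$, hence induces a morphism of Lyndon--Hochschild--Serre spectral sequences which on $E_2$-terms is the identity in the kernel variable $H^s(L_0)$ and inflation $q^*$ in the quotient variable; by naturality of the classes $[\alpha_s]$ under inflation, the class $[\alpha_s]$ for $(L,\rho)$ is $q^*$ of the class $[\alpha_s]$ for $(L_0,\rho_0)$. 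By Theorem~\ref{the:negative} the latter is non-zero for some $s$, so it suffices to show that for a $\IZ/4$-module $M$ which is free as an abelian group, inflation $q^*\colon H^2(\IZ/4;M)\to H^2(\IZ/m;M)$ is injective. Using the $2$-periodic free resolution of $\IZ$ over $\IZ[\IZ/k]$ one identifies $H^2(\IZ/k;M)\cong M^{\IZ/k}/N_kM$ with $N_k = \sum_{i=0}^{k-1}t^i$; since $\rho$ factors through $q$ we have $M^{\IZ/m}=M^{\IZ/4}$ and $N_m = (m/4)\,N_4$ as operators on $M$, so inflation becomes the map $M^{\IZ/4}/N_4M\to M^{\IZ/4}/(m/4)N_4M$, $[a]\mapsto[(m/4)a]$, and its injectivity is exactly the assertion that $(m/4)(a-n)=0$ with $n\in N_4M$ forces $a\in N_4M$, which holds because $M$ is torsion-free. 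Hence $[\alpha_s]$ for $(L,\rho)$ is non-zero, so $d_2$ for $L\rtimes_\rho\IZ/m$ is non-trivial.

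For part~\ref{cor:m_divisible_by_four:trivial}, assume $4\nmid m$ and let $(L,\rho)$ be arbitrary. Let $P\le\IZ/m$ be the Sylow $2$-subgroup, so $|P|\le 2$ and $[\IZ/m:P]$ is odd; then $\operatorname{res}^{\IZ/m}_P$ is injective on $2$-primary torsion, since $\operatorname{cor}^{\IZ/m}_P\circ\operatorname{res}^{\IZ/m}_P$ is multiplication by the odd integer $[\IZ/m:P]$. By naturality, $\operatorname{res}^{\IZ/m}_P[\alpha_s]$ is the class $[\alpha_s]$ of the restricted representation $(L,\rho|_P)$; this vanishes if $P$ is trivial, and if $P\cong\IZ/2$ it vanishes because, $2$ being squarefree, the Lyndon--Hochschild--Serre spectral sequence of $L\rtimes_{\rho|_P}\IZ/2$ collapses by~\cite[Corollary~4.2]{Adem-Ge-Pan-Petrosyan(2008)}. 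As each $[\alpha_s]$ is $2$-torsion, hence lies in the $2$-primary torsion subgroup, the injectivity above forces $[\alpha_s]=0$, so $d_2$ for $L\rtimes_\rho\IZ/m$ is trivial.

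The real content sits in two places: the naturality of $[\alpha_s]$ under inflation together with the injectivity of inflation $H^2(\IZ/4;M)\to H^2(\IZ/m;M)$ for torsion-free $M$ --- this is where the hypothesis $4\mid m$ enters, through the surjection $\IZ/m\twoheadrightarrow\IZ/4$ --- and the fact that the Charlap--Vasquez classes are $2$-torsion, which combined with the squarefree case for $\IZ/2$ and the Sylow argument makes $d_2$ vanish exactly when the Sylow $2$-subgroup of $\IZ/m$ is no larger than $\IZ/2$. I expect the one point requiring care is checking the inflation-naturality of $[\alpha_s]$ at the level of the explicit cocycles used in Section~\ref{sec:The_cohomology_classes_[alpha_s]}, and confirming that the normalization there indeed gives $2[\alpha_s]=0$.
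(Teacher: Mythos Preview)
Your overall strategy for both parts matches the paper's, and your explicit computation of inflation on $H^2$ is a nice alternative to the paper's five-term-exact-sequence argument in Lemma~\ref{lem:general_cyclic}. But there is a genuine gap in the way you pass between the classes $[\alpha_s]$ and the differential $d_2$.

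You assert that $d_2$ is non-trivial if and only if some $[\alpha_s]\neq 0$. One direction is fine: since $d_2$ is cup product with $[\alpha_s]$, if all $[\alpha_s]=0$ then $d_2=0$. The converse, however, fails in general. Take $G=\IZ/4$, $M=N=\IZ[i]$ (the rotation lattice), and let $\alpha$ be multiplication by $1+i$. Then $\widehat H^0(G;M)=0$ and on $\widehat H^1(G;M)\cong\IZ[i]/(1-i)\cong\IZ/2$ multiplication by $1+i$ is zero, so $\widehat H^*(G;\alpha)=0$ and the associated cup-product map vanishes in every degree; yet $\operatorname{End}_{\IZ}(\IZ[i])^G/N_G\operatorname{End}_{\IZ}(\IZ[i])\cong(\IZ/2)^2$ and $[\alpha]=[I+T]\neq 0$ there. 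Consequently, in part~\ref{cor:m_divisible_by_four:non-trivial} the step ``$[\alpha_s]\neq 0$ for $(L,\rho)$ over $\IZ/m$ $\Rightarrow$ $d_2\neq 0$'' is unjustified, and in part~\ref{cor:m_divisible_by_four:trivial} the step ``LHS collapses for $\IZ/2$ $\Rightarrow$ $[\alpha_s]^{\IZ/2}=0$'' is unjustified for the same reason.

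Both are easily repaired without changing your strategy. For~\ref{cor:m_divisible_by_four:non-trivial}, bypass $[\alpha_s]$ and use the morphism of spectral sequences you already set up: $d_2$ commutes with inflation, and your $H^2$ computation shows inflation $H^{r}(\IZ/4;-)\to H^{r}(\IZ/m;-)$ is injective on $\IZ$-free coefficients for all $r\ge 1$ (the same norm argument handles every degree via periodicity). This is precisely what the paper packages in Lemma~\ref{lem:general_cyclic} and combines with Theorem~\ref{the:negative}. For~\ref{cor:m_divisible_by_four:trivial}, run your Sylow argument on $d_2$ itself rather than on $[\alpha_s]$: the image of $d_2$ is $2$-torsion (since $2[\alpha_s]=0$), restriction to the $2$-Sylow $P$ commutes with $d_2$ and is injective on $2$-torsion, and $d_2^P=0$ by the squarefree case---there is no need to know $[\alpha_s]^P=0$. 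The paper is more direct here: it cites the Charlap-Vasquez corollary which already gives $[\alpha_s]=0$ whenever $4\nmid m$, so your route is a hands-on reproof of that fact once the gap is closed.
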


 \begin{remark}[Reformulation of Conjecture~\ref{con:Adem-Ge-Pan-Petrosyan}] \label{rem:discussion_of_conjecture}
  In view of Corollary~\ref{cor:m_divisible_by_four} a very  optimistic guess is that Conjecture~\ref{con:Adem-Ge-Pan-Petrosyan}
  of Adem-Ge-Pan-Petrosyan  is  true if and only if  $m$ is not divisible by four. 
   \end{remark}

%%%%%%%%%%%%%%%%%%%%%%%%%%%%%%%%%%%%%%%%%%%%%%%%%%%%%%%%%%%%%%%%%%%%%%%%%

\subsection{Group cohomology and the equivariant Euler characteristic}
\label{subsec:cohomology_and_equivariant_Eulercharacteristic}

In Section~\ref {sec:Group_cohomology_and_the_equivariant_Eulercharacteristic}
we relate the group cohomology of $\Gamma$ to the $G$-Euler characteristic of
$L\backslash \underline{E}\Gamma$, where $\underline{E}\Gamma$ is the
classifying space for proper actions.

 %%%%%%%%%%%%%%%%%%%%%%%%%%%%%%%%%%%%%%%%%%%%%%%%%%%%%%%%%%%%%%%%%%%%%%%% 
\subsection*{Notations and conventions}
All our modules will be left modules. Some of our results hold in more general
situations than considered in the introduction; in such cases we will use the
letter $K$ for arbitrary finite groups, whereas $G$ is used for cyclic groups
only.

Given a chain complex $P_*$ of modules over $\IZ L$, we denote by $P_*[n]$ the
shifted chain complex given by $\bigl(P_*[n]\bigr)_i = P_{n+i}$ with
differential $\partial_{P[n]} = (-1)^n \partial_P$. A \emph{map} of two chain
complexes $f:P_*\to Q_*$ is an element of
\[ \hom^*(P,Q) = \prod_{i\in \IZ} \hom_{\IZ L} (P_{*+i},P_*), \] 
and for such an $f$ we write $df = \partial_Q
f - (-1)^n f \partial_P$. With this notation, $f$ is a \emph{chain map} if and
only if $df=0$.

Suppose we are given a group homomorphism $\rho:K\to \aut_{\IZ}(L)$; it puts the
structure of a $\IZ K$-module on $L$. For every $k\in K$, we write $\rho^k$ for
the associated automorphism of $L$, and we define $\tau^k=\IZ \rho^k$ to be the
corresponding ring automorphism of $\IZ L$. Whenever $P$ is a $\IZ L$-module, we
denote by $P^k$ the $\IZ L$-module obtained from $P$ by restricting scalars with
the ring automorphism $\tau^k$. This construction extends in an obvious way to
chain complexes of $\IZ L$-modules, leaving the differentials unchanged.

In the special case $K=G=\IZ / m\IZ$, we fix a generator $t$ of $G$ and write
$\rho = \rho^t$ and $\tau=\tau^t$ for short.

%%%%%%%%%%%%%%%%%%%%%%%%%%%%%%%%%%%%%%%%%%%%%%%%%%%%%%%%%%%%%%%%%%%%%%%%

\subsection*{Acknowledgements}
\label{subsec:Acknowledgements}

The work was financially supported by the HCM (Hausdorff Center
for Mathematics) in Bonn, and the Leibniz-Award of the second author.

%%%%%%%%%%%%%%%%%%%%%%%%%%%%%%%%%%%%%%%%%%%%%%%%%%%%%%%%%%%%%%%%%%%%%%%%%
%%%%%%%%%%%%%%% Section 1:  Proof of Theorem~\ref{the:Tate_cohomology} %%%%%%%%%%%
%%%%%%%%%%%%%%%%%%%%%%%%%%%%%%%%%%%%%%%%%%%%%%%%%%%%%%%%%%%%%%%%%%%%%%%%%

\typeout{--------  Section 1:  Proof of Theorem~\ref{the:Tate_cohomology}--------------}

\section{Proof of Theorem~\ref{the:Tate_cohomology}}
\label{sec:Proof_of_Theorem_the:Tate_cohomology}

This section is devoted to the proof of Theorem~\ref{the:Tate_cohomology}.
Its proof needs some preparation. 

\begin{lemma}\label{lem:dvr}
  Let $p$ be a prime and let $r$ be a natural number. Put $\zeta = \exp(2\pi i/p^r)$. 
  Then the ring $\IZ_{(p)}[\zeta]\cong \IZ[\zeta]_{(1-\zeta)}$ is a discrete valuation ring.
\end{lemma}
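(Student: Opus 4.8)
The plan is to show that $\IZ_{(p)}[\zeta]$ is a one-dimensional local domain that is integrally closed, hence a DVR; equivalently, I would exhibit it as a localization of the Dedekind domain $\IZ[\zeta]$ at a single nonzero prime.

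\begin{proof}[Proof sketch]
Recall that $\IZ[\zeta]$ is the ring of integers in the cyclotomic field $\IQ(\zeta)$, where $\zeta = \exp(2\pi i/p^r)$; in particular it is a Dedekind domain. The key classical fact is that the rational prime $p$ is totally ramified in $\IZ[\zeta]$: one has the factorization of ideals $(p) = (1-\zeta)^{\varphi(p^r)}$, and the single prime $\mathfrak{p} = (1-\zeta)$ lying over $p$ satisfies $\IZ[\zeta]/\mathfrak{p} \cong \IF_p$. This is standard; it follows from computing the norm of $1-\zeta$ (which equals $\pm p$) together with the evaluation of the $p^r$-th cyclotomic polynomial at $1$.

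Now localize $\IZ[\zeta]$ at the multiplicative set $S = \IZ \setminus (p)$, i.e., invert all integers prime to $p$. The resulting ring $S^{-1}\IZ[\zeta] = \IZ_{(p)}[\zeta]$ is again a Dedekind domain (a localization of a Dedekind domain is Dedekind), and its nonzero prime ideals are exactly those primes of $\IZ[\zeta]$ disjoint from $S$, namely the primes containing $p$. Since $\mathfrak{p} = (1-\zeta)$ is the unique such prime, $\IZ_{(p)}[\zeta]$ has a unique nonzero prime ideal; a Dedekind domain with a unique nonzero prime ideal is precisely a discrete valuation ring (it is Noetherian, integrally closed, and one-dimensional local). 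Moreover, localizing further at $\mathfrak{p}$ changes nothing, so $\IZ_{(p)}[\zeta] = \IZ[\zeta]_{\mathfrak{p}} = \IZ[\zeta]_{(1-\zeta)}$, which also gives the claimed identification of the two descriptions of the ring.

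The only mildly delicate point is justifying the identification $S^{-1}\IZ[\zeta] = \IZ_{(p)}[\zeta]$ and that $(1-\zeta)$ is the unique prime over $p$; both reduce to the total ramification statement $(p) = (1-\zeta)^{\varphi(p^r)}$, which is the main input and is entirely classical (see e.g.\ Washington, \emph{Cyclotomic Fields}). Everything else is the general principle that localizations of Dedekind domains at a single prime are DVRs.
\end{proof}
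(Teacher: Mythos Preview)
Your argument is correct and follows essentially the same approach as the paper: both use that $\IZ[\zeta]$ is a Dedekind domain in which $p$ is totally ramified with unique prime $(1-\zeta)$ above it, and then identify $\IZ_{(p)}[\zeta]$ with the localization $\IZ[\zeta]_{(1-\zeta)}$. The only difference is in packaging: the paper verifies the isomorphism $K^{-1}\IZ[\zeta]\cong\IZ[\zeta]_{(1-\zeta)}$ by an explicit injectivity/surjectivity argument inside $\IQ[\zeta]$, whereas you deduce it from the general fact that the primes surviving in $S^{-1}\IZ[\zeta]$ are exactly those meeting $S$ trivially, which forces the localized ring to be local with maximal ideal $(1-\zeta)$.
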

\begin{proof}
  Recall from \cite[Lemma~10.1 in Chapter~I on page~59]{Neukirch(1999)}  that the ideal
  $(1-\zeta)\IZ[\zeta]$ is a prime ideal in $\IZ[\zeta]$, and that
  $(1-\zeta)^{(p-1)p^r}=p\, \varepsilon$ for some unit
  $\varepsilon\in\IZ[\zeta]$. 
  Since $\IZ[\zeta]$ is the ring of integers in the algebraic number field $\IQ[\zeta]$,
  it is a Dedekind domain (see~\cite[Theorem~3.1 in Chapter~I on page 17 and 
  Proposition~10.2 in Chapter~I on page~60]{Neukirch(1999)}). 
  Since the localization of a Dedekind ring at one of its prime ideals is a discrete
  valuation ring (see~\cite[Theorem~9.3 on page~95]{Atiyah-McDonald(1969)}),
  it is enough to prove the isomorphism of rings $\IZ_{(p)}[\zeta]\cong \IZ[\zeta]_{(1-\zeta)}$.

  Let $K$ be the set of positive integers not divisible by $p$, and observe that
  $\IZ_{(p)}[\zeta] = K^{-1}\IZ[\zeta]$.  Under the unique ring map
  $\IZ[\zeta]\to  \IZ/p\IZ$ mapping $\zeta$ to $1$, elements of
  $K$ map to non-zero elements, and elements of $(1-\zeta)\IZ[\zeta]$ map to
  $0$; therefore, $K\cap (1-\zeta)\IZ[\zeta] = \emptyset$, so the injective map
  $\IZ[\zeta]\to  \IZ[\zeta]_{(1-\zeta)}$ induces an injective map
  $K^{-1}\IZ[\zeta]\to  \IZ[\zeta]_{(1-\zeta)}$. We want to show that
  this map is surjective.

  Consider both sides as subrings of $\IQ[\zeta]$, and let $a,b\in\IZ[\zeta]$
  with $b\notin (1-\zeta)\IZ[\zeta]$; we want to show that $\frac{a}{b}$ is in
  the image of that map. Since $\frac{a}{b}\in\IQ[\zeta]$, there is some
  positive integer $l$ with $w:=l\cdot\frac{a}{b}\in\IZ[\zeta]$. Let us write
  $l=k\cdot p^i$ with $k\in K$; then
  \[ w\cdot b = k\cdot p^i\cdot a = k\cdot (1-\zeta)^{j} \cdot a \cdot e \] for
  some integer $j\geq 0$ and some unit $e\in\IZ[\zeta]$. Since $(1-\zeta)$
  generates a prime ideal which does not contain $b$, we conclude that
  $w=(1-\zeta)^jw'$ for some $w'\in\IZ[\zeta]$ and therefore $k\cdot
  \frac{a}{b}=w' e^{-1}$, which lies in $\IZ[\zeta]$.
\end{proof}

Next  we prove the following reduction.

\begin{lemma} \label{lem:reduction_to_prime}
It suffices to prove  Theorem~\ref{the:Tate_cohomology} and Theorem~\ref{the:positive} 
in the special case, where $m = p^r$ 
for some prime number $p$ and natural number $r$ and $L = \IZ(\zeta)^k = \bigoplus_{i=1}^k \IZ(\zeta)$ 
for some natural number $k$, where $\zeta = \exp(2\pi i/r^k)$.
\end{lemma}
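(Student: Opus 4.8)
The plan is to reduce the general case to the prime-power case in two independent steps. The first step is a \emph{primary decomposition} of the coefficient module. Write $m = \prod_{l} p_l^{r_l}$ for the prime factorization. Tate cohomology of the finite cyclic group $G = \IZ/m$ with coefficients in a finitely generated $\IZ G$-module $L$ decomposes as a direct sum $\widehat{H}^i(G;L) \cong \bigoplus_l \widehat{H}^i(G; L_{(p_l)})$ after localizing, since $\widehat{H}^*(G;L)$ is annihilated by $|G| = m$ and hence is a finite abelian group whose $p_l$-primary part is computed from the $p_l$-localization of everything in sight; equivalently, $\widehat{H}^i(G;L) \cong \bigoplus_l \widehat{H}^i(G_{p_l}; L)$ where $G_{p_l} \le G$ is the Sylow $p_l$-subgroup, by the standard transfer/restriction argument. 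Similarly the Lyndon--Hochschild--Serre spectral sequence for $L \rtimes_\rho G$ breaks up $p$-primarily: after localizing at a prime $p$, only the Sylow $p$-subgroup $G_p$ of $G$ contributes, so the spectral sequence for $L \rtimes_\rho G$ localized at $p$ is (a summand of, and in the relevant range agrees with) the one for $L \rtimes_\rho G_p$. Hence both statements for $\IZ/m$ follow from the corresponding statements for the prime-power groups $\IZ/p^r$. Note that freeness of the action outside the origin is inherited by subgroups, so the hypothesis is preserved.

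The second step handles the module $L$. Fix $m = p^r$. Since the $G$-action is free outside the origin, the $\IQ G$-module $L \otimes_\IZ \IQ$ contains no copy of a proper sub-representation fixed by a nontrivial element; concretely, an element $g \in G$ of order $p^s$ acts on $L$ with $L^g = 0$, which forces the characteristic polynomial of $\rho$ to be a product of cyclotomic polynomials $\Phi_{p^j}$ with $j \ge 1$, and in fact — using that \emph{every} nontrivial element acts freely — one shows $L \otimes \IQ$ is a sum of copies of $\IQ(\zeta_{p^r})$, the faithful irreducible. The integral statement is the real content: one wants a $\IZ G$-lattice $M$ containing $L$ with finite index prime to... — but actually finite index is not enough, one wants to \emph{replace} $L$ by $\IZ(\zeta)^k$ up to something invisible to Tate cohomology. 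The cleanest route: by a theorem on integral representations of cyclic $p$-groups (or directly, since $\IZ(\zeta) = \IZ[\zeta_{p^r}]$ is the maximal order in its own rational span and $L\otimes\IQ \cong \IQ(\zeta)^k$), there is a $\IZ G$-module map $f \colon L \to \IZ(\zeta)^k$ which is a rational isomorphism, i.e.\ has finite kernel and cokernel. A finite $\IZ G$-module has finite (hence $p$-power-order) Tate cohomology, and one argues that the mapping cone / associated long exact sequences let one transport the vanishing statement $\widehat H^i(G;\Lambda^j(-)) = 0$ for $i+j$ odd across $f$ — here one must be careful that exterior powers do not commute with $f$ on the nose, so one works with the induced filtration on $\Lambda^j(L\otimes\IQ)$ and compares $\Lambda^j L$ with $\Lambda^j(\IZ(\zeta)^k)$ as $\IZ G$-lattices of the same rational type, their difference being a finite module. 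For the spectral sequence statement (Theorem~\ref{the:positive}), replacing $L$ by a commensurable lattice $\IZ(\zeta)^k$ changes $\Gamma$ to a commensurable group; one checks the comparison map of LHS spectral sequences is an isomorphism on $E_2$ after inverting nothing in the degrees that matter, or more robustly that collapse is detected rationally together with the already-established $p$-primary finiteness control.

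The main obstacle is precisely this passage from the rational/representation-theoretic picture to the integral one: exterior powers are not exact functors, so a finite-index inclusion $L \hookrightarrow \IZ(\zeta)^k$ does not directly give finite-index inclusions $\Lambda^j L \hookrightarrow \Lambda^j(\IZ(\zeta)^k)$ with control on the quotients, and one must verify that whatever finite $\IZ G$-modules appear as kernels/cokernels are harmless for Tate cohomology in the parity statement and for the collapse statement. I expect the argument to hinge on: (a) $\widehat H^*(G; F)$ is finite for $F$ finite, so it cannot obstruct a \emph{rational} vanishing; but the parity vanishing is integral, not rational, so one actually needs that the relevant finite modules have Tate cohomology concentrated appropriately — this is where one uses that $L$ and $\IZ(\zeta)^k$ are lattices of the \emph{same} rational type, making the comparison an equivalence after localizing and allows an induction on the index. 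Once the index is reduced to $1$ we are done; reducing it uses the structure of $\IZ[\zeta_{p^r}]$-lattices, which over the Dedekind-on-the-nose-after-localization ring $\IZ_{(p)}[\zeta]$ of Lemma~\ref{lem:dvr} are free, so after localizing at $p$ the lattice $L_{(p)}$ is already a sum of copies of $\IZ_{(p)}[\zeta]$ — and since Tate cohomology and the spectral sequence are controlled $p$-locally by the first step, this is enough.
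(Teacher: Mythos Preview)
Your final paragraph lands on exactly the paper's argument, but the route there is unnecessarily circuitous and one key step is left implicit.

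The paper proceeds directly: once $m = p^r$, one observes that $T = 1 + t^{p^{r-1}} + \dots + t^{(p-1)p^{r-1}}$ annihilates $L$, because $T\cdot x$ is fixed by $t^{p^{r-1}}$ and the action is free outside the origin. Hence $L$ is a module over $\IZ G / T\cdot\IZ G \cong \IZ[\zeta]$. This is the step you gesture at via the rational decomposition $L\otimes\IQ \cong \IQ(\zeta)^k$ but never state cleanly, and it is what makes the localization work: $L$ is a finitely generated torsion-free module over the Dedekind domain $\IZ[\zeta]$, hence a direct sum of ideals, hence $L_{(p)}$ is free over the DVR $\IZ_{(p)}[\zeta]$ of Lemma~\ref{lem:dvr}. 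Crucially, this yields an \emph{isomorphism} $L_{(p)} \cong (\IZ_{(p)}[\zeta])^k$ of $\IZ_{(p)} G$-modules, so $\Lambda^j(L)_{(p)} \cong \Lambda^j(\IZ[\zeta]^k)_{(p)}$ on the nose, and the chain $\widehat H^*(G,\Lambda^j L) \cong \widehat H^*(G,\Lambda^j(L)_{(p)}) \cong \widehat H^*(G,\Lambda^j(\IZ[\zeta]^k))$ finishes the reduction.

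Your detour through a rational isomorphism $f\colon L \to \IZ[\zeta]^k$ with finite cokernel is a wrong turn, as you yourself recognize: exterior powers are not exact, and a finite $\IZ G$-module can have nonvanishing Tate cohomology in either parity, so there is no hope of pushing the parity statement across such an $f$. The fix is not to control those finite pieces but to bypass them entirely by working $p$-locally from the start --- which is what you eventually do, but you should lead with it.

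One minor correction on the Sylow step: the precise statement is $\widehat H^i(G;M)_{(p)} \cong \widehat H^i(G_p;M)^{Q}$ with $Q = G/G_p$, not a bare restriction isomorphism. For proving vanishing this distinction is harmless, but your displayed formula $\widehat H^i(G;M) \cong \bigoplus_l \widehat H^i(G_{p_l};M)$ is false as stated.
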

\begin{proof}
  Fix a prime $p$. Let $G_p$ be the $p$-Sylow subgroup of
  $G$. Obviously $G_p$ is a cyclic group of order $p^r$ for some natural
  number $r$. Let $Q$ be the quotient $(\IZ/n)/(\IZ/p)$. Obviously $Q$
  is a cyclic group of order prime to $p$, namely of order
  $m/p^r$. Hence we obtain an isomorphism
$$\widehat{H}^i(G;\Lambda^j(L))_{(p)} =  \widehat{H}^i(G_p;\Lambda^j(L))^Q.$$
This is proved at least for cohomology and $i \ge 1$
in~\cite[Theorem~10.3 on page~84]{Brown(1982)} and extends to Tate
cohomology.  Since an abelian group $A$ is trivial if and only if
$A_{(p)}$ is trivial for all primes $p$, it suffices to prove
Theorem~\ref{the:Tate_cohomology} for $G_p$ for all primes $p$. In
other words, we can assume without loss of generality $m = p^r$.

Let $t \in G$ be a generator.  Let
$T=1+t^{p^{r-1}}+t^{2p^{r-1}}+\dots+t^{(p-1)p^{r-1}}\in\IZ G$. Then
$t^{p^{r-1}}$ fixes $T\cdot x$ for each $x\in L$, so by assumption
$T\cdot x=0\in L$.  Therefore, $T\cdot L=0$, and $L$ is a 
$\IZ G /T\cdot \IZ G$-module. Now the ring epimorphism
$\pr \colon \IZ G \to \IZ[\zeta]$ sending a fixed generator $t$ of $G$ to 
$\zeta = \exp(2\pi i/p^r)$ is surjective and  contains $T$ in its kernel. Since $\IZ G / T\cdot \IZ G$
and $\IZ[\zeta]$ are finitely generated free abelian groups of the same rank,
$\pr$ induces a ring  isomorphism
\begin{eqnarray}
&\overline{\pr} \colon \IZ G /T\cdot \IZ G \xrightarrow{\cong} \IZ[\zeta].&
\label{ZG/T_and_Z[zeta]}
\end{eqnarray}
We have seen before that $\IZ[\zeta]$ is a Dedekind
domain. Every finitely generated torsion-free module over a Dedekind domain is a
direct sum of ideals 
(see~\cite[Lemma~1.5 on page~10 and remark on page~11]{Milnor(1971)}), 
so $L \cong I_1\oplus\dots\oplus I_k$ for
some ideals $I_j\subseteq \IZ[\zeta]$. Now $I_j\otimes \IZ_{(p)}$ is
an ideal in $\IZ_{(p)}[\zeta]$ which is a discrete valuation ring (see
Lemma~\ref{lem:dvr}). Since a discrete valuation ring is a principal
ideal domain with a unique maximal ideal 
(see~\cite[Proposition~9.2 on page~94]{Atiyah-McDonald(1969)}), 
$L_{(p)} \cong \bigl(\IZ_{(p)}[\zeta]\bigr)^{k}$ as modules over $\IZ_{(p)}[\zeta]$.
This implies that $\Lambda_{\IZ_{(p)}}^j (L_{(p)})$ and
$\Lambda^j_{\IZ_{(p)}}((\IZ[\zeta]_{(p)})^k)$ are isomorphic as 
$\IZ G$-modules. For any $\IZ G$-module $M$, the map $M\to  M_{(p)}$
induces an isomorphism
$\widehat{H}^*(G,M)=\widehat{H}^*(G,M)_{(p)}\cong
\widehat{H}^*(G,M_{(p)})$. Therefore,
\begin{multline*}
  \widehat{H}^*(G,\Lambda^j(L)) 
  \cong 
   \widehat{H}^*\bigl(G,(\Lambda^j (L)_{(p)}\bigr) 
   \cong 
    \widehat{H}^*\bigl(G,\Lambda_{\IZ_{(p)}}^j (L_{(p)})\bigr)  
  \\
   \cong
   \widehat{H}^*\bigl(G,\Lambda^j_{\IZ_{(p)}}((\IZ[\zeta]_{(p)})^k)\bigr)
    \cong 
   \widehat{H}^*\bigl(G,\Lambda^j(\IZ[\zeta]^k)_{(p)}\bigr) 
  \cong
   \widehat{H}^*\bigl(G,\Lambda^j(\IZ[\zeta]^k)\bigr).
 \end{multline*}
 Hence it suffices to prove Theorem~\ref{the:Tate_cohomology} 
 in the case $m = p^r$ and $L = \IZ[\zeta]^k$.

 The argument in the proof applies also to Theorem~\ref{the:positive}.
\end{proof}

Recall that a permutation $\IZ G$-module is a $\IZ G$-module of the
shape $\IZ [S]$ for some finite $G$-set $S$.
The main technical input in the proof of Theorem~\ref{the:Tate_cohomology} will be:

\begin{proposition}\label{pro:specific_resolutions}
  Suppose $m = p^r$ for some prime number $p$ and natural number $r$.
  For $j \ge 0$ there is a long exact sequence of $\IZ G$-modules
  \[ 
  0 \to  P\to  F_1\to \dots\to  F_{j}
  \to  \Lambda^{j} \IZ[\zeta] \to  0 \] 
  where $P$ is a permutation $\IZ G$-module and the $F_i$'s
  are free $\IZ G$-module 
\end{proposition}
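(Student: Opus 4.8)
The plan is to induct on $j$, constructing the resolution of $\Lambda^j\IZ[\zeta]$ from that of $\Lambda^{j-1}\IZ[\zeta]$ by splicing in a free resolution of a suitable kernel. The base case $j=0$ is trivial: $\Lambda^0\IZ[\zeta]=\IZ$ with trivial $G$-action, and $\IZ=\IZ[G/G]$ is itself a permutation module, so the "resolution" is just $0\to\IZ\to\IZ\to 0$ (or one takes $P=\IZ[G/G]$, no $F_i$'s). For $j=1$ we need $0\to P\to F_1\to \IZ[\zeta]\to 0$ with $P$ a permutation module and $F_1$ free. Here I would exploit the isomorphism $\IZ[\zeta]\cong \IZ G/T\cdot\IZ G$ from \eqref{ZG/T_and_Z[zeta]}: the short exact sequence $0\to T\cdot\IZ G\to \IZ G\to \IZ[\zeta]\to 0$ has middle term free, so it remains to identify $T\cdot\IZ G$ as a permutation module. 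Since $T\cdot\IZ G$ is the image of multiplication by $T=\sum_{i=0}^{p-1}t^{ip^{r-1}}$, and this element is (up to units, by the norm/transfer description) the induction of the trivial module from the index-$p$ subgroup, one checks $T\cdot\IZ G\cong \IZ[G/G_{p^{r-1}}]$ where $G_{p^{r-1}}$ is the unique subgroup of index $p$ — a permutation module. This handles $j=1$.

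For the inductive step, suppose we have $0\to P\to F_1\to\cdots\to F_{j-1}\to\Lambda^{j-1}\IZ[\zeta]\to 0$. The key algebraic fact I would use is a Koszul-type exact sequence relating exterior powers: for a short exact sequence of $\IZ$-flat modules, or more concretely using the presentation of $\IZ[\zeta]$, there is an exact sequence of $\IZ G$-modules
\[
\IZ[\zeta]\otimes_{\IZ}\Lambda^{j-1}\IZ[\zeta]\to \Lambda^{j}\IZ[\zeta]\to 0,
\]
and more usefully I would build a free (as $\IZ G$-module) resolution by observing that $\Lambda^j$ of a rank-one-over-$\IZ[\zeta]$ module can be computed via the augmentation sequence $0\to\ker\to\IZ G\otimes\Lambda^{j-1}\IZ[\zeta]\to\Lambda^j\IZ[\zeta]\to 0$ after tensoring the $j=1$ sequence appropriately. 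The cleaner route: tensor the length-one resolution $0\to \IZ[G/G_{p^{r-1}}]\to \IZ G\to \IZ[\zeta]\to 0$ with $\Lambda^{j-1}\IZ[\zeta]$ over $\IZ$. Since $\IZ[\zeta]$ is $\IZ$-free, this stays exact, giving
\[
0\to \IZ[G/G_{p^{r-1}}]\otimes_\IZ\Lambda^{j-1}\IZ[\zeta]\to \IZ G\otimes_\IZ\Lambda^{j-1}\IZ[\zeta]\to \IZ[\zeta]\otimes_\IZ\Lambda^{j-1}\IZ[\zeta]\to 0.
\]
The middle term is free over $\IZ G$ (induced module tensored with anything is free/induced), and I would then use the canonical surjection $\IZ[\zeta]\otimes_\IZ\Lambda^{j-1}\IZ[\zeta]\twoheadrightarrow\Lambda^j\IZ[\zeta]$ whose kernel is $\Lambda^{j}$ built from the relations, splicing with the resolution of $\Lambda^{j-1}$ twisted by $\IZ[\zeta]$; note $\IZ G\otimes_\IZ N$ is free for any $N$, and $\IZ[G/H]\otimes_\IZ N$ is a permutation module only when $N$ is — so I must be careful that the leftmost surviving term is genuinely a permutation module.

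The main obstacle I anticipate is precisely this bookkeeping: ensuring that after all the splicing and tensoring, the single module $P$ at the far left is a permutation module rather than merely a direct summand of one, and that all intermediate $F_i$ are free (not just projective) over $\IZ G$. The tensor $\IZ[\zeta]\otimes_\IZ\IZ[\zeta]$ is not obviously a permutation module, so I expect the right inductive hypothesis must be strengthened — perhaps carrying along the statement that $\Lambda^j\IZ[\zeta]$ admits such a resolution where $P$ is specifically $\IZ[G/G_{p^{r-1}}]$ up to free summands, or tracking the resolution of $\IZ[\zeta]^{\otimes\bullet}$ simultaneously. An alternative, possibly cleaner, is to use that over $\IZ_{(p)}$ one has the explicit structure $\IZ_{(p)}[\zeta]$ a DVR (Lemma~\ref{lem:dvr}) and $\IZ G_{(p)}\cong \IZ_{(p)}[\zeta]\times \IZ_{(p)}[G/G_{p^{r-1}}]$-type decompositions of the group ring to split off the permutation part; but since the statement is integral, I would prefer the direct splicing argument and absorb the subtlety into a careful choice of the inductive claim. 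Once $P$ is pinned down as a permutation module, exactness and freeness of the $F_i$ follow formally from $\IZ$-flatness of all exterior powers of $\IZ[\zeta]$.
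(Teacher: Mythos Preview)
Your inductive-splicing strategy has a genuine gap that you correctly flag but do not close: the leftmost term produced by tensoring is \emph{not} a permutation module. Concretely, with $H=\langle t^{p^{r-1}}\rangle$ the order-$p$ subgroup (so $T\cdot\IZ G\cong\IZ[G/H]$; note $H$ has index $p^{r-1}$, not $p$), one has $\IZ[G/H]\otimes_{\IZ}\Lambda^{j-1}\IZ[\zeta]\cong \operatorname{Ind}_H^G\operatorname{Res}_H^G\Lambda^{j-1}\IZ[\zeta]$, and since $t^{p^{r-1}}$ acts on $\IZ[\zeta]$ by multiplication by a primitive $p$-th root of unity rather than by permuting a basis, $\operatorname{Res}_H^G\Lambda^{j-1}\IZ[\zeta]$ is not a permutation $\IZ H$-module for $j\ge 2$. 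There is also no clean description of the kernel of $\IZ[\zeta]\otimes_{\IZ}\Lambda^{j-1}\IZ[\zeta]\twoheadrightarrow\Lambda^{j}\IZ[\zeta]$ that would feed back into an induction on $j$; it is much larger than $\Lambda^{j-2}\IZ[\zeta]$, so the two-term recursion you sketch does not assemble into the claimed resolution.

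The paper's argument avoids induction on $j$ altogether. From $0\to A\to\IZ G\to\IZ[\zeta]\to 0$ with $A=\IZ G/(1-t^{p^{r-1}})\IZ G$ it writes down the entire resolution in one stroke as the Schur complex
\[
0\to\Gamma^{j}A\to\Gamma^{j-1}A\otimes\Lambda^{1}\IZ G\to\cdots\to\Lambda^{j}\IZ G\to\Lambda^{j}\IZ[\zeta]\to 0,
\]
whose leftmost term $\Gamma^{j}A$ (divided powers of a permutation module) is already a permutation module. The remaining problem---that the middle terms $\Gamma^{j-i}A\otimes\Lambda^{i}\IZ G$ are not free---is solved by a combinatorial grading: one grades basis elements by functions $\IZ/p^{r-1}\to\IN$ modulo cyclic shift, so that the Schur complex splits as a direct sum over graded pieces, and then checks (this is Lemma~\ref{lem:gradings}) that each piece either has all middle terms free over $\IZ G$ or contributes nothing to $\Lambda^{j}\IZ[\zeta]$. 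Discarding the latter pieces yields the desired sequence. This grading trick is the missing idea.
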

\begin{proof}
Define the $\IZ G$-module $A=\IZ G / (1-t^{p^{r-1}}) \cdot \IZ G$. Note that we obtain 
from~\eqref{ZG/T_and_Z[zeta]} the short exact sequence of $\IZ G$-modules
\[ 0 \to  A \to  \IZ G \to  \IZ[\zeta] \to  0 \]
whose underlying sequence of free $\IZ$-modules splits. We therefore get a long
exact sequence of $\IZ$-modules
\begin{multline}\label{schurcomplex}
  0 \to  \Gamma^j A \to  \Gamma^{j-1} A \otimes \Lambda^1 \IZ G 
\to  \Gamma^{j-2} A\otimes \Lambda^2 \IZ G \to  \dots \\
  \dots\to  \Gamma^{1} A \otimes \Lambda^{j-1} \IZ G \to 
  \Lambda^j \IZ G \to  \Lambda^j \IZ[\zeta] \to  0.
\end{multline}
(see~\cite[Definition~V.1.6 and Corollary~V.1.15]{Akin-Buchsbaum-Weyman(1982)}), 
which is in fact a sequence of $\IZ G$-modules. 
Here, $\Gamma^j A$ denotes the $j$-th divided powers on $A$ (see, e.g., \cite[I.4]{Akin-Buchsbaum-Weyman(1982)}). Our aim
is to write this sequence as a direct sum of several sequences, each of which
has one of the following properties:
\begin{itemize}
\item it either does not contribute to $\Lambda^j \IZ[\zeta]$, or
\item all its middle terms are free $\IZ G$-modules.
\end{itemize}
For this we introduce a grading as follows. Define a $\IZ$-basis of $A$ by
taking $\mathcal{A}=\{[1],[t],[t^2],\dots,[t^{p^{r-1}-1}]\}$. Let $S'$ be the
additive semi-group of functions (i.e., maps of sets) $\IZ/p^{r-1}\IZ\to 
\mathbb{N}$, where $\mathbb{N}$ is the set of non-negative integers. There is a
unique way of turning $\Gamma^* A$ into an $S'$-graded ring such that the degree
of $[t^i]\in\Gamma^1 A$ is the function in $S'$ sending $[i]$ to $1$ and all
other elements to $0$. Explicitly, the degree of the $\IZ$-basis element
$a_1^{[e_1]}\dots a_m^{[e_m]}$ (with $a_i\in \mathcal{A}$ for all $i$, $e_i\geq
0$) is given by the function
\[ \IZ/p^{r-1}\IZ \ni [i] \mapsto \sum_{a_s=[t^i]} e_s. \] Similarly, there is a
unique $S'$-graded ring structure on $\Lambda^* \IZ G$ such that the degree of
$t^i\in\Lambda^1 \IZ G$ is the function in $S'$ sending $[\text{$i$ mod
  $p^{r-1}$}]$ to $1$ and all other elements to $0$. We therefore get an induced
$S'$-grading on $\Gamma^* A\otimes \Lambda^* \IZ G$ (by requiring $|a\otimes
b|=|a|+|b|$ for homogeneous $a$, $b$), which restricts to an $S'$-grading on
$\Gamma^{j-i} A\otimes \Lambda^{i} \IZ G$.

We claim that the differential of the exact sequence~\eqref{schurcomplex}
respects this grading. To verify this, note that a $\IZ$-basis element
$a_1^{[e_1]}\dots a_m^{[e_m]}\otimes b_1\wedge \dots\wedge b_i$ is mapped to
$a_1^{[e_1-1]}\dots a_m^{[e_m]}\otimes (T\cdot a_1)\wedge b_1\wedge\dots\wedge
b_i$ plus other terms of similar shape. Note here that $T\cdot a_1$ is a
well-defined element in $\IZ G$, and its a sum of elements having the same
degree in $S'$ as $a_1$.

On the other hand, the $G$-action does not quite respect the grading; in fact,
multiplication by $t$ corresponds to a shift of the degree function $\IZ /
p^{r-1}\IZ\to \mathbb{N}$. We therefore define $S = S' /
(\IZ/p^{r-1}\IZ)$, where $\IZ/p^{r-1}\IZ$ acts on the functions in $S'$ by
shifting. We get an induced $S$-grading on $\Gamma^{j-i} A\otimes \Lambda^{i}
\IZ G$, and now both the $G$-action and the differential of~\eqref{schurcomplex}
respect this grading. Therefore the exact sequence is a direct sum of exact
sequences, one for each element of $S$. For $\sigma\in S$ let us write $E_\sigma
= \bigl(\dots\bigr)_\sigma$ for the degree-$\sigma$-part of the exact sequence,
i.e.,
\[ \dots \to  (\Gamma^{j-i} A\otimes \Lambda^i \IZ G)_\sigma \to 
\dots \to  (\Lambda^j \IZ G)_\sigma \to  (\Lambda^j
\IZ[\zeta])_\sigma \to  0. \] 
The proof is now completed by applying the following 
Lemma~\ref{lem:gradings} because $\Gamma^j A$ is a permutation module.
\end{proof}

\begin{lemma}\label{lem:gradings}
  Let $\sigma\in S$ be represented by $f\in S'$. If $f(w)<p$ for all
  $w\in\IZ/p^{r-1}\IZ$, then the module $(\Gamma^{j-i}A\otimes \Lambda^i\IZ
  G)_\sigma$ is free as $\IZ G$-module for $0<i\leq j$. If $f(w)\geq p$ for some
  $w$, then $\Lambda^j(\IZ[\zeta])_\sigma = 0$.
\end{lemma}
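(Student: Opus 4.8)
The plan is to argue everything on the level of explicit $\IZ$-bases, the only structural input being that the cyclic $p$-group $G = \IZ/p^r\IZ$ possesses a \emph{unique} subgroup $H = \langle t^{p^{r-1}}\rangle$ of order $p$, which is consequently contained in every non-trivial subgroup of $G$. Throughout I identify $\IZ/p^{r-1}\IZ$ with $G/H$ and with the set of cosets of $H$ in $G$; recall $A \cong \IZ[G/H]$.

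First I would prove the vanishing statement. For $w \in \IZ/p^{r-1}\IZ$ let $W_w \subseteq \IZ G$ be the $\IZ$-span of those $t^i$ with $i \equiv w \pmod{p^{r-1}}$; it is free of rank $p$ and is precisely the part of $\IZ G = \Lambda^1\IZ G$ in $S'$-degree $[w \mapsto 1]$, so that $\IZ G = \bigoplus_w W_w$ as graded $\IZ$-modules. Since $T\cdot t^i$ depends only on $i \bmod p^{r-1}$, the kernel $T\cdot\IZ G$ of $\IZ G \to \IZ[\zeta]$ equals $\bigoplus_w \IZ\cdot(Tt^w)$, a graded subgroup in which $\IZ\cdot(Tt^w)$ is a primitive (hence saturated) rank-one sublattice of $W_w$. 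Passing to quotients gives $\IZ[\zeta] = \bigoplus_w V_w$ with $V_w = W_w/\IZ(Tt^w)$ free of rank $p-1$ and homogeneous of $S'$-degree $[w \mapsto 1]$. Hence $(\Lambda^j\IZ[\zeta])_{f'} = \bigotimes_w \Lambda^{f'(w)}V_w$ for every $f' \in S'$, and this vanishes as soon as $f'(w) \geq p > \rk V_w$ for some $w$. Since any representative of $\sigma \in S$ differs from $f$ only by a shift, and the hypothesis ``$f(w) \geq p$ for some $w$'' is shift-invariant, we conclude $(\Lambda^j\IZ[\zeta])_\sigma = 0$.

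Then the freeness statement. Here I would use that $\Gamma^{j-i}A$ has the $\IZ$-basis of divided-power monomials, regarded as multisets $\mu\colon \IZ/p^{r-1}\IZ \to \IN$ of total weight $j-i$, on which $G$ acts through $G/H$ by shifting, with $S'$-degree $\mu$ itself; and that $\Lambda^i\IZ G$ has the $\IZ$-basis $\{e_S\}$ indexed by $i$-element subsets $S \subseteq G$, with $g \cdot e_S = \pm e_{S+g}$ and $S'$-degree the residue profile $\operatorname{res}(S)\colon w \mapsto |S \cap (w+H)|$. Thus $\Gamma^{j-i}A \otimes \Lambda^i\IZ G$ has the $\IZ$-basis $\{\mu \otimes e_S\}$ of $S'$-degree $\mu + \operatorname{res}(S)$, and $(\Gamma^{j-i}A \otimes \Lambda^i\IZ G)_\sigma$ is the $\IZ$-span of those $\mu \otimes e_S$ whose degree lies in the shift-orbit of $f$; this set of basis vectors is mapped into itself, up to sign, by $G$. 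The crucial claim is that the $G$-action on it is \emph{free}: if $g$ fixes some $\mu \otimes e_S$ up to sign, then $S + g = S$, i.e.\ $g \in \operatorname{Stab}_G(S)$; but the degree $\mu + \operatorname{res}(S)$ has all values $< p$, and $S \neq \emptyset$ because $i \geq 1$, so $S$ meets some coset $w_1 + H$ in a non-empty proper subset, so $S$ is not $H$-invariant, so $\operatorname{Stab}_G(S)$ does not contain $H$ and hence is trivial; thus $g = 1$. Given a $G$-stable $\IZ$-basis on which $G$ acts freely up to sign, the family $(g\cdot b)_{g\in G}$ is, for each basis vector $b$, again a $\IZ$-basis of the submodule $\IZ G\cdot b$, so that $\IZ G\cdot b \cong \IZ G$; summing over the $G$-orbits of basis vectors then exhibits $(\Gamma^{j-i}A \otimes \Lambda^i\IZ G)_\sigma$ as a free $\IZ G$-module.

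The points that will need care, but should be routine, are the precise identification of the $S'$-gradings on $\Gamma^*A$, $\Lambda^*\IZ G$ and $\IZ[\zeta]$ with the combinatorial descriptions used above; the bookkeeping that passing from the $S'$- to the $S$-grading merely forms shift-orbits, so that the hypotheses on $f$ pass to all representatives; and the check that the signs in the $G$-action on $\Lambda^i\IZ G$ do not obstruct $\IZ G$-freeness. I expect the one genuinely substantive step --- and the decisive use of $G$ being a cyclic $p$-group --- to be the stabilizer computation: a subset of $\IZ/p^r\IZ$ meeting some $H$-coset in a non-empty proper subset has trivial stabilizer in $G$, because every non-trivial subgroup of $G$ contains $H$.
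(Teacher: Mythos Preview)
Your proposal is correct, and the freeness argument is essentially the paper's: both show that a nontrivial $g\in G$ cannot stabilise a basis vector $\mu\otimes e_S$ (up to sign), the key point being that $S$ meets some $H$-coset in a nonempty proper subset, so $H\not\subseteq\operatorname{Stab}_G(S)$, whence $\operatorname{Stab}_G(S)=\{1\}$ because every nontrivial subgroup of the cyclic $p$-group $G$ contains $H$. The paper phrases this as ``$t^{p^{r-1}}$ has no fixed basis vector'' and runs the same coset count; your version with a general $g$ is equivalent and perhaps a touch cleaner, and you are right to flag the sign bookkeeping --- it is harmless, as you say, since the orbit $\{g\cdot b\}_{g\in G}$ consists of $\pm$(pairwise distinct basis vectors).

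For the vanishing statement you take a genuinely different route. The paper argues on the level of the surjection $(\Lambda^j\IZ G)_\sigma\to\Lambda^j\IZ[\zeta]$: given $b_1\wedge\dots\wedge b_j$ with $f(w)\ge p$, it reorders so that $b_1,\dots,b_p$ exhaust the $H$-coset over $w$ and rewrites the wedge as $(T\cdot b_1)\wedge b_2\wedge\dots\wedge b_j$, which visibly dies in $\Lambda^j\IZ[\zeta]$. You instead grade $\IZ[\zeta]$ itself as $\bigoplus_w V_w$ with $\operatorname{rk}V_w=p-1$, identify $(\Lambda^j\IZ[\zeta])_{f'}$ with $\bigotimes_w\Lambda^{f'(w)}V_w$, and kill it by a pure rank count $\Lambda^{\ge p}V_w=0$. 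Your argument is more structural and avoids manipulating individual wedge monomials; the paper's has the advantage of making explicit exactly which relation (the norm element $T$) is responsible for the vanishing. Both are short and either would serve.
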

\begin{proof}
  For the first part, it is enough to check that the action of $t^{p^{r-1}}$ on
  the canonical $\IZ$-basis elements $\beta = a_1^{[e_1]}\dots a_m^{[e_m]}\otimes
  b_1\wedge \dots\wedge b_i$ for $a_r \in \cala$, $e_r \ge 0$  
  and $b_s \in G = \IZ/p^r$ has no fixed points. 
  Suppose that $t^{p^{r-1}} \beta = \beta$. 
  Then for each $l \in \{0,1,\ldots, p-1\}$ there exists $u(l) \in \{1,2, \ldots, i\}$ 
  with $t^{lp^{r-1}} b_1 = b_{u(l)}$.    Obviously $b_{u(l)} = b_{u(l')}$ if and only if $l = l'$ 
  since $\beta \not = 0$ and $t^{lp^{r-1}} b_1 = t^{l'p^{r-1}} b_1  \Leftrightarrow l = l'$. We conclude
  where $\overline{b_1} \in \IZ/p^{r-1}$ is the reduction of $b_1 \in G = \IZ/p^r$
  \begin{multline*}
   f(\beta)(\overline{b_1}) 
   = \sum_{r=1}^m f(a_r^{[e_r]})(\overline{b_1})  + \sum_{s=1}^i f(b_s)(\overline{b_1})
  \ge  \sum_{l = 0}^{p-1} f(b_{u(l)})(\overline{b_1}) 
   \\
    = \sum_{l = 0}^{p-1} f(t^{lp^{r-1}} b_1)(\overline{b_1}) = \sum_{l = 0}^{p-1} 1 = p.
  \end{multline*}
  For the second assertion we need to check that the map $(\Lambda^j \IZ
  G)_\sigma \to  \Lambda^j \IZ[\zeta]$ is zero, so let us start with an
  element $\beta =b_1\wedge b_2\wedge\dots\wedge b_j$ in 
  $(\Lambda^j \IZ  G)_\sigma$ for $b_s\in G$. 
  Fix $w\in \IZ/p^{r-1}$ with $f(w)\geq p$. Then $p \le j$ and by possible renumbering the $b_s$-s,
  we can arrange that $b_1,b_2,\dots,b_p$ belong to the set 
  $\{t^w, t^{w+p^{r-1}}, \dots, t^{w+(p-1)p^{r-1}} \}$. Furthermore, they are
  pairwise different (otherwise $\beta=0$), so we may assume that $b_l =
  t^{w+lp^{r-1}}$ for all $l=1,2,\dots,p$. Recall that 
  $T=1+t^{p^{r-1}}+t^{2p^{r-1}}+\dots+t^{(p-1)p^{r-1}}\in\IZ G$. 
  Then $T\cdot b_1 =   b_1+b_2+\dots+b_p$, so that 
  $\beta = (T\cdot b_1)\wedge b_2\wedge\dots\wedge b_j$,
  but the latter maps to zero in $\Lambda^j \IZ[\zeta]$.
  This finishes the proof of Lemma~\ref{lem:gradings} and hence of
  Proposition~\ref{pro:specific_resolutions}.
\end{proof}

Now we can finish the proof of Theorem~\ref{the:Tate_cohomology}.
\begin{proof}[Proof of of Theorem~\ref{the:Tate_cohomology}]
By Lemma~\ref{lem:reduction_to_prime} we can  assume without loss of generality
that $m = p^r$ and $L = \IZ[\zeta]^k$ for $\zeta = \exp(2\pi i/p^r)$.
We will show by induction over $k$ that for $k \ge 1$ and $j_1, j_2, \ldots j_k \ge 0$ there exists a
long exact sequence of $\IZ G$-modules
\begin{multline}
\label{desired_long_exact_sequence}
  0 \to  P\to  F_1\to \dots\to  F_{j_1+\dots+j_k}
  \to  \Lambda^{j_1} \IZ[\zeta]  \otimes \cdots \otimes \Lambda^{j_k} \IZ[\zeta] \to  0,
\end{multline}
where $P$ is a direct summand in a permutation module.
Then Theorem~\ref{the:Tate_cohomology} follows since there is the exponential law 
\begin{eqnarray}
& \Lambda^*(X\oplus Y)\cong \Lambda^*(X)\otimes \Lambda^*(Y),  & 
\label{exponential_law}
\end{eqnarray}
Shapiro's Lemma (see~\cite[(5.2) on page 136]{Brown(1982)}) saying that for a
subgroup $H \subseteq G$ we have 
\[\widehat{H}^i(G;\IZ[G/H]) \cong \widehat{H}^i(H;\IZ),
\]
the computation 
\[ 
\widehat{H}^i(\IZ/m;\IZ) = 0 \quad \text{for}\;  i \; \text{odd},
\] 
the formula  
\[
\widehat{H}^i(G;M_1 \oplus M_2) \cong  \widehat{H}^i(G;M_1)  \oplus \widehat{H}^i(G;M_2),
\]
and the isomorphism (see~\cite[(5.1) on page 136]{Brown(1982)}) 
\[
\widehat{H}^i\bigl(G;\Lambda^{j_1} \IZ[\zeta]  \otimes \cdots \otimes \Lambda^{j_k} \IZ[\zeta] \bigr)  
\cong \widehat{H}^{i + j_1 +j_2 + \cdots + j_k}(G;P).
\]
The induction beginning $k = 1$ has already been taken care of in  Proposition~\ref{pro:specific_resolutions}.
The induction step from $k-1$ to $k \ge 2$ is done as follows.  By induction hypothesis
there exists exact sequences of $\IZ G$-modules
\[ 
  0 \to  P\to  F_1\to \dots\to  F_{j_1+\dots+j_{k-1}}
  \to  \Lambda^{j_1} \IZ[\zeta]  \otimes \cdots \otimes \Lambda^{j_{k-1}}  \IZ[\zeta] \to  0. 
\] 
and 
\[ 
  0 \to  Q\to  F'_1\to \dots\to  F'_{j_{k}}
  \to  \Lambda^{j_{k}} \IZ[\zeta] \to  0. 
\] 
where $P$ and $Q$ are permutation modules.  Since
$\Lambda^{j_1} \IZ[\zeta] \otimes \cdots \otimes \Lambda^{j_{k-1}} \IZ[\zeta]$ 
is finitely generated free as $\IZ$-module, we obtain an
exact sequence of $\IZ G$-modules
\begin{multline*}
  0 \to  \Lambda^{j_1} \IZ[\zeta]  \otimes \cdots \otimes \Lambda^{j_{k-1}}  \IZ[\zeta] \otimes Q
\to  \Lambda^{j_1} \IZ[\zeta]  \otimes \cdots \otimes \Lambda^{j_{k-1}}  \IZ[\zeta] \otimes F'_1
\\
\to \dots
\to  \Lambda^{j_1} \IZ[\zeta]  \otimes \cdots \otimes \Lambda^{j_{k-1}}  \IZ[\zeta] \otimes F'_{j_{k}}
  \to  \Lambda^{j_1} \IZ[\zeta]  \otimes \cdots \otimes \Lambda^{j_{k-1}}  \IZ[\zeta] \otimes \Lambda^{j_{k}} \IZ[\zeta]\to  0,
\end{multline*}
where all the modules except the one at the beginning and the one at
the end are finitely generated free $\IZ G$-modules.  Analogously we
we obtain an exact sequence $\IZ G$-modules
\[
0 \to  P \otimes Q \to  F_1 \otimes Q  \to \dots\to  F_{j_1+\dots+j_{k}} \otimes Q 
  \to  \Lambda^{j_1} \IZ[\zeta]  \otimes \cdots \otimes \Lambda^{j_{k-1}}  \IZ[\zeta] \otimes Q  \to  0,
\]
where all the modules except the one at the beginning and the one at
the end are finitely generated free $\IZ G$-modules and $P \otimes Q$
is a permutation module. Splicing the last two
long exact sequences together yields the desired long exact
sequence~\eqref{desired_long_exact_sequence} of $\IZ G$-modules.  This
finishes the proof of Theorem~\ref{the:Tate_cohomology}.
\end{proof}

%%%%%%%%%%%%%%%%%%%%%%%%%%%%%%%%%%%%%%%%%%%%%%%%%%%%%%%%%%%%%%%%%%%%%%%%%
%%%%%%%%%%%%%%%%%%%%%%%%% Section 2:  Tthe cohomology of $\Gamma$ %%%%%%%%%%%%%%%%%
%%%%%%%%%%%%%%%%%%%%%%%%%%%%%%%%%%%%%%%%%%%%%%%%%%%%%%%%%%%%%%%%%%%%%%%%%

\typeout{-------  Section 2: Accessing the cohomology of Gamma by underline{E}Gamma -------}

\section{The cohomology of $\Gamma$}
\label{sec:The_cohomology_of_Gamma}

In this section we present a computation of the group cohomology of
the semi-direct product $\Gamma = L \rtimes_{\phi} G$ in high degrees
provided that $G$ acts freely on $L$ outside the origin. It is
independent of the Lyndon-Hochschild-Serre spectral sequence but uses
classifying spaces for proper actions. For a survey on classifying
spaces for families and in particular for the classifying space for
proper actions we refer for instance to~\cite{Lueck(2005s)}.

Here we will only need the following facts.  A 
\emph{model $\underline{E}\Gamma$ for proper $\Gamma$-actions} is a
$\Gamma$-$CW$-complex whose isotropy groups are all finite and whose
$H$-fixed point set is contractible for every finite subgroup $H
\subseteq \Gamma$.  Such a model exists and is unique up to $\Gamma$-homotopy. We
will denote by $\underline{B}\Gamma$ the quotient $\Gamma \backslash
\underline{E}\Gamma$.

Now we are ready to prove Theorem~\ref{the:cohomology_of_Gamma}.

\begin{proof}[Proof of Theorem~\ref{the:cohomology_of_Gamma}]
Since the $G$-action on $L$ is free outside the origin,
every non-trivial finite subgroup $H \subseteq \Gamma$ is contained in
a unique maximal finite subgroup $M$ and for every maximal finite
subgroup $M \subseteq \Gamma$ we have $N_{\Gamma} M = M$
(see~\cite[Lemma~6.3]{Lueck-Stamm(2000)}. We obtain
from~\cite[Corollary~2.11]{Lueck-Weiermann(2007)} a cellular
$\Gamma$-pushout
\begin{eqnarray}
&
\xycomsquareminus{\coprod_{M \in \calm} \Gamma \times _M EM}{i_0}{E\Gamma}
  {\coprod_{M \in \calm} \pr_M }{f}
  {\coprod_{M \in \calm} \Gamma/M}{i_1}{\underline{E}\Gamma}
 &
\label{G-pushout_forunderlineEGamma}
\end{eqnarray}
where $i_0$ and $i_1$ are inclusions of $\Gamma$-$CW$-complexes, $\pr_M$ is the
obvious $\Gamma$-equivariant projection and $\calm$ is a complete system of representatives 
the set of conjugacy classes of maximal finite subgroups of $\Gamma$.  Taking the quotient with respect
to the $\Gamma$-action yields the cellular pushout
\[
  \xycomsquareminus{\coprod_{M \in \calm} BM}{j_0}{B\Gamma}
  {\coprod_{M \in \calm} \overline{\pr}_M }{\overline{f}}
  {\coprod_{M \in \calm} \pt}{j_1}{\underline{B}\Gamma}
\]
where $j_0$ and $j_1$ are inclusions of $CW$-complexes, $\overline{\pr}_M$ is the
obvious projection.  It yields the following long exact sequence for $k \ge 0$
\begin{multline}
  0 \to {H}^{2k}(\underline{B}\Gamma) \xrightarrow{\overline{f}^*} {H}^{2k}(\Gamma)
  \xrightarrow{\varphi^{2k}} \bigoplus_{(M) \in \calm} \widetilde{H}^{2k}(M)
  \\
  \xrightarrow{\delta^{2k}} {H}^{2k+1}(\underline{B}\Gamma)
  \xrightarrow{\overline{f}^*} {H}^{2k+1}(\Gamma) \to 0
  \label{long_exact_cohomology_sequences_for_bub(Gamma)_BGamma)}
\end{multline}
where $\varphi^*$ is the map induced by the various inclusions $M \subset \Gamma$ for $M \in \calm$
and $\widetilde{H}^{2k}(M)$ is reduced cohomology, i.e., $\widetilde{H}^{2k}(M) = H^{2k}(M)$ for $k \ge 1$
and $\widetilde{H}^{0}(M) = 0$.

One can construct a model for $\underline{E}\Gamma$ whose dimension as a
$\Gamma$-$CW$-complex is $n$ (see~\cite[Example~5.26]{Lueck(2005s)}). Namely,
one can take $\IR \otimes_{\IZ} L$ with the obvious $\Gamma$-action coming from
the $L$-action given by translation and the $G$-action coming from $G
\xrightarrow{\rho} \aut_{\IZ}(L) \to \aut_{\IR}(\IR \otimes_{\IZ} L)$, where the
second map comes from applying $\IR \otimes_{\IZ} -$.  Now
Theorem~\ref{the:cohomology_of_Gamma} follows.
\end{proof}

%%%%%%%%%%%%%%%%%%%%%%%%%%%%%%%%%%%%%%%%%%%%%%%%%%%%%%%%%%%%%%%%%%%%%%%%%
%%Section 3:  The Relation of Conjecture~\ref{con:Adem-Ge-Pan-Petrosyan}  and Theorem~\ref{the:Tate_cohomology} Proof of Theorem~ref{the:positive} %%%%%%
%%%%%%%%%%%%%%%%%%%%%%%%%%%%%%%%%%%%%%%%%%%%%%%%%%%%%%%%%%%%%%%%%%%%%%%%%

\typeout{--------  Section 3:  The Relation of Conjecture~ref{con:Adem-Ge-Pan-Petrosyan}  and Theorem~ref{the:Tate_cohomology} --------------}

\section{The Relation of Conjecture~\ref{con:Adem-Ge-Pan-Petrosyan} and Theorem~\ref{the:Tate_cohomology}}
\label{sec:The_Relation_of_Conjecture_and_Theorem_Tate}

\begin{lemma}
  \label{lem:Theorem_Tate_implies_Theorem_positiv_and_vice_versa}
  Suppose that $G$ acts freely outside the origin on $L$. Then
  Theorem~\ref{the:Tate_cohomology} is true if and only if the differentials in the 
  Lyndon-Hochschild-Serre spectral sequence associated to $\Gamma = G \rtimes_{\phi} G$ vanish.
\end{lemma}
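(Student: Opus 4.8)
The plan is to read everything off the $E_2$-page $E_2^{i,j}=H^i\bigl(G;H^j(L)\bigr)$ of the Lyndon-Hochschild-Serre spectral sequence of $1\to L\to\Gamma\to G\to 1$. Here $H^j(L)\cong\Lambda^j\operatorname{Hom}_{\IZ}(L,\IZ)$ as $\IZ G$-modules; the $G$-action on $\operatorname{Hom}_{\IZ}(L,\IZ)$ is again free outside the origin, and after the reduction of Lemma~\ref{lem:reduction_to_prime} (so that $m=p^r$ and $L_{(p)}\cong\IZ_{(p)}[\zeta]^k$) the trace form identifies $\operatorname{Hom}_{\IZ}(L,\IZ)_{(p)}$ with the inverse different of $\IZ_{(p)}[\zeta]$, a principal fractional ideal in the discrete valuation ring of Lemma~\ref{lem:dvr} and hence free of rank one; therefore $\widehat{H}^i\bigl(G;H^j(L)\bigr)\cong\widehat{H}^i\bigl(G;\Lambda^j(L)\bigr)$ for all $i,j$. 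Since $G$ is finite cyclic, Tate cohomology is $2$-periodic and agrees with ordinary cohomology in positive degrees, so the conclusion of Theorem~\ref{the:Tate_cohomology} is equivalent to the statement
\[E_2^{i,j}=0\quad\text{for all }i\ge 1\text{ with }i+j\text{ odd},\]
which is the form I shall use below.

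For the implication ``Theorem~\ref{the:Tate_cohomology} $\Rightarrow$ all differentials vanish'' I first run the parity bookkeeping. For a differential $d_r\colon E_r^{i,j}\to E_r^{i+r,j-r+1}$ with $r\ge 2$ the target lies in horizontal degree $i+r\ge 1$ and total degree $i+j+1$, so $d_r\ne 0$ forces $i+j$ to be odd; but then the source has odd total degree, and for $i\ge 1$ it is a subquotient of $E_2^{i,j}=0$. Hence the only possibly non-zero differentials are $d_r\colon E_r^{0,j}\to E_r^{r,j-r+1}$ with $j$ odd (and necessarily $1\le j\le n$, since $E_2^{0,j}=H^j(L)^G=0$ for $j>n$, and $2\le r\le j+1$, else the target vanishes). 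I would kill these using the module structure of the spectral sequence over the subring $E_2^{*,0}=H^*(G;\IZ)=\IZ[x]/(mx)$, where $x$ is the periodicity class in bidegree $(2,0)$. Fix such $j$ and $r$ and let $N$ be large. A short inspection of total degrees, using the displayed form of Theorem~\ref{the:Tate_cohomology}, shows that none of the positions $(2N,0)$, $(r,j-r+1)$, $(2N+r,j-r+1)$ is touched by a differential before page $r$, so there $E_r$ coincides with $E_2$; that $E_r^{2N,j}=0$, being a subquotient of $E_2^{2N,j}=\widehat{H}^{2N}(G;H^j(L))=0$; and that $x^N$ is a $d_r$-cocycle on page $r$, since the target of $d_r$ there has negative vertical degree. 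Hence, for any $y\in E_r^{0,j}$,
\[0=d_r(x^N\cdot y)=x^N\cdot d_r(y)\in E_r^{2N+r,j-r+1}=H^{2N+r}\bigl(G;H^{j-r+1}(L)\bigr),\]
the first equality because $x^N\cdot y$ already vanishes in $E_2^{2N,j}=0$, the second by the Leibniz rule and $d_r(x^N)=0$. But multiplication by $x^N$ is the periodicity isomorphism $H^r(G;-)\xrightarrow{\ \cong\ }H^{2N+r}(G;-)$ (valid since $r\ge 1$), whence $d_r(y)=0$. So all differentials vanish.

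For the converse ``all differentials vanish $\Rightarrow$ Theorem~\ref{the:Tate_cohomology}'' I would invoke Theorem~\ref{the:cohomology_of_Gamma}. If the spectral sequence collapses at $E_2$, then $H^k(\Gamma)$ carries a filtration with associated graded $\bigoplus_{i+j=k}E_2^{i,j}$. By Theorem~\ref{the:cohomology_of_Gamma} we have $H^k(\Gamma)=0$ for odd $k>n$, hence $E_2^{i,j}=0$ whenever $i+j$ is odd and $>n$; equivalently, using $E_2^{i,j}=\widehat{H}^i(G;H^j(L))$ for $i\ge 1$, we obtain $\widehat{H}^i\bigl(G;H^j(L)\bigr)=0$ for all $i\ge 1$, $j\ge 0$ with $i+j$ odd and $i+j>n$. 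By $2$-periodicity of the Tate cohomology of $G$ this upgrades to all $i$ with $i+j$ odd, and the identification $\widehat{H}^i(G;H^j(L))\cong\widehat{H}^i(G;\Lambda^j(L))$ from the first paragraph turns it into the assertion of Theorem~\ref{the:Tate_cohomology}.

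The main obstacle is the last step of the first implication. Parity leaves untouched precisely the differentials leaving the $0$-th column in odd fibre degree $\le n$, and there $E_2^{0,j}=H^j(L)^G$ is in general non-zero --- for instance, for $m=5$ and $L=\IZ[\zeta_5]^2$ the rank of $E_2^{0,3}$ is $8$ --- while Theorem~\ref{the:Tate_cohomology} by itself imposes no vanishing on the target of such a differential. The multiplicativity-periodicity argument resolves this, but it hinges on the slightly delicate verification that, on the precise page $r$ at which such a differential could be non-zero, the entries involved have not yet been altered and so literally equal the corresponding $E_2$-terms, so that the periodicity isomorphism may be applied on the nose.
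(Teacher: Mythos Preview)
Your proof is correct, and your treatment of the converse direction (collapse $\Rightarrow$ Theorem~\ref{the:Tate_cohomology}) is essentially identical to the paper's: both invoke Theorem~\ref{the:cohomology_of_Gamma} to get $H^{k}(\Gamma)=0$ for odd $k>n$, read off $E_2^{i,j}=0$ for $i\ge 1$, $i+j$ odd, $i+j>n$, and then use $2$-periodicity.

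For the forward direction the approaches genuinely diverge. After the same parity reduction to differentials $d_r\colon E_r^{0,j}\to E_r^{r,j-r+1}$ with $j$ odd, the paper kills these via the \emph{edge homomorphisms}: the splitting of $\Gamma\to G$ makes $\pr^*\colon H^*(G)\to H^*(\Gamma)$ injective, and the transfer $i^!$ for $L\hookrightarrow\Gamma$ gives $i^*\circ i^! = N_G$, whose cokernel on $H^r(L)$ is $\widehat{H}^0(G;H^r(L))=0$ for $r$ odd; hence $i^*$ is surjective onto $E_2^{0,r}$ and nothing can support a differential out. Your argument instead exploits the $E_2^{*,0}$-\emph{module structure}: you annihilate $y\in E_r^{0,j}$ by the periodicity class $x^N$ (since $E_r^{2N,j}\subset E_2^{2N,j}=0$), apply Leibniz, and cancel $x^N$ from $x^N\cdot d_r(y)=0$ using that $E_r=E_2$ at the relevant spots. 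The paper's route is slightly slicker here---it uses only the single vanishing $\widehat{H}^0(G;H^j(L))=0$ plus the section and transfer, all of which are immediate for a split extension---whereas yours requires the careful bookkeeping (which you do correctly) that positions $(r,j{-}r{+}1)$ and $(2N{+}r,j{-}r{+}1)$ are untouched before page $r$. On the other hand, your argument is entirely internal to the multiplicative spectral sequence and does not appeal to the splitting or the transfer, which may be a conceptual advantage in related settings.

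Your opening paragraph, arguing that Theorem~\ref{the:Tate_cohomology} for $L$ is equivalent to the same statement for $L^\vee$ (via the reduction of Lemma~\ref{lem:reduction_to_prime} and self-duality of $\IZ_{(p)}[\zeta]$), is more explicit than the paper, which passes silently from $\Lambda^j L$ to $H^j(L)\cong\Lambda^j L^\vee$.
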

\begin{proof}
  The cup-product induces isomorphisms $\Lambda^j H^1(L)
  \xrightarrow{\cong} H^j(L)$, natural with respect to automorphisms
  of $L$, for $j \ge 0$. By the universal coefficient theorem we obtain
  an isomorphism $L\dual := \hom_{\IZ}(L,\IZ) \xrightarrow{\cong}
  H^1(L)$ which is natural with respect to automorphisms of $L$.
  Putting this together we obtain an isomorphism, natural with respect
  to automorphisms of $L$,
  \begin{eqnarray*}
    \Lambda^j L\dual & \xrightarrow{\cong} & H^j(L).
  \end{eqnarray*}

  We first show that Theorem~\ref{the:Tate_cohomology} implies
  the vanishing of all the differentials. From Theorem~\ref{the:Tate_cohomology}
  we conclude that $E^2_{i,j} = 0$ for $i + j$ odd and $i \ge 1$ since
  the Tate cohomology in dimensions $i \ge 1$ coincides with
  cohomology. Hence by the checkerboard pattern of the $E^2$-term the
  only non-trivial differentials which can occur are those who start
  at the vertical axis or end at the horizontal axis at a point in odd position on the axis.
  To show that all these differentials  vanish, we have to
  prove that all edge homomorphisms are trivial. This boils down to
  show of the projection $\pr \colon \Gamma \to G$ and for the
  inclusion $i\colon L \to \Gamma$ that the map 
 $\pr^* \colon H^r(G)  \to H^r(\Gamma)$ is injective  and the map 
 $i^* \colon  H^r(\Gamma) \to H^r(L)^G$ is surjective for odd $r$.  The map
 $\pr^*$ is injective since $\pr$ has a section. Let 
 $i^! \colon    H^r(L) \to H^r(\Gamma)$ be the transfer map.  Its composition with
 $i^* \colon H^r(\Gamma) \to H^r(L)^G$ is the map $H^r(L) \to
  H^r(L)^{G}$ given by multiplication with the norm element $N := \sum_{g \in G} g$,
  and the cokernel of this map is isomorphic to $\widehat{H}^0(G;H^r(L))$ 
 (see \cite[(5.1) on page~134]{Brown(1982)}). By assumption  $\widehat{H}^0(G;H^r(L))$
 vanishes for odd $r$. Hence $i^* \circ {i^!}$ is surjective in odd dimensions
 and hence $i^*$ is surjective in odd dimensions. This finishes
the proof that all differentials in the Leray-Hochschild-Serre spectral sequence vanish.

Now suppose that all differentials vanish. Then
Theorem~\ref{the:positive} holds by the following argument. We know
that $H^{2m+1}(\Gamma)$ vanishes for $2m+1 > n$ by
Theorem~\ref{the:cohomology_of_Gamma}. Since all differentials in the
Leray-Serre spectral sequence vanish, we conclude that $\widehat{H}^i(G;H^j(L)) = H^i(G;H^j(L)) =0$ 
for $i \ge 1$, $i +j$ odd and $i + j > n$. Since the Tate cohomology is
$2$-periodic for finite cyclic groups, this implies that
$\widehat{H}^i(G;H^j(L)) = 0$ holds for all $i,j$ with $i +j$ odd.
\end{proof}

\begin{remark}
Notice that Theorem~\ref{the:positive}
and Lemma~\ref{lem:Theorem_Tate_implies_Theorem_positiv_and_vice_versa}
give another proof of Theorem~\ref{the:Tate_cohomology}
independent of the one presented in Section~\ref{sec:Proof_of_Theorem_the:Tate_cohomology}.
\end{remark}

%%%%%%%%%%%%%%%%%%%%%%%%%%%%%%%%%%%%%%%%%%%%%%%%%%%%%%%%%%%%%%%%%%%%%%%%% 
%%%%%%%%%%%%%%%%%% Section 4:  The cohomology classes $[\alpha_s]$ %%%%%%%%%%%%%%%%%%%%%
%%%%%%%%%%%%%%%%%%%%%%%%%%%%%%%%%%%%%%%%%%%%%%%%%%%%%%%%%%%%%%%%%%%%%%%%%

\typeout{--------  Section 4:  The cohomology classes $[\alpha_s]$  --------------}

\section{The cohomology classes $[\alpha_s]$}
\label{sec:The_cohomology_classes_[alpha_s]}

Next  we introduce certain cohomology classes which will be used to describe the second differential
in the Lyndon-Hochschild-Serre spectral sequence and are obstructions to the existence of a
compatible group action in the sense of~\cite[Definition~2.1]{Adem-Pan(2006)}). We will also give a description
in terms of endomorphism of free groups.

%%%%%%%%%%%%%%%%%%%%%%%%%%%%%%%%%%%%%%%%%%%%%%%%%%%%%%%%%%%%%%%%%%%%%%%% 

\subsection{The definition of the classes $[\alpha_s]$}
\label{subsec:The_definition_of_the_classes_[alpha_s]}

Let $\rho \colon L \to L$ be the automorphism of $L$ given by multiplication
with a fixed generator $t$ of the cyclic group $G$ of order $m$.  Denote by 
$\tau = \IZ \rho \colon \IZ G \to \IZ G$ the ring automorphism of $\IZ L$ induced by
$\rho$.  Obviously $\rho^m = \id_L$ and $\tau^m = \id_{\IZ L}$.

Let $(P_*,\partial)$ be a projective resolution over $\IZ L$ of the trivial
module $\IZ$ with the additional property that the complex $\hom_{\IZ L}(P_*,\IZ)$ 
has trivial differentials. As usual, let $\tau^*$ denote the
endofunctor of the category of $\IZ L$-modules given by pulling back scalars
along $\tau$. Then $P_*$ and $\tau^* P_*$ both are projective $\IZ L$-resolutions of the
trivial module $\IZ$, so there is a chain map $z \colon \tau^* P_*\to P_*$
lifting the identity of $\IZ$. Then 
$H^*(L)=\hom_{\IZ L}(P_*,\IZ)=\hom_{\IZ  L}(\tau^* P_*,\IZ)$ 
gets a $\IZ G$-module structure via the map $\hom_{\IZ L}(z,\id_{\IZ})$. 
Note that $H^*(L)\cong \Lambda^* L\dual$ as $\IZ G$-modules.

Now the $m$-fold composition $z^m$ is a $\IZ L$-chain map $P_*\to P_*$
lifting the identity of $\IZ$. Therefore there is a $\IZ L$-chain homotopy
$y \colon P_*\to  P_*[1]$ with $dy=\partial y+y\partial=z^m-1$.  This induces a map
\[
\alpha_s\colon H^{s+1}(L) \cong \hom_{\IZ L}(P_{s+1},\IZ) 
\xrightarrow{\hom_{\IZ L}(y,\id_{\IZ})} \hom_{\IZ L}(P_s,\IZ) \cong H^s(L). 
\] 
We claim that $\alpha_s$ is a $G$-equivariant map. To see this, consider the map 
$zy-yz \colon \tau^* P_*\to  P_*[1]$. Since
\[ 
d(zy-yz) = z(dy) - (dy)z = z(z^m-1) - (z^m-1)z = 0,
\] 
it is a chain map. Therefore, it must be null-homotopic, so there is a map 
$x \colon \tau^* P_*\to P_*[2]$ with $dx=\partial x-x\partial=zy-yz$. 
Since $\hom_{\IZ L}(P_*,\IZ)$ has trivial differential, we get $0=\hom_{\IZ L}(dx,\id_{\IZ})=\hom_{\IZ L}(zy-yz,\id_{\IZ})$,
proving that $\alpha_s$ is indeed $\IZ G$-linear.

We can think of $\alpha_s$ as an element 
\[
\alpha_s \in \Ext^0_{\IZ G}(H^{s+1}(L),H^s(L)) = \hom_{\IZ G}(H^{s+1}(L),H^s(L)).
\]
Using the obvious pairing coming from the tensor product over $\IZ$ (with the diagonal $G$-action)
\[
\Ext^i_{\IZ G}(M_1,M_2) \otimes \Ext^j_{\IZ G}(N_1,N_2) \to \Ext^{i+j}_{\IZ G}(M_1 \otimes_{\IZ} N_1,M_2\otimes_{\IZ} N_2)
\]
and the generator of the group $\Ext^2_{\IZ G}(\IZ,\IZ) = H^2(G) \cong \IZ/m$
given by the extension $\IZ \to \IZ G \xrightarrow{1-t} \IZ G \to \IZ$ for the fixed generator $t \in G$,
we obtain the desired class
\begin{eqnarray}
& [\alpha_s]  \in \Ext^2_{\IZ G}(H^{s+1}(L),H^s(L)). &
\label{[alpha_s]}
\end{eqnarray}

It is not hard to check that the classes $[\alpha_s]$ are independent
of the choices of $P_*$, $z$, and $y$.

%%%%%%%%%%%%%%%%%%%%%%%%%%%%%%%%%%%%%%%%%%%%%%%%%%%%%%%%%%%%%%%%%%%%%%%% 

\subsection{The second differential}
\label{subsec:The_second_differential}

Notice that we have for $\IZ G$-modules $M_1$ and $M_2$ a pairing
\[
\Ext^{i}_{\IZ G}(\IZ,M_1) \otimes \Ext^{j}_{\IZ G}(\IZ,\hom_{\IZ}(M_1,M_2)) \to \Ext^{i+j}_{\IZ G}(\IZ,M_2)
\]
coming from the cup product with respect to the pairing
$M_1 \otimes_{\IZ} \hom_{\IZ}(M_1,M_2) \to M_2$ sending $m_1 \otimes f$ to $f(m_1)$.

\begin{lemma}\label{lem:differential_is_product_with_alpha}
The map 
\[
H^r(G,H^{s+1}L) \xrightarrow{- \cdot [\alpha_s]\cdot (-1)^{r} }  H^{r+2}(G,H^*L) 
\]
given by taking products:
\begin{align*}
 H^r(G,H^{s+1}L) = \Ext_{\IZ G}^r(\IZ,H^{s+1}L) &\to \Ext_{\IZ G}^{r+2}(\IZ,H^sL) = H^{r+2}(G,H^sL) \\ 
 u &\mapsto u\cdot  [\alpha_s]\cdot (-1)^{r} 
\end{align*}
is the $d_2$-differential of the  Lyndon-Hochschild-Serre spectral sequence.
\end{lemma}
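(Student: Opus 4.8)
The plan is to identify the $d_2$-differential of the Lyndon-Hochschild-Serre spectral sequence by tracking it through an explicit double complex computing $H^*(\Gamma)$. First I would set up the standard bar-type double complex: pick the projective resolution $(P_*,\partial)$ of $\IZ$ over $\IZ L$ with vanishing differential on $\hom_{\IZ L}(P_*,\IZ)$ as in Section~\ref{sec:The_cohomology_classes_[alpha_s]}, pick a projective resolution $(Q_*,\partial_Q)$ of $\IZ$ over $\IZ G$, and form the double complex $C^{p,q} = \hom_{\IZ G}\bigl(Q_p,\hom_{\IZ L}(P_q,\IZ)\bigr)$, where the $G$-action on $\hom_{\IZ L}(P_q,\IZ)$ is the one induced by the chain map $z\colon \tau^*P_*\to P_*$. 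The associated spectral sequence (filtering by $p$) is the LHS spectral sequence, with $E_2^{p,q} = H^p(G;H^q(L))$ because the $q$-th cohomology of the inner complex is $H^q(L)$ with its $G$-module structure $\Lambda^qL\dual$. The point is that the horizontal differential in this double complex is not literally $0$ — it is controlled by $z$ — so the failure of $z$ to be $G$-equivariant on the nose (i.e. $z^m\neq 1$, measured by the homotopy $y$) is exactly what produces a nonzero $d_2$.

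Next I would compute $d_2$ via the usual zig-zag. Take a class in $E_2^{r,s+1}$, represent it by a cocycle $c\in C^{r,s+1}$ that is a vertical cocycle; its horizontal differential $d_h c$ is a vertical coboundary, so write $d_h c = d_v e$ for some $e\in C^{r+1,s}$; then $d_2$ of the class is represented by $d_h e\in C^{r+2,s}$. The concrete model for the horizontal differential uses the fact that for the generator of $\Ext^2_{\IZ G}(\IZ,\IZ)$ coming from the extension $\IZ\to \IZ G\xrightarrow{1-t}\IZ G\to\IZ$, one may take the $2$-truncation of $Q_*$ to be $\IZ G\xrightarrow{1-t}\IZ G$, and then the horizontal differential on $C^{*,q}$ in this range is precisely multiplication by $1-t$, i.e. by $1-z$ at the level of coefficients. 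So $d_h c$ measures the defect of $z$-invariance, and chasing the zig-zag through $1-z$, $1-z^2$, …, $1-z^m = dy$ collapses the telescoping sum and leaves precisely the homotopy $y$ — hence $\hom_{\IZ L}(y,\id_\IZ) = \alpha_s$ — composed with the $\Ext^2_{\IZ G}(\IZ,\IZ)$ class, which is exactly the definition of $[\alpha_s]$ from~\eqref{[alpha_s]}, together with the cup-product pairing described just before the lemma. The sign $(-1)^r$ is the bookkeeping sign from the Koszul rule when commuting the horizontal and vertical differentials past each other in the double complex, together with the shift convention $\partial_{P[n]} = (-1)^n\partial_P$ from the Notations section.

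The identification of the pairing is then essentially formal: the map $u\mapsto u\cdot[\alpha_s]$ is, by construction of $[\alpha_s]$ as a product of $\alpha_s\in\hom_{\IZ G}(H^{s+1}L,H^sL)$ with the canonical generator of $\Ext^2_{\IZ G}(\IZ,\IZ)$, exactly the cup product $H^r(G;H^{s+1}L)\otimes H^2(G;\hom_\IZ(H^{s+1}L,H^sL))\to H^{r+2}(G;H^sL)$ paired against the image of the generator of $\Ext^2_{\IZ G}(\IZ,\IZ)$ under $\alpha_s$. I expect the main obstacle to be keeping the signs and the various identifications ($H^*(L)\cong\Lambda^*L\dual$, the $G$-action via $z$, the shift conventions, the identification of the generator of $\Ext^2_{\IZ G}(\IZ,\IZ)$ with the chosen truncation of $Q_*$) coherent throughout the zig-zag — the homological algebra is standard but the LHS spectral sequence differential is notoriously sign-sensitive, and one must check that the choices made in defining $[\alpha_s]$ in Section~\ref{sec:The_cohomology_classes_[alpha_s]} match the choices forced by the double complex. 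The independence of $[\alpha_s]$ from $P_*$, $z$, $y$ noted at the end of that section guarantees the answer is well-defined regardless.
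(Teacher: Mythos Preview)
Your overall strategy---compute $d_2$ by a zig-zag in a double complex and watch the homotopy $y$ appear---is the right instinct, but the double complex you wrote down does not compute $H^*(\Gamma)$. In $C^{p,q}=\hom_{\IZ G}\bigl(Q_p,\hom_{\IZ L}(P_q,\IZ)\bigr)$ the vertical differential is induced by $\hom_{\IZ L}(\partial_P,\IZ)$, which you have \emph{assumed} to be zero; and because $z^m-1=\partial y+y\partial$ with $\hom_{\IZ L}(\partial,\IZ)=0$, the map $\hom_{\IZ L}(z,\IZ)$ already satisfies $\hom_{\IZ L}(z,\IZ)^m=1$ on the nose, so $\hom_{\IZ L}(P_q,\IZ)$ is an honest $\IZ G$-module. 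Your total complex therefore has cohomology $\bigoplus_{p+q=n}H^p(G;H^q(L))$ and the associated spectral sequence collapses at $E_2$---there is no zig-zag to chase. The information carried by $y$ has been killed by the very hypothesis on $P_*$ that makes $\alpha_s$ well defined.

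What is missing is a $\IZ\Gamma$-resolution built from $P_*$ and $Q_*$. In Wall's construction (or equivalently in the framework of Charlap--Vasquez $G$-systems used by the paper) the differential on the associated bicomplex has, in addition to $\partial_P$ and $\partial_Q$, diagonal correction terms built from $z$ and $y$; it is the $y$-term that survives the zig-zag and produces $\alpha_s$. The paper does not carry out this zig-zag directly: instead it packages $z$ and $y$ into a $G$-system $(A,U)$ in the sense of Charlap--Vasquez, invokes their theorem that $d_2(\chi)=(-1)^p\theta(\chi)\cup v^q$ for the characteristic class $v^q$ attached to the $G$-system, and then proves a separate lemma identifying $v^q$ with the image of $[\alpha_q]$ under an explicit isomorphism (this is where the bar resolution and the standard periodic resolution of $\IZ$ over $\IZ G$ are compared). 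Your approach could be made to work if you replace your $C^{p,q}$ by $\hom_{\IZ\Gamma}$ of a genuine $\IZ\Gamma$-resolution, but as written the argument has no mechanism for $y$ to enter.
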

  Let us remark here that this is shown in a slightly different setup in
  \cite[Corollary 2]{Siegel(1995)}. For convenience of 
  the reader, we give a proof here which is adapted to our situation.

  To do so, we use the results of~\cite{Charlap-Vasquez(1969)}. 
  Define a \emph{$G$-system} (see~\cite[Definition in~I.1 on page~534]{Charlap-Vasquez(1969)})
  to consist of maps $A(g)\in \hom_{\IZ L}(P_*,P_*^g)$ for
  every $g\in G$ and $U(g,h)\in\hom_{\IZ L}(P_*,P_*^{gh})$ for every pair
  $(g,h)\in G\times G$, such that the following conditions hold:
  \begin{align*}
    \epsilon A(g) &= \epsilon,  && \text{where $\epsilon$ is the augmentation of $P_*$, } \\
    dA(g) &=0 && \text{for all $g\in G$,} \\
    dU(g,h)&=A(gh)-A(g)A(h) && \text{for all $g,h\in G$.}
  \end{align*}
  In our case, we can define $A(t^i)=z^{m-i}$ for $i=1,\dots,m-1$ and $A(1)=1$, and we put
  \[ 
  U(t^i,t^j)=\begin{cases} 0 & \text{if $i+j>m$ or $i=0$ or $j=0$; } 
   \\ 
   -yz^{i+j-m} &
    \text{otherwise.} \end{cases} 
  \] 
  In~\cite[II.2]{Charlap-Vasquez(1969)}, characteristic classes are constructed as follows. 
  By the universal coefficient theorem, $H^n(L,X)\cong\hom_{\IZ}(H_n(L),X)$ for all $\IZ$-free
  modules $X$ with trivial $L$-action. Choose a cocycle $f^n\in\hom_{\IZ L}(P_n,H_n(L))$
  (where $H_n(L)$ is regarded as module with trivial $L$-action) representing the identity map
  in $\hom_{\IZ}(H_n(L),H_n(L))$. For each $g\in G$, there is some $F_g^{n-1}\in\hom_{\IZ L}(P_{n-1},H_n(L))$
  with $f^n\circ A_n(g)-f^n = F_g^{n-1}\partial_n$. In our case, the differential on $\hom_{\IZ L}(P_*,X)$
  is zero for every $\IZ$-free module $X$ with trivial $L$-action, so we can assume that $F_g=0$.
  
  Now~\cite[Equation~(2) in~I.2 on page~536]{Charlap-Vasquez(1969)}
  reduces to the definition 
  \[ u^n(g,h) = (gh)_*[ f^n U_{n-1} (h^{-1},g^{-1})] \in\hom_{\IZ L}(P_{n-1},H_n(L)) \]
  for all $g,h\in G$, where $(gh)_*$ is the action of $gh$ on homology $H_n(L)$, so that 
  $u^n(g,h)$ equals the composition
  \begin{align}\label{mludefinition}
     P_{n-1}^{gh} \xrightarrow{U_{n-1}(h^{-1},g^{-1})} P_n \xrightarrow{f^n} H_n(L) 
        \xrightarrow{gh} H_n(L). 
  \end{align}
  Then $u^n(g,h)$ is a cocycle defining a cohomology class $w^n(g,h)\in H^{n-1}(L,H_n(L))$.
  By \cite[Theorem~1 in~II.2.1 on page~537]{Charlap-Vasquez(1969)} the collection of these cohomology classes
  defines a class $v^n\in H^2(G,H^{n-1}(L,H_n(L)))$.
  We would like to compare this class with our $[\alpha_n]$. To do so, note that by the universal
  coefficient theorem, $H^n(L)\cong \hom_{\IZ}(H_n(L),\IZ)$, and let us denote by $D$ the isomorphism
  \[ D \colon \hom_{\IZ}(H^n(L), H^{n-1}(L))\to \hom_{\IZ}(H_{n-1}(L),H_n(L)) \]
  given by dualizing. Also, the universal coefficient theorem gives us an isomorphism
  $\hom_{\IZ}(H_{n-1}(L),H_n(L))\cong H^{n-1}(L,H_n(L))$.
  \begin{lemma}\label{lem:alpha_and_v_agree}
    When we apply $H^2(G,-)$ to the $G$-linear isomorphism 
    \begin{align*}
      \gamma \colon \hom_{\IZ}(H^n(L), H^{n-1}(L))\xrightarrow{D} \hom_{\IZ}(H_{n-1}(L),H_n(L))
                  \xrightarrow{\cong} H^{n-1}(L,H_n(L)),
    \end{align*}
    then under the resulting map the class $[\alpha_n]$ maps to the class $v^n$.
  \end{lemma}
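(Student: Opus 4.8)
The plan is to represent both $[\alpha_n]$ and $v^n$ by explicit $2$-cocycles for $G$ with coefficients in the two modules that $\gamma$ identifies, and then to observe that these cocycles coincide. First I would record the translation between $\Ext$-groups over $\IZ G$ and group cohomology with $\hom$-coefficients: since $H^n(L)$ is finitely generated free over $\IZ$, tensoring a free $\IZ G$-resolution of $\IZ$ with $H^n(L)$ (diagonal action) produces a free $\IZ G$-resolution of $H^n(L)$, so tensor-hom adjunction gives a natural isomorphism $\Ext^{\bullet}_{\IZ G}\bigl(H^n(L),H^{n-1}(L)\bigr) \cong H^{\bullet}\bigl(G;\hom_{\IZ}(H^n(L),H^{n-1}(L))\bigr)$. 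Under this isomorphism the class $[\alpha_n]$ from~\eqref{[alpha_s]}, being the cup product of the $\IZ G$-linear map $\alpha_n$, viewed in $H^0\bigl(G;\hom_{\IZ}(H^n(L),H^{n-1}(L))\bigr)$, with the distinguished generator of $\Ext^2_{\IZ G}(\IZ,\IZ) = H^2(G;\IZ)\cong\IZ/m$, is represented on the bar resolution by the $2$-cocycle $(t^i,t^j)\mapsto c(t^i,t^j)\cdot\alpha_n$, where $c\colon G\times G\to\IZ$ is the ``carrying'' cocycle with $c(t^i,t^j)=1$ for $i+j\ge m$ and $c(t^i,t^j)=0$ otherwise, over representatives $0\le i,j\le m-1$; this $c$ represents the chosen generator because it is a factor set of the $2$-extension $\IZ\to\IZ G\xrightarrow{1-t}\IZ G\to\IZ$ used in Section~\ref{subsec:The_definition_of_the_classes_[alpha_s]}. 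The $\IZ G$-linearity of $\alpha_n$ is exactly what makes $(t^i,t^j)\mapsto c(t^i,t^j)\cdot\alpha_n$ a cocycle despite the nontrivial $G$-action on the coefficients.

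Next I would feed the concrete $G$-system $A(t^i)=z^{m-i}$ for $1\le i\le m-1$, $A(1)=1$, together with the displayed $U(t^i,t^j)$, into the Charlap--Vasquez formula~\eqref{mludefinition}. Since $U_{n-1}(t^{-j},t^{-i})$ vanishes outside the ``carry'' range, the cochain $u^n(t^i,t^j)=(t^{i+j})_{*}\bigl[f^n\circ U_{n-1}(t^{-j},t^{-i})\bigr]$ vanishes unless $i+j\ge m$ (for $0\le i,j\le m-1$), and in that range it equals, up to sign, $(t^{i+j})_{*}\bigl[f^n\circ y_{n-1}\circ z^{k}\bigr]$ for the exponent $k=k(i,j)$ prescribed by the definition of $U$. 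The key computation is that this power of $z$, transported to $H_n(L)$ by $f^n$ and then corrected by the action $(t^{i+j})_{*}$, collapses to $\id_{H_n(L)}$: indeed $z$ induces on $H_{*}(L)$, and dually on $H^{*}(L)$, after the universal coefficient identifications, precisely the chosen generator $t$ of $G$ — the very $G$-action on $H^{*}(L)$ that enters the definition of $[\alpha_n]$. Hence $u^n(t^i,t^j)=c(t^i,t^j)\cdot\eta$, where $\eta\in\hom_{\IZ}(H_{n-1}(L),H_n(L))\cong H^{n-1}(L,H_n(L))$ is the element represented by $f^n\circ y_{n-1}$.

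Finally I would identify $\eta$ with $\gamma(\alpha_n)$. By construction $\alpha_n$ is $\hom_{\IZ L}(y_{n-1},\id_{\IZ})$ read through the universal coefficient isomorphisms; dualizing, $D(\alpha_n)\in\hom_{\IZ}(H_{n-1}(L),H_n(L))$ is exactly the homomorphism induced on homology by $y_{n-1}\colon P_{n-1}\to P_n$, which is $\eta$. Thus $H^2(G,\gamma)([\alpha_n])$ and $v^n$ are both represented by the $2$-cocycle $(t^i,t^j)\mapsto c(t^i,t^j)\cdot\gamma(\alpha_n)$ and therefore coincide. The hard part will be the middle step: one has to keep careful track of signs, of the reductions $t^{-i}=t^{m-i}$, and of the powers of $z$ produced by composing the $A(g)$'s and $U$'s, and to verify that the resulting twist really is the fixed $\IZ G$-module structure on $H^{*}(L)$ coming from $z$, so that after this cancellation only the scalar carrying cocycle $c$ survives. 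Comparison with \cite[Corollary~2]{Siegel(1995)} and with the description of $d_2$ in Lemma~\ref{lem:differential_is_product_with_alpha} provides a useful consistency check along the way.
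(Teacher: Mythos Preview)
Your approach is correct and lands at the same identification $D(\alpha_n)=\id_{\IZ}\otimes_{\IZ L}y$ that the paper uses; the route, however, is genuinely different. The paper does \emph{not} stay on the bar resolution and compute all $u^n(t^i,t^j)$. Instead it writes down the explicit chain map $F_*\to B_*$ from the standard periodic resolution, pulls the bar cocycle $u^n$ back along it, and observes that in the resulting sum $-\sum_{i=0}^{m-1}u^n(t^i,t)$ exactly one term survives, namely $-u^n(t^{-1},t)=[f^n y]$; so on $F_2$ the class $v^n$ is represented by $1\mapsto [f^n y]$, which is immediately matched with $\gamma(\alpha_n)$.

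What your approach buys is directness: you never leave the bar resolution, and the comparison with the carrying cocycle for the chosen generator of $H^2(G;\IZ)$ is conceptually clean. The price is the ``collapse'' step, which is the one place where your sketch is a little loose. The power $z^k$ sits on the $P_{n-1}$ side of $y$, while $(t^{i+j})_*$ acts on the $H_n(L)$ side; they do not literally cancel to $\id_{H_n(L)}$. What makes the term independent of $(i,j)$ is the $G$-equivariance of $\alpha_n$ (equivalently of $y_*$), already established in Section~\ref{subsec:The_definition_of_the_classes_[alpha_s]}: once you know $z$ induces $t^{\pm 1}$ on $H_*(L)$, equivariance gives $(t^{i+j})_*\, y_*\, (z^{k})_* = y_*$, and then $u^n(t^i,t^j)=\pm\, c(t^i,t^j)\cdot\gamma(\alpha_n)$ as you want. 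The paper's approach sidesteps this entirely, because the single surviving term has $gh=1$ and $z^0=\id$, so no equivariance argument and no tracking of $z$-versus-$t$ conventions is needed. Either way the conclusion is the same; just be aware that your global sign (carrying cocycle versus the chosen Yoneda generator) needs to be pinned down, as you yourself note.
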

  \begin{proof}
  Let $B_*$ be the bar resolution, whose modules are given by $B_s=(\IZ G)^{\otimes(s+1)}$ with 
  $G$ acting on the first factor. Then $B_s$ is $\IZ G$-free on generators $(g_1,\dots,g_s)$ with $g_i\in G$, and
  the differential is given by
  \[
    (g_1,\dots,g_s) \mapsto g_1(g_2,\dots,g_s) + \sum_{r=1}^{s-1} (-1)^r (g_1,\dots,g_rg_{r+1},\dots,g_s)
                            + (-1)^s(g_1,\dots,g_{s-1}).
  \]
  Let $F_*$ be the standard free resolution
  \[  \dots \to \IZ G \xrightarrow{1-t} \IZ G \xrightarrow{t^{m-1}+\dots+t+1} \IZ G 
         \xrightarrow{1-t} \IZ G. \]
  Then $v^n$ is represented by $u^n\in\hom_{\IZ G}(B_2,H^{n-1}(L,H_n(L)))$, and the first step will
  be to find a representative for $v^n$ in $\hom_{\IZ G}(F_2,H^{n-1}(L,H_n(L)))$. Note that we can
  construct a map of augmented chain complexes $F_*\to B_*$ as follows:
  \[ 
     \xymatrix@C=40pt{
         \dots \ar[r] & F_2 \ar[r]^{t^{m-1}+\dots+t+1} \ar[d]_{1\mapsto -\sum_{i=0}^{m-1}(t^i,t)} &
                        F_1 \ar[r]^{1-t} \ar[d]_{1\mapsto -(t)} & 
                        F_0 \ar[d]_{1\mapsto ()} \ar[r] & 
                        \IZ \ar@{=}[d] \\
         \dots \ar[r] & B_2 \ar[r] & B_1 \ar[r] & B_0 \ar[r] & \IZ
     }
  \]
  We therefore find a cocycle in $\hom_{\IZ G}(F_2,H^{n-1}(L,H_n(L)))$ by evaluating $u^n$ at
  $-\sum_{i=0}^{m-1} (t^i,t)$. From~\eqref{mludefinition} we see that $u^n$ is zero at most of the 
  terms (note the definition of $U_n$), the only remaining part is 
  \[ -u^n(t^{-1},t) = -[f^n U_{n-1}(t^{m-1},t)] = [f^n y]. \]
  This means that the $G$-linear map $1\mapsto [f^n y]\in H^{n-1}(L,H_n(L))$ is a cocycle in 
  $\hom_{\IZ G}(F_2,H^{n-1}(L,H_n(L)))$ representing $v^n$. 

  Recall the isomorphism $H^{n-1}(L,H_n(L))\xrightarrow{\cong} \hom_{\IZ}(H_{n-1}(L),H_n(L))$: 
  Given any cocycle $c\in \hom_{\IZ L}(P_{n-1},H_n(L))$, we can form 
  \[ \id_{\IZ} \otimes_{\IZ L} c \colon \IZ  \otimes_{\IZ L} P_{n-1} \to \IZ \otimes_{\IZ L} H_n(L)\cong H_n(L).\] 
  Passing to the homology of the complex $\IZ\otimes_{\IZ L} P_*$ yields a map $H_{n-1}(L)\to H_n(L)$,
  the image of the class $[c]$ in $\hom_{\IZ}(H_{n-1}(L),H_n(L))$. We have the natural isomorphism
  \begin{align}\label{mlzzlisomorphism}
    \IZ \otimes_{\IZ L} -  \cong \hom_{\IZ}(\hom_{\IZ L}(-,\IZ),\IZ)
  \end{align}
  for all the modules we are interested in, so the differential of $\IZ \otimes_{\IZ L} P_*$ is zero. 
  Therefore, the class $[f^n y]$ corresponds to the composition
  \[ H_{n-1}(L) \xrightarrow{\id_{\IZ}\otimes_{\IZ L} y} H_n(L) \xrightarrow{\id_{\IZ} \otimes_{\IZ L} f^n} H_n(L), \]
  where the second map is the identity by definition of $f^n$. Therefore, the $G$-linear map
  $F_2\to \hom_{\IZ}(H_{n-1}(L),H_n(L)), 1\mapsto \id_{\IZ}\otimes_{\IZ L} y$ represents the class $v^n$.
  But by~\eqref{mlzzlisomorphism} we have $D(\alpha_n)=D(\hom_{\IZ L}(y,\id_{\IZ})) = \id_{\IZ} \otimes_{\IZ L} y$.
  \end{proof}

  We continue with the proof of Lemma~\ref{lem:differential_is_product_with_alpha}.
  By~\cite[Proposition~2 and Theorem~4 in~I.3 on page~539 and~540]{Charlap-Vasquez(1969)}, 
  the $d_2$-differential can be 
  described as follows. The $G$-linear isomorphism $\vartheta \colon H^q(L)\xrightarrow{\cong} H^0(L,H^q(L))$ induces a map
  $\theta \colon H^p(G,H^q(L))\xrightarrow{\cong} H^p(G,H^0(L,H^q(L)))$. On the other hand, we have the class
  $v^q\in H^2(G,H^{q-1}(L,H_q(L)))$. The pairing $H^q(L)\otimes H_q(L)\to \IZ$ induces a product
  $H^0(L,H^q(L))\otimes H^{q-1}(L,H_q(L))\to H^{q-1}(L)$. Then for a class
  $\chi\in E_2^{p,q} = H^p(G,H^q(L))$ we have
  \[ d_2(\chi) = (-1)^p \theta(\chi) \cup v^q. \]
  To finish the proof of Lemma~\ref{lem:differential_is_product_with_alpha} it is therefore enough to show that the diagram
  \begin{align}\label{commtoprove}
    \xymatrix{H^q(L) \otimes \hom_{\IZ} (H^q(L), H^{q-1}(L))  \ar[r]^-{\text{ev}} \ar[d]_{\vartheta\otimes \gamma} 
           & H^{q-1}(L) \ar@{=}[d]  \\
       H^0(L,H^q(L)) \otimes H^{q-1}(L,H_q(L)) \ar[r] & H^{q-1}(L) 
      }
  \end{align}
  commutes, where $\gamma$ is as in Lemma~\ref{lem:alpha_and_v_agree}.
  To see this, let $X$, $Y$ and $Z$ be finitely generated free $\IZ$-modules with trivial $L$-action, and let a 
  map $Z\to Y\dual$ be given. Then we have
  a commutative diagram
  \begin{align}\label{commdiagdualize}
    \xymatrix{X\otimes \hom_{\IZ} (X,Z) \ar[r]\ar[d] & Z \ar[dd] \\
        X\otimes \hom_{\IZ} (Z\dual,X\dual) \ar[d]  \\
        X\otimes \hom_{\IZ} (Y,X\dual) \ar[r] & Y\dual 
    }
  \end{align}
  where the left-hand side vertical maps are given by dualizing and the natural map $Y\to (Y\dual)\dual$; the horizontal 
  arrows are evaluation maps. We also have the natural diagram
  \begin{align}\label{commcupproduct}
    \xymatrix{X\otimes \hom_{\IZ} (H_{q-1}(L),Y) \ar[r]\ar[d]^{\cong} & \hom_{\IZ} (H_{q-1}(L),X\otimes Y) \ar[d]^{\cong} \\
       H^0(L,X)\otimes H^{q-1}(L,Y) \ar[r]_-{\cup} & H^{q-1}(L,X\otimes Y)
    }
  \end{align}
  This commutes because by naturality, it is enough to consider the case $X=Y=\IZ$ which is a tautology. Now we put
  things together and obtain
  \[
    \xymatrix@C=10pt{
       H^q(L)\otimes [H^q(L),H^{q-1}L] \ar[rr]^{\text{ev}} \ar[d] \ar@{}[rrd]|{(a)} & & H^{q-1}(L) \ar[d] \\
       H^q(L)\otimes [H_{q-1}(L),H^q(L)\dual] \ar[rr] \ar@{=}[d] \ar@{}[rrd]|{(b)} & & [H_{q-1}(L),\IZ] \ar@{=}[d] \\
       H^q(L)\otimes [H_{q-1}(L),H^q(L)\dual] \ar[r] \ar[d] \ar@{}[dr]|{(c)} & [H_{q-1}(L),H^q(L)\otimes H^q(L)\dual] \ar[r]^-{\text{ev}_*} \ar[d] \ar@{}[dr]|{(d)}
             & [H_{q-1}(L),\IZ] \ar[d] \\
       H^0(L,H^q(L)) \otimes H^{q-1}(L,H^q(L)\dual) \ar[r]^-{\cup} & H^{q-1}(L,H^q(L)\otimes H^q(L)\dual) \ar[r]^-{\text{ev}_*} 
             & H^{q-1}(L)
    }
  \]
  where we wrote $[X,Y]$ for $\hom_{\IZ}(X,Y)$. The square (a) is~\eqref{commdiagdualize} for $X=H^q(L)$, $Y=H_{q-1}(L)$, 
  $Z=H^{q-1}(L)$, and $Z\to Y\dual$ is the map from the universal coefficient theorem. The square (b) commutes
  by definition, (d) commutes by naturality of the universal coefficient theorem, and (c) is~\eqref{commcupproduct} for
  the case $X=H^q(L)$ and $Y=H^q(L)\dual$. A quick check asserts that the ``outer square'' agrees with~\eqref{commtoprove},
  up to another application of the universal coefficient theorem $H^q(L)\dual \cong H_q(L)$. This finishes the proof
  of Lemma~\ref{lem:differential_is_product_with_alpha}.

\begin{remark}
Note that the second differential can also be identified with the composite
\[
H^r(G,H^{s+1}L) \xrightarrow{H^r(G,\alpha_s)} H^r(G,H^sL) \to  H^{r+2}(G,H^sL),
\]
where the second map is the map coming from taking the cup product
with the  generator of the group $H^2_{\IZ G}(G,\IZ)$, which is the two-periodicity isomorphism
if $r \ge 1$. Since we have first fixed a choice of generator  of $G$ and then chosen the generator
$H^2_{\IZ G}(G,\IZ)$ accordingly, the map above is indeed independent of the choice
of generator of $G$.
\end{remark}

%%%%%%%%%%%%%%%%%%%%%%%%%%%%%%%%%%%%%%%%%%%%%%%%%%%%%%%%%%%%%%%%%%%%%%%% 

\subsection{Obstructions for the existence of a compatible group action}
\label{subsec:Obstructions_for_the_existence_of_a_compatible_group_action}

These classes serve as obstructions for the existence of a compatible action in the sense
of~\cite[Definition~2.1]{Adem-Ge-Pan-Petrosyan(2008)}. 
Let us recall their definition here.
\begin{definition}[Compatible group action]\label{def:compatible_group_action}
  Let $K$ be an arbitrary group acting $\IZ$-linearly on the abelian group $A$. Suppose that $P_*\to \IZ$
  is a free resolution of the trivial $\IZ A$-module $\IZ$ over $\IZ A$. Then we say that $P_*$ admits a
  compatible $K$-action if there is an augmentation-preserving chain map 
  $t_k\colon P_*\to P_*$ for each $k\in K$ such that the following conditions
  are satisfied:
  \begin{itemize}
    \item[(1)] $t_k(p\cdot a) = a^k\cdot t_k(p)$ for all $a\in A$, $k\in K$, $p\in P_*$, where
                      $a^k$ denotes the action of $k$ on $A$,
    \item[(2)] $t_kt_{k'} = t_{kk'}$ for all $k,k'\in K$,
    \item[(3)] $t_1=1_{P_*}$.
  \end{itemize}
\end{definition}
Notice that (1) is equivalent to saying that $t_k\in \hom_{\IZ L}(P_*,P_*^k)$.

Now let $G$ be the cyclic group of order $m$ as before. The following lemma 
is an immediate consequence of the definitions and one should think of a compatible
action just as described below. A free resolution $P_*$ is called \emph{special} if
the differential of the complex $\hom_{\IZ L}(P_*,\IZ)$ is zero.

\begin{lemma}
  There is a compatible action of $G$ on a special free resolution $P_*$ if and only if the
  chain map $z$ can be chosen in such a way that $z^m=1$.  If this is
  the case, then all the Ext-classes $[\alpha_s]$ for $s\geq 0$ are zero.
\end{lemma}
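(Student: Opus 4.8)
The plan is to read off both implications directly from the definitions; the single piece of real content is that as soon as $z^m=1$ the $\IZ L$-chain homotopy $y$ used to build the classes $\alpha_s$ may be chosen to be identically zero.

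First I would treat the implication ``compatible action $\Rightarrow$ $z$ with $z^m=1$''. Given a compatible $G$-action $(t_k)_{k\in G}$ on the special free resolution $P_*$, I would set $z:=t_{t^{m-1}}$; this is legitimate because conditions~(2) and~(3) of Definition~\ref{def:compatible_group_action} force every $t_k$ to be invertible with inverse $t_{k^{-1}}$, so equivalently $z=t_t^{-1}$. By condition~(1) and the remark after the definition, $z$ is an augmentation-preserving $\IZ L$-chain map with $z\in\hom_{\IZ L}(P_*,P_*^{t^{m-1}})=\hom_{\IZ L}(\tau^*P_*,P_*)$, hence an admissible choice for the lift $z$ occurring in the construction of the $[\alpha_s]$ (that lift is unique only up to $\IZ L$-chain homotopy, so any one of them may be used). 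Conditions~(2) and~(3) then give $z^m=t_{t^{m-1}}^{\,m}=t_{t^{m(m-1)}}=t_1=1_{P_*}$.

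Next I would treat the converse, together with the vanishing of the $[\alpha_s]$. Suppose the lift $z$ has been chosen with $z^m=1_{P_*}$. Since the chain homotopy $y\colon P_*\to P_*[1]$ is only required to satisfy $dy=z^m-1$, and the right-hand side is now $0$, I would take $y=0$. Then the $G$-system $(A,U)$ constructed before Lemma~\ref{lem:differential_is_product_with_alpha}, with $A(t^i)=z^{m-i}$, $A(1)=1_{P_*}$ and $U(t^i,t^j)$ a multiple of $y$ or $0$, has $U\equiv 0$; hence the $G$-system relation $dU(g,h)=A(gh)-A(g)A(h)$ degenerates to $A(gh)=A(g)A(h)$. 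Combined with $A(1)=1_{P_*}$, $A(g)\in\hom_{\IZ L}(P_*,P_*^g)$, $\epsilon A(g)=\epsilon$ and $dA(g)=0$, this exhibits $(A(g))_{g\in G}$ as a compatible $G$-action on $P_*$. Finally, for this same $z$ and $y=0$ the map $\alpha_s=\hom_{\IZ L}(y,\id_{\IZ})$ is zero for all $s\ge 0$, so its cup product $[\alpha_s]\in\Ext^2_{\IZ G}(H^{s+1}(L),H^s(L))$ with the generator of $\Ext^2_{\IZ G}(\IZ,\IZ)$ vanishes; since the $[\alpha_s]$ do not depend on the choices of $P_*$, $z$ and $y$, they are zero whenever a compatible action exists.

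I do not expect a genuine obstacle: the statement is essentially a reformulation of the definitions. The only point needing care is the bookkeeping of $\tau$-semilinearity — matching $t_t$ and its powers with $z$ and with the maps $A(t^i)=z^{m-i}$, so that the various twisted modules $P_*^{t^i}$ and $\tau^*P_*$ line up — and the routine observation that any augmentation-preserving chain lift of $\id_{\IZ}$ may legitimately serve as the $z$ in the definition of the $[\alpha_s]$.
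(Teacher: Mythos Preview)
Your proposal is correct and is precisely the unpacking the paper has in mind: the paper gives no proof at all, stating only that the lemma ``is an immediate consequence of the definitions,'' and your argument fills in exactly those details by matching $z$ with the powers $A(t^i)=z^{m-i}$ of the compatible action and observing that $z^m=1$ allows the choice $y=0$.
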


Let $\{e_1,e_2,\ldots,e_n\}$ be a basis of the abelian a group
$L$. We denote by $(l)$ (with $l\in L$) the $\IZ$-basis elements of
$\IZ L$; in particular, $(0)$ is the unit of the ring $\IZ L$. From
now on, we will have a particular projective $\IZ L$ resolution $P_*$ of the trivial $\IZ L$-module $\IZ$
in mind, namely, the \emph{Koszul complex}, which is defined as follows. As
$\IZ L$-module, $P_i$ is free of rank $\binom{n}{i}$ with generators
$[i_1,i_2,\dots,i_i]$, $1\leq i_1<i_2<\dots<i_i\leq n$. The
differential $\partial$ is given by
\[ [i_1,i_2,\dots,i_i] \mapsto \sum_{j=1}^i (-1)^{j-1}
\bigl((0)-(e_j)\bigr)\cdot [i_1,\dots,\widehat{i_j},\dots,i_i]. \]

\begin{lemma}\label{lem:main_example}
  Let $m=4$, $n=3$, and let $\rho$ be given by the integral matrix
  $\left(\begin{smallmatrix} 0 & 1 & 0 \\ -1 & 0 & 1 \\ 0 & 0 & 1 \end{smallmatrix}\right)$. 

  Then $[\alpha_1]\neq 0$, and so
  there is no compatible $G$-action in this case. Nevertheless, the
  associated Lyndon-Hochschild-Serre spectral sequence collapses.
\end{lemma}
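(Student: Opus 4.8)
The statement has two parts: (1) compute $[\alpha_1]\neq 0$ for the explicit $3\times 3$ matrix $\rho$ with $m=4$, and (2) check that the Lyndon-Hochschild-Serre spectral sequence nevertheless collapses. For part (1) I would work directly with the Koszul complex $P_*$ just introduced, which is already \emph{special} in the required sense (the differential of $\hom_{\IZ L}(P_*,\IZ)$ vanishes, since each boundary map is divisible by the augmentation-type element $(0)-(e_j)$). The task is then completely mechanical: first construct an explicit chain map $z\colon\tau^*P_*\to P_*$ lifting $\id_{\IZ}$, where $\tau=\IZ\rho$; because $P_*$ has small rank ($1,3,3,1$ in degrees $0,1,2,3$), one writes $z$ degree by degree by solving the lifting equations $\partial z = z\partial$ over $\IZ L$. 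Then compute the fourfold composite $z^4$, produce a chain homotopy $y\colon P_*\to P_*[1]$ with $\partial y + y\partial = z^4 - 1$, and read off $\alpha_1 = \hom_{\IZ L}(y,\id_{\IZ})\colon H^2(L)\to H^1(L)$. Finally, cup with the chosen generator of $\Ext^2_{\IZ G}(\IZ,\IZ)\cong\IZ/4$ to land in $\Ext^2_{\IZ G}(H^2(L),H^1(L))$ and verify this class is non-zero — concretely by writing out the $2$-cocycle over the standard free resolution $F_*$ of $\IZ$ over $\IZ G$ (periodic with differentials $1-t$ and $1+t+t^2+t^3$) using the formula $u\mapsto [\,\cdot\,]$ from Lemma~\ref{lem:differential_is_product_with_alpha}'s proof, and checking it is not a coboundary. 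Here $G=\IZ/4$ acts on $H^1(L)=L^{\wedge}$ and on $H^2(L)=\Lambda^2 L^{\wedge}$ via the transpose inverse of $\rho$, and one needs to see that the resulting element of $\widehat{H}^2(G;\hom_{\IZ}(\Lambda^2 L^\wedge,L^\wedge))$, equivalently of $\widehat{H}^0$ by periodicity, does not vanish.

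For part (2), the point is that even though there is no compatible $G$-action and $[\alpha_1]\neq 0$, the spectral sequence can still collapse for dimension reasons. With $n=3$ the $E_2$-page is concentrated in the horizontal strip $0\le q\le 3$. The only potentially non-zero differentials are $d_2\colon E_2^{p,q}\to E_2^{p+2,q-1}$ and $d_3\colon E_3^{p,q}\to E_3^{p+3,q-2}$ (higher $d_r$ vanish by the strip width), and in fact one only needs to control $d_2$, since the class $[\alpha_s]$ governs $d_2$ and the relevant $d_3$ will be forced to vanish once $d_2$ does. By Lemma~\ref{lem:differential_is_product_with_alpha}, $d_2$ on $E_2^{p,q}$ is cup product with $[\alpha_{q-1}]$, which is an element of $H^2(G;\hom_{\IZ}(H^q(L),H^{q-1}(L)))$. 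The differentials that could be non-zero are those into or out of rows $q=1,2,3$; but note that $d_2$ on row $q=1$ lands in row $q=0$ where $H^0(L)=\IZ$ carries trivial $G$-action, and $d_2$ out of row $q=3$ starts there — so only finitely many group-cohomology computations are involved. The key claim is that, for \emph{this particular} $\rho$, the cup-product maps $H^p(G;H^q(L))\to H^{p+2}(G;H^{q-1}(L))$ induced by $[\alpha_{q-1}]$ happen to be zero, even though $[\alpha_1]\neq 0$ as an Ext-class: vanishing of the $d_2$ differential only requires that cupping with $[\alpha_1]$ kills $H^p(G;H^2(L))$, which is a weaker condition than $[\alpha_1]=0$ itself (it can fail to be detected because $H^p(G;H^2(L))$ may be small, or because the relevant product vanishes).

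\textbf{The main obstacle.} The genuinely non-routine step is verifying the \emph{non}-vanishing of $[\alpha_1]$ in part (1) while simultaneously getting the \emph{vanishing} of the associated $d_2$ maps in part (2); these pull in opposite directions and the example is calibrated precisely so that both hold. Concretely, one must compute $\widehat{H}^*(G;L^\wedge)$, $\widehat{H}^*(G;\Lambda^2 L^\wedge)$, $\widehat{H}^*(G;\Lambda^3 L^\wedge)$ and $\widehat{H}^*(G;\hom_{\IZ}(\Lambda^2L^\wedge,L^\wedge))$ for $G=\IZ/4$ acting through the given matrix — these are finite abelian groups computable from the norm/augmentation sequence, but the identification of $[\alpha_1]$ as a specific class in the second of these, and the check that cupping with it is zero on $H^p(G;\Lambda^2 L^\wedge)$, requires tracking the explicit cocycle through the machinery. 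I expect the cleanest route is: diagonalize or block-decompose the $\IZ G = \IZ[t]/(t^4-1)$-module $L$ after localizing/completing as in Lemma~\ref{lem:reduction_to_prime} (here $p=2$), identify $L$ with an explicit module over $\IZ[i] = \IZ[t]/(t^2+1)$ times a rank-one piece, compute the exterior powers as $\IZ G$-modules, and then the Tate cohomology groups become small and the product computation is a finite check. The fact that $[\alpha_1]$ is the obstruction class of Charlap-Vasquez, combined with Remark after Lemma~\ref{lem:differential_is_product_with_alpha} giving independence of the generator, makes the bookkeeping manageable.
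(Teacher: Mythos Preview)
Your plan for part~(1) matches the paper's approach: work with the Koszul complex, write $z$ explicitly in degree~$1$, compute $z^4$, choose $y$, and read off $\alpha_1$. The paper's detection of $[\alpha_1]\neq 0$ is, however, slicker than your proposed route of computing the full group $\widehat H^2(G;\hom_{\IZ}(\Lambda^2 L^\wedge,L^\wedge))$ and identifying the class inside it. Instead, the paper exhibits a single $G$-invariant element $a\in \Lambda^2 L^\wedge\otimes L$ and evaluates the cup-product pairing $[\alpha_1]\cup a\in \Ext^2_{\IZ G}(\IZ,\IZ)\cong\IZ/4$, obtaining $2\neq 0$. This avoids all module-structure bookkeeping and reduces the non-vanishing to a one-line pairing computation. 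Your suggestion to decompose $L$ after $2$-localization as in Lemma~\ref{lem:reduction_to_prime} is also not directly available here: that lemma uses freeness of the action outside the origin, which fails for this $\rho$ (the vector $(1,1,2)$ is fixed).

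For part~(2) there is a genuine gap in your outline. You assert that ``the relevant $d_3$ will be forced to vanish once $d_2$ does,'' but give no mechanism for this; knowing $[\alpha_s]$ governs $d_2$ says nothing about $d_3$ or $d_4$ (the latter also potentially survives, since the strip has width~$3$). The paper does not attempt a direct verification at all: the collapse of the spectral sequence for this specific example was already observed by Adem--Ge--Pan--Petrosyan, and the paper simply cites~\cite[page~350]{Adem-Ge-Pan-Petrosyan(2008)}. If you wanted to prove collapse from scratch you would need a separate argument for the higher differentials and the extension problems, not just $d_2$.
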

\begin{proof}
  We start by writing down the beginning of an explicit choice of
  chain map $z \colon \tau^* P_*\to  P_*$. Let 
  $z \colon \tau^*  P_0\to  P_0$ be the map $\tau^{-1}$, and define
  \begin{align*}
    z\colon \tau^* P_1&\to  P_1 \\
    [1] &\mapsto [2]  \\
    [2] &\mapsto -(-e_1)[1] \\
    [3] &\mapsto [1]+(e_1)[3].
  \end{align*}
  Let us determine $z^4\colon P_1\to  P_1$. We have
  \[ [1] \xrightarrow{z} [2] \xrightarrow{z} -(-e_1)[1]
  \xrightarrow{z} -(-e_2)[2] \xrightarrow{z} [1]. \] 
  Notice here, for instance, that 
  $z(-(-e_2)[2]) = -(\rho^{-1}(-e_2))\cdot z([2]) =  (e_1)(-e_1)[1] = [1]$.
  From the computation we get $z^4([1])=[1]$
  and $z^4([2])=[2]$. Now,
  \[ [3] \xrightarrow{z} [1]+(e_1)[3] \xrightarrow{z}
  [2]+(e_2)([1]+(e_1)[3]) = (e_2)[1] + [2] + (e_1+e_2)[3]. \]
  Therefore, $z^4$ maps $[3]$ to
  \begin{align*}
    -(-e_1-e_2)[1]  -(-e_2)[2] + (-e_1-e_2)\bigl((e_2)[1]+[2]+(e_1+e_2)[3]\bigr) \\
    = \bigl( (-e_1) -(-e_1-e_2) \bigr)\cdot[1] + \bigl(
    -(-e_2) + (-e_1-e_2) \bigr)\cdot[2] + [3]
  \end{align*}
  Now we start choosing $y$. Let $y\colon P_0\to  P_1$ be the zero
  map. Furthermore, we can put $y([1])=y([2])=0$. For $y([3])$, we
  have to choose a lift of $z^4([3])-[3]$ along $\partial$; one such
  lift is
  \[ y([3]) = (-e_1-e_2)\cdot [1,2] \] Now we will show that
  $[\alpha_1]\in\Ext_{\IZ G}^2(\Lambda^2 L\dual, L\dual) \cong
  \Ext_{\IZ G}^2(\IZ,\hom_\IZ(\Lambda^2 L\dual,L\dual))$ 
  is   non-zero. For any two $\IZ G$-modules $U,V$, we have a natural pairing
  \[
  \hom_{\IZ}(U,V) \otimes (U\otimes V\dual)\to  \IZ
  \] 
  given by $f\otimes(u\otimes v)\mapsto v(f(u))$. Consider the exterior cup
  product followed by that map:
  \[ \Ext^2_{\IZ G}(\IZ,\hom_{\IZ}(U,V)) \otimes 
  \hom_{\IZ G}(\IZ,U\otimes V\dual) \xrightarrow{\cup} \Ext^2_{\IZ G}(\IZ,\IZ)
  \cong \IZ/4\IZ. \] Now put $U=\Lambda^2 L\dual$ and $V=L\dual$, and
  denote by $e_1\dual, e_2\dual, e_3\dual$ the dual basis for
  $e_1,e_2,e_3$. Then
  \[ 
  a = (e_1\dual\wedge e_2\dual)\otimes (e_1+e_2+2e_3) -
  (e_1\dual\wedge e_3\dual)\otimes e_3 + (e_2\dual\wedge
  e_3\dual)\otimes (e_2+e_3) 
  \] is a $G$-invariant element of
  $U\otimes V\dual$. Under the  pairing $\hom_{\IZ}(U,V) \otimes (U\otimes V\dual)\to  \IZ$ 
  mentioned above   we get $\alpha_1 \otimes a = 2$. This implies
  $[\alpha_1]\cup a = 2\in \IZ/4\IZ$, and hence   $[\alpha_1]\neq 0$.

  The collapse of the spectral sequence was noted
  in~\cite[page~350]{Adem-Ge-Pan-Petrosyan(2008)}. 
\end{proof}

\begin{remark} \label{rem:mininmal_dimension} If $n \le 2$, then there always
  exists a compatible group action on the Koszul
  resolution by~\cite[Theorem~3.1]{Adem-Pan(2006)}.  Hence
  our example for a lattice without compatible group action on the Koszul
  resolution appearing in Theorem~\ref{lem:main_example} has minimal rank, namely
  $n = 3$.
\end{remark}

%%%%%%%%%%%%%%%%%%%%%%%%%%%%%%%%%%%%%%%%%%%%%%%%%%%%%%%%%%%%%%%%%%%%%%%% 

\subsection{An approach via free groups}
\label{subsec:An_approach_via_free_groups}

Let us establish a connection to free groups. Denote by $F_n$ the free group in
$n$ letters $x_1,\ldots,x_n$. Let $\pi \colon F_n\to  L$ be the surjection
$x_i\mapsto e_i$, where $\{e_1,\ldots,e_n\}$ is a basis of $L$. Recall that for every
group $X$, we have the lower central series 
\[
X=\Gamma_1 X\supset \Gamma_2 X \supset \Gamma_3 X\supset\cdots,
\]
where $\Gamma_i X$ is defined inductively by $\Gamma_{i+1} X = [X, \Gamma_i X]$
for $i \ge 1$. In particular $\Gamma_2X$ is the commutator subgroup $[X,X]$ and $\Gamma_2\Gamma_2 X$ is
commutator subgroup of the commutator subgroup of $X$. Denote by $\Gamma_3/\Gamma_2 X$ the quotient
$\Gamma_2X/\Gamma_3X = [X,X]/[X,[X,X]]$. Notice that $\ker \pi=\Gamma_2 F_n$. The
map $\pi$ also induces a map $F_n/\Gamma_2\Gamma_2F_n \to  L$.

\begin{theorem}\label{the:compatible_action_free_group}
  Let $K$ be an arbitrary group acting on the lattice $L$. There is a compatible
  $K$-action on the Koszul resolution $P_*$ if and only if the $K$-action on $L$
  can be lifted to a $K$-action on $F_n/\Gamma_2\Gamma_2F_n$.
\end{theorem}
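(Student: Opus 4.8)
The plan is to package both sides of the equivalence in terms of the same object, namely a $\tau^k$-semilinear automorphism of the free $\IZ L$-module $P_1 = (\IZ L)^n$ that is compatible with the first Koszul differential $\partial_1 \colon P_1 \to P_0 = \IZ L$, $[i] \mapsto (0)-(e_i)$. Two structural facts carry the argument. First, the Koszul resolution is the exterior algebra $P_* = \Lambda^*_{\IZ L}(P_1)$ and its differential is contraction against $\partial_1 \in \hom_{\IZ L}(P_1,\IZ L)$; hence every $\tau^k$-semilinear automorphism $t$ of $P_1$ with $\partial_1 \circ t = \tau^k \circ \partial_1$ extends multiplicatively to a $\tau^k$-semilinear, augmentation-preserving chain automorphism $\Lambda^* t$ of $P_*$, and $\Lambda^*(t \circ t') = \Lambda^* t \circ \Lambda^* t'$, $\Lambda^*(\id) = \id$. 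Second, the Magnus embedding identifies $M_n := F_n/\Gamma_2\Gamma_2 F_n$ with the subgroup
\[
  \{(v,u) \ :\ v \in P_1,\ u \in L,\ \partial_1 v = (0) - (u)\}\ \subseteq\ P_1 \rtimes L
\]
(where $L$ acts on $P_1 = (\IZ L)^n$ diagonally through the regular representation), via $x_i \mapsto ([i],e_i)$; under this identification the abelianization $M_n \to L$ is $(v,u) \mapsto u$, with kernel $A = \Gamma_2 M_n$ corresponding to $\ker(\partial_1 \colon P_1 \to P_0)$, the abelianized relation module of the presentation $F_n \twoheadrightarrow L$.

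Granting these, the direction ``lift $\Rightarrow$ compatible action'' is quick. Let $k \mapsto \Phi_k$ be a homomorphism $K \to \aut(M_n)$ with $\Phi_k$ inducing $\rho^k$ on the abelianization $L$. In the Magnus picture write $\Phi_k(x_i) = (v_i^{(k)}, \rho^k e_i)$ — the second coordinate is forced because $\Phi_k$ lifts $\rho^k$ — and let $t_k \colon P_1 \to P_1$ be the $\tau^k$-semilinear map with $t_k([i]) = v_i^{(k)}$. One checks $\partial_1 \circ t_k = \tau^k \circ \partial_1$ (both sides are $\tau^k$-semilinear and agree on the basis $[i]$); therefore $(v,u) \mapsto (t_k v, \rho^k u)$ is an endomorphism of $P_1 \rtimes L$ preserving the subgroup $M_n$, so it restricts to an endomorphism of $M_n$ which agrees with $\Phi_k$ on the generators $x_i$ and hence equals $\Phi_k$. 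Consequently $t_k t_{k'} = t_{kk'}$ and $t_1 = \id$ follow from the corresponding relations among the $\Phi_k$, so in particular each $t_k$ is an automorphism; then $(\Lambda^* t_k)_{k \in K}$ is a compatible $K$-action on $P_*$.

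For ``compatible action $\Rightarrow$ lift'', start with a compatible $K$-action $(t_k)_{k \in K}$ on $P_*$ and first normalize it. The element $t_k((0)) \in \IZ L$ is a unit of augmentation $1$, hence equals $(\mu_k)$ with $\mu_{kk'} = \mu_k + \rho^k \mu_{k'}$ and $\mu_1 = 0$; replacing each $t_k$ by $(-\mu_k) \cdot t_k$ (multiplication by this group-ring unit in every degree) preserves every axiom of Definition~\ref{def:compatible_group_action} while achieving $t_k|_{P_0} = \tau^k$. Restricting the normalized $t_k$ to $P_1$ now gives $\tau^k$-semilinear automorphisms with $\partial_1 t_k = \tau^k \partial_1$, $t_k t_{k'} = t_{kk'}$, $t_1 = \id$; via the Magnus description, $\Phi_k \colon (v,u) \mapsto (t_k v, \rho^k u)$ is then a well-defined automorphism of $M_n$ lifting $\rho^k$, and $k \mapsto \Phi_k$ is a homomorphism lifting the given $K$-action on $L$.

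The part that needs care is the first structural fact together with the Magnus embedding and its matching with $P_*$: one must check that the map occurring in the Magnus matrices is literally the first Koszul differential $\partial_1$ and that $\Gamma_2 M_n$ is carried exactly onto $\ker \partial_1 = \operatorname{im}\partial_2$, and one must use the exterior-algebra structure of $P_*$ in its functorial form, since only the multiplicative extension $\Lambda^* t$ yields the strict relations $t_k t_{k'} = t_{kk'}$ demanded of a compatible action. Everything after that is formal manipulation with semilinear maps; the only mild subtlety is the normalization on $P_0$, handled above, and I note that no hypothesis on $K$ beyond its action on $L$ enters anywhere, in agreement with the statement.
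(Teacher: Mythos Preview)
Your argument is correct and follows the paper's route: both identify $F_n/\Gamma_2\Gamma_2 F_n$ with the subgroup $M=\{v\in P_1:\partial_1 v\in (0)-\iota(L)\}$ under the twisted product (the paper builds $M$ by hand and proves this via a Hurewicz argument in Lemmas~\ref{lem:M_generation}--\ref{lem:M_identification}, you invoke the Magnus embedding), then translate between $\tau^k$-semilinear maps on $P_1$ compatible with $\partial_1$ and automorphisms of $M$ over $L$ (the paper's Lemma~\ref{lem:M_bijection}), and finally extend to all of $P_*$ multiplicatively (the paper cites~\cite[Theorem~3.1]{Adem-Ge-Pan-Petrosyan(2008)} for this, you spell out the exterior-algebra functoriality). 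Your normalization on $P_0$ makes explicit a point the paper's proof passes over in applying Lemma~\ref{lem:M_bijection}.
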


For the proof we need some preparation. Let $\iota\colon L\hookrightarrow \IZ L=P_0$ be
the inclusion of sets given by $l\mapsto (0)-(l)$, and define the subset
$M\subset \IZ L^n=P_1$ to be $M=\partial^{-1}(\iota(L))$, so that we get a
commutative diagram
\[ \xymatrix{ M \ar@{^{(}->}[r] \ar@{->>}[d]_{a} & \IZ L^n \ar[d]^{\partial} \\
  L \ar@{^{(}->}[r]_{\iota} & \IZ L } \] We define a new monoid structure on $M$
by $m_1\diamond m_2 = m_1+m_2-(\partial m_1)\cdot m_2$. This element of $P_1$
indeed lies in the subset $M$, because if $\partial m_i = (0)-(l_i)$ then
\begin{align}\label{thmdifferentialhomo}
  \partial(m_1+m_2- (\partial m_1) \cdot m_2 ) &= \partial(m_1)+\partial(m_2)- \bigl( (0) - (l_1) \bigr) \cdot\partial(m_2) \\
  &= (0) - (l_1) + (l_1)((0)-(l_2)) = (0) - (l_1+l_2).  \nonumber
\end{align}
The composition $\diamond$ is associative, and $0\in P_1$ serves as unit element
of $M$. Equation~\eqref{thmdifferentialhomo} shows that $a\colon M\to L$ is a
homomorphism of monoids.

\begin{remark}[Geometric picture] \label{geometric_picture}
At this point, it might be helpful to have a geometric picture in
mind. Let $X$ be the CW-complex with a $0$-cell for every element of
$L$ and a $1$-cell joining $l$ and $l+e_i$ for every $l$ and every
$i=1,2,\dots,n$. One should think of $X$ as the ``grid'' in
$\mathbb{R}^n$. Then the cellular chain complex is given by $\IZ L$ in
dimension $0$, and $\IZ L^n$ in dimension $1$, where the $\IZ$-basis
element corresponding to $[i]l$ belongs to the $1$-cell from $l$ to $l+e_i$. 
Then the differential of the cellular chain complex is
exactly the differential $\partial\colon \IZ L^n \to \IZ L$ of the
Koszul complex. The elements of $M$ can then be thought of those
$1$-chains which can be written as a sum of cycles and a single path
joining $0$ and some $l\in L$. The function $a\colon M\to L$ returns
the endpoint $l$, and the equation $m_1\diamond m_2 = m_1 + m_2\cdot
(a(m_1))$ shows that the product $\diamond$ simply translates $m_2$ in
such a way that the two paths can be concatenated, the path of
$m_1\diamond m_2$ being the concatenation of the two paths. This makes
it clear that $a$ is a homomorphism of monoids.
\end{remark}

\begin{lemma}\label{lem:M_generation}
  The monoid $M$ is a group generated by the $\IZ L$-basis elements of $P_1=\IZ
  L^n$.
\end{lemma}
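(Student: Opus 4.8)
The plan is to work entirely through the monoid homomorphism $a\colon M\to L$. First I would record the convenient identity that for $m_1\in M$ with $\partial m_1=(0)-(l_1)$ and any $m_2\in P_1$ one has $m_1\diamond m_2=m_1+(l_1)\cdot m_2$. From this, $M$ is seen to be a group: given $m\in M$ with $a(m)=l$, the element $m':=-(-l)\cdot m\in P_1$ satisfies $\partial m'=-(-l)\bigl((0)-(l)\bigr)=(0)-(-l)$, so $m'\in M$ with $a(m')=-l$, and a one-line computation gives $m\diamond m'=m+(l)\cdot m'=m-m=0$ and likewise $m'\diamond m=0$; hence $(M,\diamond)$ is a group with $m^{-1}=-(-a(m))\cdot m$.

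Next I would let $N\le M$ be the subgroup generated by the $\IZ L$-basis elements $[1],\dots,[n]$ of $P_1$. Since $a$ is a homomorphism with $a([i])=e_i$ and the $e_i$ generate $L$, the restriction $a|_N\colon N\to L$ is surjective; so it suffices to prove $\ker(a\colon M\to L)\subseteq N$, because then for any $m\in M$ one picks $n_0\in N$ with $a(n_0)=a(m)$ and concludes $m\diamond n_0^{-1}\in\ker a\subseteq N$, whence $m\in N$. Now $\ker(a\colon M\to L)=\ker(\partial\colon P_1\to P_0)$, and on this subgroup $\diamond$ restricts to ordinary addition $+$ (since $\partial m_1=0$ there). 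By exactness of the Koszul resolution, $\ker\partial_1=\im\partial_2$, which as an abelian group is generated by the translated square boundaries $\partial_2\bigl((l)[i,j]\bigr)$ for $l\in L$ and $1\le i<j\le n$; so the lemma reduces to showing each of these lies in $N$.

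For $l=0$ a direct computation with the formula for $\diamond$ shows that $\partial_2([i,j])$ equals, up to an overall sign, the word $([i]\diamond[j])\diamond([j]\diamond[i])^{-1}$ --- precisely the boundary of the unit square at the origin in the picture of Remark~\ref{geometric_picture} --- so $\partial_2([i,j])\in N$. For general $l$ I would choose $m\in N$ with $a(m)=l$, available by the previous paragraph, and conjugate: since $a\bigl(\partial_2([i,j])\bigr)=0$, one computes $m\diamond\partial_2([i,j])\diamond m^{-1}=(l)\cdot\partial_2([i,j])=\partial_2\bigl((l)[i,j]\bigr)$, which geometrically just carries the square loop out to $l$ along the path recorded by $m$ and back again. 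This places every abelian generator of $\ker\partial_1$ inside $N$, and therefore $M=N$.

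\textbf{The main obstacle} is not conceptual but a matter of careful bookkeeping: checking that $\diamond$ genuinely restricts to $+$ on $\ker\partial_1$, pinning down the sign in $\partial_2([i,j])=\pm\bigl(([i]\diamond[j])\diamond([j]\diamond[i])^{-1}\bigr)$, and verifying that the conjugate $m\diamond\partial_2([i,j])\diamond m^{-1}$ equals the $\IZ L$-translate $(l)\cdot\partial_2([i,j])$ on the nose rather than something merely homologous to it. The geometric description in Remark~\ref{geometric_picture} --- elements of $M$ are ``a path from $0$ to some $l$ together with cycles'' and $\diamond$ concatenates paths --- is the reliable guide through all of these verifications.
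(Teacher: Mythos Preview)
Your proposal is correct and follows essentially the same route as the paper's proof: reduce to $\ker a$, observe that $\diamond$ restricts to $+$ there, use exactness of the Koszul complex to reduce to the elements $(l)\cdot\partial[i,j]$, express $\partial[i,j]$ as a commutator of the generators, and obtain the general $(l)$-translate by conjugation. The only cosmetic difference is that you first establish the group property of $M$ via the closed formula $m^{-1}=-(-a(m))\cdot m$, whereas the paper carries explicit inverses $\bar a_i$ and $\bar\gamma_l$ throughout; your version is marginally cleaner for that reason, but the two arguments are otherwise the same (indeed the paper's condition $\gamma_l+l\cdot\bar\gamma_l=0$ is exactly your inverse formula).
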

\begin{proof}
  Let $a_i=[i]$ be the $\IZ L$-basis elements of $P_1$, and define the elements
  $\bar{a}_i=-(-e_i) \cdot [i]$ (with $i=1,2,\dots,n$). In our geometric picture
  $a_i$ and $\bar{a}_i$ correspond to paths from $0$ to $e_i$ and $-e_i$,
  respectively. Note that $a_i\diamond \bar{a}_i=\bar{a}_i\diamond a_i=0$, and
  define $T$ to be the submonoid of $M$ generated by all these elements. We
  first of all prove that for every $l\in L$, there are elements $\gamma_l,
  \bar{\gamma}_l\in T$ with $a(\gamma_l)=l$, $\gamma_l\diamond
  \bar{\gamma}_l=\bar{\gamma}_l\diamond \gamma_l=0$ and
  $\gamma_l+l\cdot \bar{\gamma_l}=0\in P_1$. This is true for $l=e_i$ (because
  we can then take $a_i$ and $\bar{a}_i$), and if it is true for $l,k\in L$,
  then $\gamma_{k+l}=\gamma_l\diamond \gamma_k$ and $\bar{\gamma}_{k+l} =
  \bar{\gamma}_k\diamond \bar{\gamma}_l$ do the job. Geometrically, we have
  shown so far that for every $l\in L$ there is a path $\gamma_l$ in $T$ joining
  $0$ and $l$, and the reverse path $\bar{\gamma}_l$ also belongs to $T$.

  Now let $s\in M$; we want to show that $s\in T$. By passing to $s\diamond
  \bar{\gamma}_{a(s)}$, we can assume that $a(s)=0$. Geometrically this means
  that we close the path of $s$ by joining the endpoint $a(s)$ with our chosen
  path $\gamma$; then we obtain a new element consisting of cycles only.

  But restricted to $a^{-1}(0) = \ker(\partial \colon P_1\to P_0)
  = \partial(P_2)$, $\diamond$ is just the ordinary addition in $P_1$, so it is
  enough to prove $s\in T$ for the elements
  \[ s=l\cdot \partial[i,j] = l[j] - (e_i+l)[j] - l[i] + (e_j+l)[i] \quad\text{with
    $i<j$ and $l\in L$.} \] 
  But $\gamma_l \diamond t \diamond \bar{\gamma}_l = l\cdot t\in P_1$ for 
  every $t\in M$ with $a(t)=0$, so if we prove that
  $\partial[i,j] = a_j\diamond a_i\diamond \bar{a}_j\diamond \bar{a}_i$ then we
  are done. In the space $X$, both sides correspond to the path running around
  the unit square in $e_i\times e_j$-direction, but let us give a formal
  proof. Using $a(\bar{a}_j)=-e_j$, $a(a_j)=e_j$ and $a(a_i)=e_i$ we get
  successively
  \begin{align*}
    \bar{a}_j \diamond \bar{a}_i &= \bar{a}_j + (-e_j) \cdot \bar{a}_i \\
    a_i \diamond (\bar{a}_j \diamond \bar{a}_i) &= a_i + (e_i) \bigl( \bar{a}_j + (-e_j)\cdot\bar{a}_i  \bigr)  \\
    a_j \diamond (a_i \diamond (\bar{a}_j \diamond \bar{a}_i)) &= a_j + (e_j)\biggl( a_i + (e_i) \bigl( \bar{a}_j +  (-e_j)\cdot \bar{a}_i \bigr)  \biggr)  \\
    &= a_j + (e_j)a_i + (e_i+e_j)\bar{a}_j + (e_i)\bar{a}_i \\
    &= [j] + (e_j)[i] - (e_i)[j] - [i].
  \end{align*}
  The last expression agrees with $\partial[i,j]$.
\end{proof}

\begin{lemma}\label{lem:M_bijection}
  For every element $k\in K$, the inclusions of sets $M\hookrightarrow \IZ L^n$
  and $L\hookrightarrow \IZ L$ induce a bijection of commutative diagrams
\[ 
\left\{
 \text{of groups} \quad
 \begin{array}[c]{c}  \xymatrix{ M \ar[r] \ar[d]_{a} & M  \ar[d]_{a} \\ L \ar[r]_{\rho^k} & L } \end{array}  
\right\}
 \stackrel{1:1}{\longleftrightarrow}
\left\{
 \begin{array}[c]{c} \xymatrix{ \IZ L^n \ar[r]\ar[d]_{\partial} & (\tau^k)^* \IZ L^n \ar[d]_{\partial} \\ \IZ L \ar[r]_-{\tau^k} & (\tau^k)^* \IZ L } \end{array} 
  \quad
\text{of $\IZ L$-modules} \right\}
 \]
\end{lemma}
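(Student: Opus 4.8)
The plan is to notice that in each of the two diagrams everything except the top horizontal arrow is already fixed, so that the left-hand collection is precisely the set of group homomorphisms $\phi\colon M\to M$ for the operation $\diamond$ satisfying $a\circ\phi=\rho^k\circ a$, while the right-hand collection is the set of $\IZ L$-linear maps $\psi\colon P_1\to(\tau^k)^*P_1$ satisfying $\partial\circ\psi=\tau^k\circ\partial$. Here $\IZ L$-linearity of such a $\psi$ just says that, as a self-map of the underlying abelian group $P_1=\IZ L^n$, it is $\tau^k$-semilinear, i.e.\ $\psi(\lambda\cdot x)=\tau^k(\lambda)\cdot\psi(x)$ for $\lambda\in\IZ L$. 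The correspondence I would write down is: send $\psi$ to its restriction $\psi|_M$ in one direction, and send $\phi$ to the $\tau^k$-semilinear extension of $[i]\mapsto\phi([i])$ on the free $\IZ L$-module $P_1$ with basis $[1],\dots,[n]$ in the other.

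First I would check that restriction is well defined. If $\psi$ satisfies $\partial\psi=\tau^k\partial$ and $m\in M$, write $\partial m=\iota(l)$ with $l=a(m)$; then $\partial\psi(m)=\tau^k\iota(l)=\iota(\rho^k l)$, so $\psi(m)\in M$ and $a(\psi(m))=\rho^k(a(m))$, which is commutativity of the left square. Since $\diamond$ is built out of the addition of $P_1$ and scalar multiplication by the element $\partial m_1\in\IZ L$, additivity and $\tau^k$-semilinearity of $\psi$ together with $\partial\psi=\tau^k\partial$ give $\psi(m_1\diamond m_2)=\psi(m_1)+\psi(m_2)-\bigl(\partial\psi(m_1)\bigr)\cdot\psi(m_2)=\psi(m_1)\diamond\psi(m_2)$; a monoid homomorphism between groups is a group homomorphism, so $\psi|_M$ indeed lies in the left-hand set.

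For the reverse direction, given $\phi$ I would first note that each $[i]$ lies in $M$ since $\partial[i]=(0)-(e_i)=\iota(e_i)$, so the $\tau^k$-semilinear extension $\psi$ of $[i]\mapsto\phi([i])$ is well defined on the free module $P_1$; the identity $\partial\psi=\tau^k\partial$ then only has to be checked on the generators $[i]$ (both sides being $\tau^k$-semilinear), where it reads $\partial\phi([i])=\iota\bigl(a(\phi[i])\bigr)=\iota\bigl(\rho^k(a[i])\bigr)=\iota(\rho^k e_i)=\tau^k\partial[i]$. The previous paragraph now applies to this $\psi$, so $\psi|_M$ is a group homomorphism over $\rho^k$ agreeing with $\phi$ on the group-generating set $\{[1],\dots,[n]\}$ of $M$ furnished by Lemma~\ref{lem:M_generation}; hence $\psi|_M=\phi$. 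Conversely, starting from $\psi$ and restricting, the resulting map sends $[i]\mapsto\psi([i])$, so its $\tau^k$-semilinear extension is $\psi$ itself by uniqueness of such extensions. Thus the two assignments are mutually inverse, which proves the $1:1$ correspondence. No step is a genuine obstacle: the only non-formal input is Lemma~\ref{lem:M_generation}, and the one point that requires care is consistency of the restriction-of-scalars/semilinearity conventions throughout, in particular in the verification that the restriction of $\psi$ is multiplicative for $\diamond$.
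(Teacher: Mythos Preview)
Your proof is correct and follows essentially the same approach as the paper: the paper defines the restriction map $\Psi$ from right to left, verifies it lands in group homomorphisms via the same $\diamond$-computation you give, and then checks injectivity and surjectivity (the latter via Lemma~\ref{lem:M_generation}), whereas you additionally write down the inverse explicitly and check both compositions are the identity. The only input beyond formal manipulation in either version is Lemma~\ref{lem:M_generation}, exactly as you identify.
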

\begin{proof}
  First of all, we show that restriction along the inclusions gives us a
  well-defined map $\Psi$ from the right to the left. Let us start with a map
  $f\colon \IZ L^n \to (\tau^k)^* \IZ L^n$ such that 
  $\partial f =  \tau^{k} \partial$. If $m\in M$, then 
  $\partial f(m) = \tau^{k} \partial m =   \tau^{k} \bigl((0) - (a(m))\bigr) = (0)-(\rho^{k} a(m))$, 
  and therefore  $f(m)\in M$, and $a(f(m)) = \rho^{k}a(m)$ so that we get a commutative square
  as desired. The restriction $f'$ of $f$ is indeed a group homomorphism:
  \begin{align*}
    f'(m_1)\diamond f'(m_2) &= f(m_1)+f(m_2) - (\partial f(m_1))\cdot f(m_2)  \\
    &= f(m_1)+f(m_2) - ( \tau^{k}(\partial m_1)) ) \cdot f(m_2) \\
    &= f(m_1 + m_2 - (\partial m_1) \cdot m_2) = f'(m_1\diamond m_2).
  \end{align*}
  $\Psi$ is injective because any two different $f$ differ at some $\IZ L$-basis
  element $[j]$ which belongs to $M$, so that the restriction $M\to M$
  still sees the difference. $\Psi$ is surjective because given $f'\colon M\to   M$, 
  we can define $f$ on basis elements by $[j]\mapsto f'([j])$ and get a
  commutative diagram as needed; then $f'$ is the restriction of $f$ because of
  Lemma~\ref{lem:M_generation}.
\end{proof}

We still have to identify the group $M$.
\begin{lemma}\label{lem:M_identification}
  The surjective map $F_n\to M$ sending the generator $x_i$ to the generator
  $a_i$ has kernel $\Gamma_2\Gamma_2 F_n$.
\end{lemma}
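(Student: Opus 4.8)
The plan is to recognise the surjection $\phi\colon F_n\to M$, $x_i\mapsto a_i$, as the Hurewicz homomorphism of the covering space of Remark~\ref{geometric_picture}, and thereby reduce the computation of $\ker\phi$ to the elementary fact that the abelianisation map of a group $H$ has kernel $\Gamma_2 H$.

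First, $a\circ\phi=\pi$, since both homomorphisms send $x_i$ to $e_i$. Because $\ker\pi=\Gamma_2 F_n$, every $w\in\ker\phi$ satisfies $\pi(w)=a(\phi(w))=0$, so $\ker\phi\subseteq\Gamma_2 F_n$ and hence $\ker\phi=\ker\bigl(\phi|_{\Gamma_2 F_n}\bigr)$. As $\phi$ is onto by Lemma~\ref{lem:M_generation}, and any $w$ with $\phi(w)\in\ker a$ automatically lies in $\Gamma_2 F_n$, the restriction $\phi|_{\Gamma_2 F_n}\colon\Gamma_2 F_n\to\ker a$ is surjective; recall moreover from the proof of Lemma~\ref{lem:M_generation} that $\ker a=a^{-1}(0)=\ker(\partial\colon P_1\to P_0)$ and that on this set $\diamond$ is ordinary addition in $P_1$, so $\ker a$ is a free abelian group. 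It therefore suffices to show that $\phi|_{\Gamma_2 F_n}$ is the abelianisation homomorphism of the free group $\Gamma_2 F_n$ onto $\ker a\cong(\Gamma_2 F_n)^{\mathrm{ab}}$: then $\ker\phi=\ker(\phi|_{\Gamma_2 F_n})=\Gamma_2(\Gamma_2 F_n)=\Gamma_2\Gamma_2 F_n$, as claimed.

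To identify $\phi|_{\Gamma_2 F_n}$, let $X=\bigvee_{i=1}^{n}S^1$, so that $\pi_1(X)=F_n$, and let $\widehat X\to X$ be the connected cover corresponding to the normal subgroup $\Gamma_2 F_n$ of $F_n$; its deck group is $L$ and $\pi_1(\widehat X)=\Gamma_2 F_n$. By the description in Remark~\ref{geometric_picture}, $\widehat X$ has a $0$-cell for every $l\in L$ and a $1$-cell joining $l$ and $l+e_i$ for every $l$ and every $i$, and its cellular chain complex is the truncated Koszul complex $P_1\xrightarrow{\partial}P_0$ up to an overall sign. Since $X$ is a graph, $\widehat X$ is a $K(\Gamma_2 F_n,1)$, so the Hurewicz map is the abelianisation $\Gamma_2 F_n=\pi_1(\widehat X)\to H_1(\widehat X)=\ker(\partial\colon P_1\to P_0)=\ker a$. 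Finally, unwinding the definition of $\phi$: the lift of the loop represented by a word $w$, started at the base vertex $0\in L$, traces out a path in $\widehat X$, and the formula $m_1\diamond m_2=m_1+a(m_1)\cdot m_2$ (see Remark~\ref{geometric_picture}) is precisely the rule for concatenating two such paths and adding the corresponding $1$-chains. Hence for $w\in\Gamma_2 F_n$ the element $\phi(w)\in\ker a$ is the $1$-cycle of the loop $w\in\pi_1(\widehat X,0)$, i.e. its Hurewicz image, so $\phi|_{\Gamma_2 F_n}$ is the abelianisation map and the argument is complete.

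The step I expect to be delicate is the last one: justifying on the nose that the algebraically defined $\phi$ (built from the operation $\diamond$) agrees with the topological Hurewicz homomorphism under the above identifications, in particular matching the orientation conventions for $a_i$ and $\bar a_i$ with the oriented $1$-cells of $\widehat X$. An alternative, purely algebraic route avoids this: the identity $m_1\diamond m_2=m_1+a(m_1)\cdot m_2$ is formally the Fox product rule $\partial(w_1w_2)/\partial x_i=\partial w_1/\partial x_i+\overline{w_1}\cdot\partial w_2/\partial x_i$, so $\phi$ is the Magnus--Fox homomorphism into $(\IZ L)^n\rtimes L$, and the classical Magnus embedding theorem (that $F_n$ modulo its second derived subgroup embeds in such a group) identifies $\ker\phi$ with $\Gamma_2\Gamma_2 F_n$ directly. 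Either way, the essential content is the recognition of $\phi$ as this well-known map.
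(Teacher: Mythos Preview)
Your proposal is correct and follows essentially the same approach as the paper: both arguments reduce to identifying the restriction $\phi|_{\Gamma_2 F_n}\colon\Gamma_2 F_n\to\ker a$ with the Hurewicz map of the grid complex $X$ of Remark~\ref{geometric_picture}, viewed as the abelian cover of $\bigvee_{i=1}^n S^1$, whence $\ker\phi=\Gamma_2\pi_1(X)=\Gamma_2\Gamma_2 F_n$. The paper's version is terser---it simply asserts $\pi_1(X)=\Gamma_2 F_n$ and that the map in question is Hurewicz---whereas you spell out the covering-space identification and flag the orientation/compatibility check; your aside on the Fox--Magnus embedding is a genuine alternative route not taken in the paper.
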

\begin{proof}
  Denote the kernel in question by $N$, and let $U$ be the kernel of the
  surjective map $M\xrightarrow{a} L$; this is the same as the kernel of
  $\partial \colon \IZ L^n\to  \IZ L$. We get a commutative diagram
  \[ 
  \xymatrix{ N \ar[r] \ar@{=}[d] & \Gamma_2 F_n \ar[r]\ar[d] & U\ar[d] \\ N
    \ar[r] & F_n \ar[r]\ar[d] & M \ar[d]^a \\ & L\ar@{=}[r] & L } 
  \] 
  We need to find the kernel of the map $\Gamma_2 F_n\to U$. Let $X$ be the
  CW-complex from Remark~\ref{geometric_picture}. In the cellular chain complex $\IZ L^n
  \xrightarrow{\partial} \IZ L$, $U$ agrees with the $1$-cycles, and since there
  are no $2$-cells, we get $U=H_1(X)$. Furthermore, taking $0\in L\subset X$ as
  basepoint, we have $\pi_1(X)=\Gamma_2 F_n$, and the map $\pi_1(X)=\Gamma_2
  F_n\to U=H_1(X)$ is the Hurewicz map. Therefore, $N=\Gamma_2
  \pi_1(X)=\Gamma_2\Gamma_2 F_n$.
\end{proof}

\begin{proof}[Proof of Theorem~\ref{the:compatible_action_free_group}]
  If there is a compatible action of $K$ on the Koszul complex, we in particular
  get maps $f_k\colon P_1\to  (\tau^k)^* P_1$ for every $k\in K$, satisfying
  $f_{kl}=f_kf_l$ for all $k,l\in K$. Then Lemma~\ref{lem:M_bijection} tells us
  that the $K$-action on $L$ lifts to a $K$-action on $M$. Conversely, given a
  $K$-action on $M$, the same Lemma provides us with compatible maps $f_k$, and
  we get a compatible action by~\cite[Theorem~3.1]{Adem-Ge-Pan-Petrosyan(2008)}.
\end{proof}

\begin{corollary}
  If  the map $\rho\colon L\to  L$ given by multiplication with a generator of $G \cong \IZ/m$
  can be lifted to a map $f\colon F_n\to  F_n$
  such that $f^m=1$, then there is a compatible $G$-action on $P_*$.
\end{corollary}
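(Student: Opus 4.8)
The plan is to deduce this immediately from Theorem~\ref{the:compatible_action_free_group} applied with $K$ the cyclic group $G$. Given the lift $f\colon F_n\to F_n$ of $\rho$ with $f^m=1$, the first step is to pass to the quotient $F_n/\Gamma_2\Gamma_2 F_n$. Since $\Gamma_2\Gamma_2 F_n=[[F_n,F_n],[F_n,F_n]]$ is a verbal, hence fully invariant, subgroup of $F_n$, every endomorphism of $F_n$ carries it into itself; in particular $f(\Gamma_2\Gamma_2 F_n)\subseteq \Gamma_2\Gamma_2 F_n$, so $f$ descends to an endomorphism $\overline{f}\colon F_n/\Gamma_2\Gamma_2 F_n\to F_n/\Gamma_2\Gamma_2 F_n$.

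Next, $\overline{f}^{\,m}=\overline{f^m}=\overline{1}=\id$, so $\overline{f}$ is in fact an automorphism (with inverse $\overline{f}^{\,m-1}$), and the assignment $t^i\mapsto \overline{f}^{\,i}$ is a well-defined homomorphism from $G$ to $\aut(F_n/\Gamma_2\Gamma_2 F_n)$, i.e.\ a $G$-action on $F_n/\Gamma_2\Gamma_2 F_n$. Because $f$ was chosen to lift $\rho$, meaning $\pi\circ f=\rho\circ\pi$ for the canonical surjection $\pi\colon F_n\to L$ (which factors through $F_n/\Gamma_2\Gamma_2 F_n$, as $\ker\pi=\Gamma_2 F_n\supseteq \Gamma_2\Gamma_2 F_n$), the induced map $F_n/\Gamma_2\Gamma_2 F_n\to L$ intertwines $\overline{f}$ with $\rho$, so this $G$-action on $F_n/\Gamma_2\Gamma_2 F_n$ is a lift of the given $G$-action on $L$.

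Finally, Theorem~\ref{the:compatible_action_free_group} (with its $K$ taken to be the cyclic group $G$) produces from such a lift a compatible $G$-action on the Koszul resolution $P_*$, which is exactly the assertion. There is no genuine obstacle here: the only things to verify are the full invariance of $\Gamma_2\Gamma_2 F_n$ under $f$ and the $\overline{f}$-equivariance of the projection to $L$, both routine, while the substantive work has already been absorbed into Theorem~\ref{the:compatible_action_free_group}.
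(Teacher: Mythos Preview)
Your proof is correct and matches the paper's intent: the corollary is stated without proof immediately after Theorem~\ref{the:compatible_action_free_group}, and your argument supplies exactly the routine verifications (full invariance of $\Gamma_2\Gamma_2 F_n$, well-definedness of the $G$-action via $\overline{f}$, and compatibility with the projection to $L$) that the paper leaves implicit.
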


\begin{example}[Permutation modules]\label{exa:permutation_modules}
  Suppose that $L$ is a permutation module, so there is a $\IZ$-basis
  $e_1,\dots,e_n$ and $\rho$ acts as $e_i\mapsto e_{\sigma(i)}$ for some
  permutation $\sigma$. Then $\rho$ can be lifted by defining $f(x_i) =
  x_{\sigma(i)}$. This yields the compatible action described 
  in~\cite[Theorem~3.2]{Adem-Ge-Pan-Petrosyan(2008)}.
\end{example}

\begin{example}[Syzygies of permutation modules]
\label{exa:Syzygies_of_permutation_modules}
  Let $d$ be a divisor of $m$, and put $L = \IZ G / (1+t^d+t^{2d}+\dots +
  t^{m-d})$. Then $L$ has a $\IZ$-basis $e_0,e_1,\dots,e_{m-d-1}$, and $\rho$
  acts as follows:
  \begin{align*}
    e_i &\mapsto e_{i+1} &&\text{for $i=0,\dots,m-d-2$} \\
    e_{m-d-1} &\mapsto -e_0-e_d-e_{2d}-\dots-e_{m-2d}
  \end{align*}
  We can lift this action to $F_n$ by defining $f(x_i)=x_{i+1}$ for $i=0,1,\dots
  m-d-2$, and $f(x_{m-d-1}) = x_0^{-1}x_d^{-1}\dots x_{m-2d}^{-1}$. In order to
  check that $f^m=1$, note that
  \[ f^d(x_{m-d-1})) = f^{d-1}(f(x_{m-d-1}) = f^{d-1}(x_0^{-1}x_d^{-1}\dots
  x_{m-2d}^{-1}) = x_{d-1}^{-1}x_{2d-1}^{-1}\dots x_{m-d-1}^{-1}, \] and so
  $f^{d+1}(x_{m-d-1}) = x_d^{-1}x_{2d}^{-1}\dots x_{m-2d}^{-1} f(x_{m-d-1}^{-1})
  = x_0$, which implies $f^m(x_0)=x_0$. 

  For $d=1$ we get the augmentation ideal $L=I\subset \IZ(\IZ G)$, which was
  dealt with in~\cite[Proposition~3.3]{Adem-Ge-Pan-Petrosyan(2008)}.

\end{example}

We record the for us main important example in

\begin{lemma}\label{lem:Z[zeta]_has_compatible_action}
In the case $L = \IZ[\zeta]$ there exists a compatible group action on the Koszul resolution.
\end{lemma}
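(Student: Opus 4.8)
The plan is to exhibit, for $L = \IZ[\zeta]$ with $\zeta = \exp(2\pi i/p^r)$ (or more generally $\zeta$ a primitive $m$-th root of unity), an explicit lift of the multiplication-by-$\zeta$ automorphism $\rho\colon L\to L$ to an automorphism $f\colon F_n\to F_n$ of the free group on $n = \deg$ generators with $f^m = 1$, and then invoke Theorem~\ref{the:compatible_action_free_group} (or rather its Corollary). The key observation is that $L = \IZ[\zeta] = \IZ G/(1 + t + t^2 + \dots + t^{m-1})$ when $m = p$ is prime, and more generally $\IZ[\zeta]$ is the quotient of $\IZ G$ by the ideal generated by the cyclotomic-type element $\Phi$, so that $L$ looks very much like one of the syzygy-of-permutation modules already treated in Example~\ref{exa:Syzygies_of_permutation_modules}. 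In fact for $m = p^r$ one has $\IZ[\zeta] \cong \IZ G/(1 + t^{p^{r-1}} + t^{2p^{r-1}} + \dots + t^{(p-1)p^{r-1}})\cdot\IZ G$ by the isomorphism~\eqref{ZG/T_and_Z[zeta]}, which is exactly the module $\IZ G/(1 + t^d + \dots + t^{m-d})$ of Example~\ref{exa:Syzygies_of_permutation_modules} with $d = p^{r-1}$.

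First I would identify, via the ring isomorphism $\overline{\pr}$ of~\eqref{ZG/T_and_Z[zeta]}, the $\IZ G$-module $\IZ[\zeta]$ with $\IZ G/T\cdot\IZ G$ where $T = 1 + t^{p^{r-1}} + \dots + t^{(p-1)p^{r-1}}$. This is precisely the module $L$ of Example~\ref{exa:Syzygies_of_permutation_modules} for the divisor $d = p^{r-1}$ of $m = p^r$: it has $\IZ$-basis $e_0, e_1, \dots, e_{m-d-1}$ with $\rho(e_i) = e_{i+1}$ for $i < m-d-1$ and $\rho(e_{m-d-1}) = -e_0 - e_d - \dots - e_{m-2d}$. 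Then I would directly quote the lift $f\colon F_n\to F_n$, $n = m-d$, constructed there: $f(x_i) = x_{i+1}$ for $i = 0,\dots,m-d-2$ and $f(x_{m-d-1}) = x_0^{-1}x_d^{-1}\cdots x_{m-2d}^{-1}$, together with the verification carried out in that Example that $f^m = 1$. By the Corollary to Theorem~\ref{the:compatible_action_free_group}, since $\rho$ lifts to $f$ with $f^m = 1$, there is a compatible $G$-action on the Koszul resolution $P_*$.

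The only genuine step requiring care is matching up conventions: one must check that the basis of $\IZ[\zeta]$ coming from the powers $1, \zeta, \dots, \zeta^{m-d-1}$ under $\overline{\pr}$ really does carry the $\rho$-action in the normal form used in Example~\ref{exa:Syzygies_of_permutation_modules}, i.e.\ that $\zeta^{m-d} = -1 - \zeta^d - \zeta^{2d} - \dots - \zeta^{m-2d}$ in $\IZ[\zeta]$, which is immediate from $T\cdot 1 = 0$ in $\IZ G/T\cdot\IZ G$ and the fact that $\{1, t, \dots, t^{m-d-1}\}$ projects to a $\IZ$-basis. For general $m$ one reduces to the prime-power case as in Lemma~\ref{lem:reduction_to_prime}, or observes that the cyclotomic polynomial $\Phi_m$ furnishes the analogous normal form directly; since the remaining statements of the paper only use $L = \IZ[\zeta]$ through the prime-power reduction, restricting to $m = p^r$ suffices. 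I expect no real obstacle here — the work was already done in Example~\ref{exa:Syzygies_of_permutation_modules}, and this lemma is essentially a bookkeeping corollary recording that the module relevant to Theorem~\ref{the:Tate_cohomology} falls into that family.
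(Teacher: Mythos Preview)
Your proposal is correct and follows exactly the paper's own proof, which consists of the single sentence ``This follows from Example~\ref{exa:Syzygies_of_permutation_modules} and~\eqref{ZG/T_and_Z[zeta]}.'' You have correctly identified that $\IZ[\zeta]\cong \IZ G/T\cdot\IZ G$ via~\eqref{ZG/T_and_Z[zeta]} is precisely the module of Example~\ref{exa:Syzygies_of_permutation_modules} with $d=p^{r-1}$, and the extra convention-checking you spell out is harmless elaboration.
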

\begin{proof}
This follows from Example~\ref{exa:Syzygies_of_permutation_modules}  and~\eqref{ZG/T_and_Z[zeta]}.
\end{proof}

The approach via free groups also provides us with a technique for computing the
cohomology class $[\alpha_1]$.
\begin{lemma} \label{lem:computation_of_alpha_1}
Let $G=\IZ / m \IZ$ be a cyclic group acting on the lattice $L$.
  If the homomorphism of groups $f\colon F_n\to  F_n$ is a lift of
  $\rho^{-1}\colon L\to  L$, then the map of sets $\varphi\colon F_n\to  F_n$,
  $x\mapsto f^m(x)x^{-1}$ induces a commutative diagram of group homomorphisms
  \[ \xymatrix{ \Gamma_2/\Gamma_3 F_n \ar@{^{(}->}[r] \ar[d]_0 & F_n/\Gamma_3 F_n \ar[r]\ar[d]_{\varphi} & L \ar[d]_0 \\
    \Gamma_2/\Gamma_3 F_n \ar@{^{(}->}[r] & F_n/\Gamma_3 F_n \ar[r] & L } \] and
  the $\IZ$-dual of the connecting homomorphism of the snake lemma $L\to 
  \Gamma_2/\Gamma_3 F_n \cong \Lambda^2 L$ yields the cohomology class
  $-[\alpha_1]$.
\end{lemma}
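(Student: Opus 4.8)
The plan is to identify the snake-lemma connecting homomorphism $\delta\colon L\to\Gamma_2/\Gamma_3 F_n$ with the $\IZ$-dual $D(\alpha_1)\colon H_1(L)\to H_2(L)$ of $\alpha_1$, which by the proof of Lemma~\ref{lem:alpha_and_v_agree} is the map $\id_\IZ\otimes_{\IZ L}y$ coming from the chain homotopy $y$ with $dy=z^m-1$.

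First I would dispose of the formal part. As $f$ lifts $\rho^{-1}$, the endomorphism $f^m$ lifts $\rho^{-m}=\id_L$, so it induces the identity on $F_n/\Gamma_2 F_n=L$ and hence also the identity $\Lambda^2(\id)$ on $\Gamma_2/\Gamma_3 F_n\cong\Lambda^2 L$. Since $\Gamma_2/\Gamma_3 F_n$ is central in $F_n/\Gamma_3 F_n$, writing $f^m(y)=y\cdot w_y$ with $w_y$ central gives $\varphi(xy)=f^m(x)f^m(y)(xy)^{-1}=\varphi(x)\varphi(y)$ modulo $\Gamma_3$, so $\varphi$ is a group homomorphism on $F_n/\Gamma_3 F_n$; it kills $\Gamma_2/\Gamma_3 F_n$ (by the previous sentence) and maps into $\Gamma_2/\Gamma_3 F_n$ (because $\varphi(x)\equiv 0$ in $L$). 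This is precisely the asserted commutative diagram with zero outer verticals, and the snake lemma gives $\delta(\pi(x))=[\,f^m(x)x^{-1}\,]$.

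The heart of the matter is to transport this into the group $M=F_n/\Gamma_2\Gamma_2 F_n\subset P_1=\IZ L^n$ of Lemma~\ref{lem:M_identification}. With $U=\ker(\partial_1)=\partial(P_2)$, the proof of Lemma~\ref{lem:M_identification} identifies $U$ with the image of $\Gamma_2 F_n$ in $M$; and the commutator computation in the proof of Lemma~\ref{lem:M_generation} (conjugation by a lift of $l\in L$ acts on $U$ as multiplication by $(l)\in\IZ L$) shows that the image of $\Gamma_3 F_n$ is exactly $I\cdot U$ for $I$ the augmentation ideal. Hence $\Gamma_2/\Gamma_3 F_n\cong U/IU=\IZ\otimes_{\IZ L}U$, and since $\IZ\otimes_{\IZ L}-$ kills the Koszul differential, $\IZ\otimes\partial_2\colon\Lambda^2 L=\IZ\otimes P_2\xrightarrow{\cong}\IZ\otimes U$ realizes the identification $\Gamma_2/\Gamma_3 F_n\cong\Lambda^2 L$ up to the sign relating $e_i\wedge e_j$ to the commutator $[a_i,a_j]$. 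Given $f$, I would then choose $z\colon\tau^*P_*\to P_*$ in the definition of $[\alpha_s]$ so that $z_1\colon\tau^*P_1\to P_1$ is the $\tau^{-1}$-semilinear map corresponding to $\bar f\colon M\to M$ under Lemma~\ref{lem:M_bijection} (legitimate, since $z_1$ together with $z_0=\tau^{-1}$ extends to a chain map lifting $\id_\IZ$ by projectivity and $[\alpha_s]$ is choice-independent), and I would take $y_0=0$, which is possible because $z^m_0=\tau^{-m}=\id$. Then $z^m_1$ is $\IZ L$-linear, $\partial z^m_1=\partial$, $z^m_1|_M=\bar f^m$, and the chain homotopy equation reads $(z^m_1-1)([i])=\partial_2(y_1([i]))$.

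Finally, a short $\diamond$-computation finishes it: the image of $f^m(x_i)x_i^{-1}$ in $M$ is $\bar f^m([i])\diamond\overline{[i]}^{-1}=z^m_1([i])\diamond\bar a_i$, and expanding via $m_1\diamond m_2=m_1+m_2-(\partial m_1)\cdot m_2$ together with $\partial z^m_1([i])=(0)-(e_i)$ and $\bar a_i=-(-e_i)[i]$ collapses this to $(z^m_1-1)([i])=\partial_2(y_1([i]))\in U$. Under $\Gamma_2/\Gamma_3 F_n=U/IU\cong\IZ\otimes P_2$ this is the class of $y_1([i])$ in $H_2(L)$, i.e.\ $D(\alpha_1)(e_i)$ by Lemma~\ref{lem:alpha_and_v_agree}; the commutator sign turns this into $\delta=-D(\alpha_1)$, whence the $\IZ$-dual of $\delta$ is $-\alpha_1$ and the associated $\Ext^2_{\IZ G}$-class, formed exactly as for $[\alpha_1]$, is $-[\alpha_1]$. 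I expect the main obstacle to be the third paragraph: making Lemma~\ref{lem:M_bijection} interface cleanly with the degree-one part of $z$ (so that the free-group map $\varphi$ and the homotopy $y$ genuinely see the same object), and then keeping track of the two independent sign conventions — the one in $\Gamma_2/\Gamma_3 F_n\cong\Lambda^2 L$ via commutators, and the one implicit in the definitions of $z$, $y$ and $D$ — carefully enough to land on $-[\alpha_1]$ rather than $+[\alpha_1]$.
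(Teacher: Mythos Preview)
Your proposal is correct and follows essentially the same route as the paper's proof. The only packaging difference is that you pass to the quotient $U/IU=\IZ\otimes_{\IZ L}U$ and invert $\IZ\otimes\partial_2$, whereas the paper defines the map $v\colon U\to\Lambda^2 L$ by the factorization $P_2\xrightarrow{\partial}U\xrightarrow{v}\Lambda^2 L$ of $\epsilon\otimes\id$; since $v$ is $\IZ L$-linear with trivial target action it factors through $U/IU$ and is precisely the inverse of your $\IZ\otimes\partial_2$, so the two arguments coincide, including the sign computation $p([x_i,x_j])=-\partial[i,j]$ that produces the minus in $-[\alpha_1]$.
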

\begin{proof}
  As a map $\varphi\colon F_n\to  F_n/\Gamma_3 F_n$ we have for $x,y\in F_n$
  \begin{align*}
    \varphi(xy) &= f^m(x)f^m(y)y^{-1}x^{-1} \\
    &= f^m(x)x^{-1} [x,f^m(y)y^{-1}] f^m(y)y^{-1} \\
    &\in f^m(x)x^{-1} f^m(y)y^{-1} \cdot \Gamma_3 F_n.
  \end{align*}
  Therefore, $\varphi$ is a group homomorphism $F_n\to  F_n/\Gamma_3
  F_n$. Furthermore, if $y$ is in $\Gamma_3F_n$ then so is $f^m(y)y^{-1}$, so
  $\varphi$ induces a homomorphism $F_n/\Gamma_3F_n\to  F_n/\Gamma_3
  F_n$.  The map $\Gamma_2/\Gamma_3 f\colon \Gamma_2/\Gamma_3 F_n=\Lambda^2
  L\to  \Gamma_2/\Gamma_3 F_n=\Lambda^2 L$ is $\Lambda^2 \rho^{-1}$, so
  that we indeed get a diagram as claimed.

  The map $f$ induces a map $M\to  M$ which in turn gives us a map
  $\tau^* \IZ L^n\to  \IZ L^n$ by Lemma~\ref{lem:M_bijection}. The latter
  can be extended to a map of chain complexes $z\colon\tau^* P_*\to  P_*$, and
  we can find a map $y\colon P_*\to  P_{*+1}$ with $y_0=0$ and $dy=z^m-1$. We
  claim that $-\id_{\IZ}\otimes_{\IZ L} y_1$ is the map $L\to \Lambda^2 L$ of
  the statement; then we are done because of
$\hom_{\IZ}(\id_{\IZ}\otimes_{\IZ L} y_1,\IZ) = \hom_{\IZ L}(y_1,\id_{\IZ})=\alpha_1$.

  Recall from the proof of Lemma~\ref{lem:M_identification} that the kernel $U$
  of $\partial\colon P_1\to  P_0$ is the kernel of the map $a\colon M\to L$,
  and we have a map $p\colon \Gamma_2 F_n\to  U$. The map 
  $P_2= \IZ L \otimes \Lambda^2 L \xrightarrow{\epsilon\otimes\id} \Lambda^2 L$ 
  factors as
  $P_2\xrightarrow{\partial} U \xrightarrow{v} \Lambda^2 L$ for some map
  $v$. When we view $U$ as the cycles of the space $X$ (as in the proof of
  Lemma~\ref{lem:M_identification}), then $v$ maps every cycle to its
  ``area''. The map $v$ is $\IZ L$-linear when we equip $\Lambda^2 L$ with the
  trivial $\IZ L$-module structure. Now we claim that the composition $\Gamma_2
  F_n\xrightarrow{p} U\xrightarrow{v} \Lambda^2L=\Gamma_2/\Gamma_3 F_n$ is the
  projection map multiplied by $(-1)$. To see this, let us start with $\gamma u
  \gamma^{-1}$ with $\gamma\in F_n$ and $u\in\Gamma_2 F_n$. Then $p(\gamma u
  \gamma^{-1})=\gamma\diamond p(u) \diamond \gamma^{-1}$, and the latter is
  easily verified to be $p(u)\cdot (a(\gamma))\in P_1$; since $v$ is $\IZ
  L$-linear, $v(p(\gamma u \gamma^{-1})) = v(p(u))$ and $v(p([\gamma,u]))=0$. We
  have therefore shown that $vp$ maps $\Gamma_3F_n$ to $0$, and now it is enough
  to verify that $[x_i,x_j]$ maps to $-e_i\wedge e_j$. But $p([x_i,x_j]) = a_i
  \diamond a_j \diamond a_i^{-1} \diamond a_j^{-1} = -\partial [i,j]$, so we
  have proved the claim.

  Finally we show that $\id_{\IZ}\otimes_{\IZ L}y_1$ is the desired map. Start with a
  generator $[i]\in P_1$; in fact, $[i]\in M$ and $x_i$ maps to $[i]$ under the
  map $\pi\colon F_n\to  M$. By construction of $z$, the element $f^m(x_i)\in
  F_n$ maps to $z_1^m([i])$ under $\pi$, and $\pi(f^m(x_i)x_i^{-1}) =
  z_1^m([i])-[i]$. Furthermore, $f^m(x_i)x_i^{-1}\in\Gamma_2 F_n$ and
  $p(f^m(x_i)x_i^{-1}) = z_1^m([i])-[i] \in U$. Now the composite
  \[
  P_1\xrightarrow{y_1} P_2 = \IZ L \otimes_{\IZ} \Lambda^2 L  \xrightarrow{\epsilon\otimes \id} \IZ \otimes_{\IZ}\Lambda^2 L  = \Lambda^2 L
  \]  applied
  to $[i]$ is the same as
  \[ v \partial y_1([i]) = v( z_1^m[i]-[i] ) = vp(f^m(x_i)x_i^{-1}), \] and we
  are done.
\end{proof}

\begin{example}
  The lemma makes it even easier to compute $\alpha_1$ in
  Lemma~\ref{lem:main_example}. The map $\rho^{-1}$ is given by the matrix
  $\left(\begin{smallmatrix} 0 & -1 & 1 \\ 1 & 0 & 0 \\ 0 & 0 &
      1 \end{smallmatrix}\right)$, so we can lift this to the free group as
  \begin{align*}
    f\colon F_3&\to  F_3 \\
    x_1 &\mapsto x_2 \\
    x_2 &\mapsto x_1^{-1} \\
    x_3 &\mapsto x_1x_3
  \end{align*}
  Then $f^4$ maps $x_1$ to $x_1$, $x_2$ to $x_2$, and
  \[ x_3 \mapsto x_1x_3 \mapsto x_2x_1x_3 \mapsto x_1^{-1} x_2 x_1x_3 \mapsto
  x_2^{-1}x_1^{-1}x_2x_1x_3 = [x_2^{-1},x_1^{-1}] x_3 \] Therefore, the map
  $L\to  \Lambda^2L$ sends $e_1,e_2$ to $0$ and $e_3$ to $-e_1\wedge
  e_2$, which is indeed $-\alpha_1$ of what we already computed in
  Lemma~\ref{lem:main_example}.  
\end{example}

%%%%%%%%%%%%%%%%%%%%%%%%%%%%%%%%%%%%%%%%%%%%%%%%%%%%%%%%%%%%%%%%%%%%%%%%%
%%%%%%%%%%%%%%% Section 5:  Proof of Theorem~ref{the:positive} %%%%%%%%%%%%%%%%%%%%%%%%%
%%%%%%%%%%%%%%%%%%%%%%%%%%%%%%%%%%%%%%%%%%%%%%%%%%%%%%%%%%%%%%%%%%%%%%%%%

\typeout{--------  Section 5:  Proof of Theorem~ref{the:positive}--------------}

\section{Proof of Theorem~\ref{the:positive}}
\label{sec:Proof_of_Theorem_ref_the:positive}

\begin{proof}[Proof of Theorem~\ref{the:positive}]
Because of Lemma~\ref{lem:reduction_to_prime}
it suffices treat the special case, where $m = p^r$ 
for some prime number $p$ and natural number $r$ and $L = \IZ(\zeta)^k = \bigoplus_{i=1}^k \IZ(\zeta)$ 
for some natural number $k$. By Lemma~\ref{lem:Z[zeta]_has_compatible_action}
there exists a compatible group action.
Now we can apply~\cite[Theorem~2.3]{Adem-Pan(2006)}). 
\end{proof}

%%%%%%%%%%%%%%%%%%%%%%%%%%%%%%%%%%%%%%%%%%%%%%%%%%%%%%%%%%%%%%%%%%%%%%%%% 
%%%%%%%%%%%%%%%%%%%%%%%%%%%%%% Section 6:  A counterexample %%%%%%%%%%%%%%%%%%%%%
%%%%%%%%%%%%%%%%%%%%%%%%%%%%%%%%%%%%%%%%%%%%%%%%%%%%%%%%%%%%%%%%%%%%%%%%%

\typeout{--------  Section 6:  A counterexample--------------}

\section{A counterexample}
\label{sec:A_counterexample}

In this section we prove the existence of a counterexample, namely,
Theorem~\ref{the:negative}.  Some preparations are needed.

The next lemma shows that the maps $\alpha_s$ can be assumed to be of a special
form.
\begin{lemma}\label{lem:mult_extension}
  Let $z_0=\tau^{-1}$ and $y_0=0$. Suppose that $z_1\colon \tau^* P_1\to P_1$
  and $y_1\colon P_1\to P_2$ are given such that $\partial z_1=z_0\partial$
  and $\partial y_1=z_1^m-1$. Then one can extend $z$ and $y$ in such a way that
  $\partial y+y\partial = z^m-1$ and the map $\alpha_s$ is the $\IZ$-dual of the composition
  \[ 
   \Lambda^s L\to L\otimes \Lambda^{s-1} L \xrightarrow{\alpha_1\dual
    \otimes 1} \Lambda^2 L\otimes \Lambda^{s-1} L\xrightarrow{\mu} \Lambda^{s+1} L. 
  \]
  Here, the first and the last map are the comultiplication and the
  multiplication of $\Lambda^* L$, respectively (see, e.g., \cite[I.2]{Akin-Buchsbaum-Weyman(1982)}).
\end{lemma}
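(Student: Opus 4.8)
The plan is to exploit that the Koszul resolution $P_*$ is the exterior algebra $\Lambda^*_{\IZ L}(P_1)$ over $\IZ L$ on the free rank-$n$ module $P_1$, with differential $\partial$ the unique $\IZ L$-linear graded derivation determined by $\partial[i]=(0)-(e_i)$; thus $P_s=\Lambda^s_{\IZ L}(P_1)$, and $\IZ\otimes_{\IZ L}P_*=\Lambda^*L$ is the exterior Hopf algebra, with zero differential. I would accordingly make both extensions as explicit as possible: extend $z_1$ to $z$ multiplicatively, extend $y_1$ to $y$ by the universal homotopy--Leibniz rule for maps of DG-algebras, and then read off $\alpha_s$ simply by applying $\IZ\otimes_{\IZ L}-$ to the resulting formula for $y_s$.

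First I would set $z_s=\Lambda^s_{\IZ L}(z_1)$, using that $\tau^*P_s=\Lambda^s_{\IZ L}(\tau^*P_1)$ since restriction of scalars along the ring automorphism $\tau$ commutes with exterior powers. Its degree-$0$ part is $z_0=\tau^{-1}$, which preserves the augmentation, so $z$ lifts $\id_\IZ$; and since $\partial$ is a derivation and $z$ an algebra homomorphism, the two maps $\partial z$ and $z\partial$ obey the same $z$-twisted Leibniz rule and agree on the generators $P_1$, where $\partial z_1=z_0\partial$ by hypothesis, so $z$ is a chain map $\tau^*P_*\to P_*$. Because $\tau^m=\id$, the $m$-fold composite $z^m\colon P_*\to P_*$ is an $\IZ L$-linear DG-algebra endomorphism whose degree-$0$ part is $\tau^{-m}=\id$ and whose degree-$1$ part is the prescribed $z_1^m$. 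Next I would extend $y_1$ to $y\colon P_*\to P_{*+1}$ with $y_0=0$ by
\[ y_s(p_1\wedge\cdots\wedge p_s)=\sum_{j=1}^s(-1)^{j-1}\,z^m(p_1)\wedge\cdots\wedge z^m(p_{j-1})\wedge y_1(p_j)\wedge p_{j+1}\wedge\cdots\wedge p_s, \]
which is the unique $\IZ L$-linear degree-$(+1)$ map with $y_0=0$ satisfying the $(z^m,1)$-twisted Leibniz rule $y(ab)=y(a)b+(-1)^{|a|}z^m(a)y(b)$ (this is consistent with $\IZ L$-linearity precisely because $z^m$ restricts to the identity on $P_0$). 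Both $z^m-1$ and $\partial y+y\partial$ satisfy the degree-$0$ twisted Leibniz rule $\phi(ab)=\phi(a)b+z^m(a)\phi(b)$; they agree on $P_1$, where $\partial y+y\partial=\partial y_1=z_1^m-1$ since $y_0=0$, and on $P_0$, where both vanish since $z_0^m=\id$; hence $\partial y+y\partial=z^m-1$ on all of $P_*$.

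It then remains to identify $\alpha_s$. By construction $\alpha_s=\hom_{\IZ L}(y_s,\id_\IZ)$, which under the natural identification $\hom_{\IZ L}(P_j,\IZ)\cong\hom_{\IZ}(\Lambda^jL,\IZ)=(\Lambda^jL)\dual$ is exactly the $\IZ$-dual of $\overline{y}_s:=\IZ\otimes_{\IZ L}y_s\colon\Lambda^sL\to\Lambda^{s+1}L$. Applying $\IZ\otimes_{\IZ L}-$ to the relation $\partial y+y\partial=z^m-1$ and using that the Koszul differential becomes zero on $\Lambda^*L$ gives $\overline{z^m}=\id$ on $\Lambda^*L$, so all the factors $z^m(p_k)$ in the formula for $y_s$ become the identity after tensoring, leaving
\[ \overline{y}_s(e_{i_1}\wedge\cdots\wedge e_{i_s})=\sum_{j=1}^s(-1)^{j-1}\,e_{i_1}\wedge\cdots\wedge e_{i_{j-1}}\wedge\overline{y}_1(e_{i_j})\wedge e_{i_{j+1}}\wedge\cdots\wedge e_{i_s}. \]
Here $\overline{y}_1\colon L\to\Lambda^2L$ is, by the $s=1$ case of the definition, the $\IZ$-dual of $\alpha_1$, i.e. $\overline{y}_1=\alpha_1\dual$; and since moving the degree-$2$ element $\overline{y}_1(e_{i_j})$ to the front past $e_{i_1}\wedge\cdots\wedge e_{i_{j-1}}$ costs the sign $(-1)^{2(j-1)}=1$, the right-hand side equals $\bigl(\mu\circ(\alpha_1\dual\otimes 1)\circ\Delta\bigr)(e_{i_1}\wedge\cdots\wedge e_{i_s})$, where $\Delta\colon\Lambda^sL\to L\otimes\Lambda^{s-1}L$ is the comultiplication of $\Lambda^*L$ in this bidegree and $\mu\colon\Lambda^2L\otimes\Lambda^{s-1}L\to\Lambda^{s+1}L$ the multiplication. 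Dualizing yields the asserted formula for $\alpha_s$.

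I do not expect a conceptual obstacle; the one place that genuinely requires care is the sign bookkeeping. One must fix the graded-commutative DG-(co)algebra structure on the Koszul complex once and for all, verify that the multiplicative extension of $z_1$ is a chain map and that the homotopy--Leibniz formula for $y$ really solves $\partial y+y\partial=z^m-1$ (both are standard facts about free graded-commutative DG-algebras, but they depend on the chosen sign conventions), and then check that the Koszul signs appearing in $\overline{y}_s$ match the signs built into the comultiplication and multiplication of $\Lambda^*L$. No input beyond the earlier parts of the paper is needed.
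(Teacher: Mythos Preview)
Your overall strategy coincides with the paper's: extend $z_1$ multiplicatively, define $y$ by the explicit sum, and then read off $\bar y_s$ after tensoring with $\IZ$. The identification of $\bar y_s$ with $\mu\circ(\alpha_1\dual\otimes 1)\circ\Delta$ is carried out correctly.

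There is, however, a genuine gap in your verification of $\partial y+y\partial=z^m-1$. You assert that $y$ satisfies the twisted Leibniz rule $y(ab)=y(a)\,b+(-1)^{|a|}z^m(a)\,y(b)$ for \emph{all} $a,b\in P_*$, and that your formula is the unique $\IZ L$-linear map with this property. Neither claim is true. Already for $a,b\in P_1$, applying your formula to $a\wedge b$ and to $-b\wedge a$ and comparing gives the discrepancy
\[
y_1(a)\wedge\bigl(b-z_1^m(b)\bigr)+\bigl(a-z_1^m(a)\bigr)\wedge y_1(b)
= -\partial\bigl(y_1(a)\wedge y_1(b)\bigr)\in P_3,
\]
which has no reason to vanish (take e.g.\ $n\ge 4$ with $y_1([1])$ and $y_1([3])$ nonzero). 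So your formula, read as a multilinear map on $P_1^{\otimes s}$, does not descend to $\Lambda^s_{\IZ L}(P_1)$; equivalently, no $\IZ L$-linear $y$ satisfies the Leibniz rule globally. The paper avoids this by defining $y$ only on the ordered $\IZ L$-basis $[i_1,\dots,i_s]$ with $i_1<\cdots<i_s$ (where your formula is unambiguous), observing that the Leibniz identity holds for \emph{admissible} pairs $(\alpha,\beta)$ --- those expressible with all indices of $\alpha$ strictly below all indices of $\beta$ --- and then proving $\partial y+y\partial=z^m-1$ by induction on $s$, splitting each basis element as an admissible product. Your computation showing that $\partial y+y\partial$ inherits the degree-$0$ rule from the degree-$(+1)$ rule for $y$ remains valid for admissible pairs and is exactly what drives that induction; you just cannot invoke it for arbitrary products.
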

\begin{proof}
  This follows readily from~\cite[Theorem~6 on in~II.1 page~543]{Charlap-Vasquez(1969)} 
  and its  proof, but for convenience we give an adapted version of the proof here.
  Notice that $P_i=\IZ L \otimes_{\IZ} \Lambda^i L$, and we therefore get a
  multiplication $\bullet$ on $P_*$ by tensoring the algebras $\IZ L$ and
  $\Lambda^* L$. This turns $(P_*,\partial)$ and $(\tau^* P_*,\partial)$ into graded
  commutative differential graded algebras, and there is a unique way of extending $z_1$
  multiplicatively and $\IZ L$-linearly. Explicitly, it is given by the formula
\[ l\cdot [i_1,i_2,\dots,i_i] \mapsto \rho^{-1}(l)\cdot z_1([i_1])\bullet\dots\bullet z_1([i_i]). \]
One easily checks that $z$ is a chain map. Since this map is multiplicative, we get that $z^m$ is given by
\[
z^m \colon l\cdot [i_1,i_2,\dots,i_i] \mapsto l\cdot z_1^m([i_1])\bullet\dots\bullet z_1^m([i_i]). 
\]

Now we define $y$ to be the $\IZ L$-linear map given by
\begin{align}\label{ydefinition}
  y\colon [i_1,i_2,\dots,i_i] \mapsto \sum_{j=1}^i (-1)^{j-1}
  z_1^m([i_1])\bullet\dots\bullet z_1^m([i_{j-1}])\bullet y_1([i_j])\bullet
  [i_{j+1}]\bullet\dots\bullet [i_i].
\end{align}
Next we show that $\partial y+y\partial = z^m-1$ as maps $P_s\to P_s$ for $s = 0,1,2 \ldots$.
We proceed by induction over $s$. The induction beginning $s = 0,1$ follows from the definitions and assumptions,
the induction step from $s-1$ to $s\ge 2$ is done as follows. 
Consider $\alpha = [i_1,\ldots ,i_a]$  with $ a \ge 1$ and $i_1  < \cdots < i_a$ and $\beta = [j_1,\ldots,j_b]$ 
with $b \ge 1$ and $j_1 < \cdots<  j_b$ such that $a +b = s$ and $i_a < j_1$.
Then
\begin{align}\label{yrelation}
  y(\alpha\bullet \beta)=y(\alpha)\bullet \beta +(-1)^{|\alpha|} z^m(\alpha)\bullet y(\beta)
\end{align}
by definition. This observation generalizes as follows: let us call a pair $\alpha,\beta\in P_s$ admissible
if we can write $\alpha=\sum_r x_r[i_1^r,\dots,i_a^r]$ and $\beta=\sum_s y_s[j_1^s,\dots,j_b^s]$ with
$i_1^r<\dots<i_a^r<j_1^s<\dots<j_b^s$ and $x_r,y_s\in \IZ L$ for all $r,s$. Then \eqref{yrelation} holds, and every
element in $P_s$ is a linear combination of elements of the form $\alpha\bullet\beta$ with $(\alpha,\beta)$ 
admissible, so it is enough to prove $(\partial y+y\partial)(\alpha\bullet\beta)=(z^m-1)(\alpha\bullet\beta)$ in
that case. 

We directly deduce from \eqref{yrelation} that
\[ \partial y(\alpha\bullet \beta) = \partial y(\alpha) \bullet \beta - (-1)^{|\alpha|}
y(\alpha)\bullet \partial \beta + (-1)^{|\alpha|} \partial z^m(\alpha) \bullet y(\beta) +
z^m(\alpha)\bullet \partial y(\beta). \] 
On the other hand, 
$\partial(\alpha\bullet \beta) = \partial \alpha\bullet \beta + (-1)^{|\alpha|} \alpha\bullet \partial \beta,$ 
and since the pairs $(\partial\alpha,\beta)$ and $(\alpha,\partial\beta)$ are admissible as well,
 we can use~\eqref{yrelation} again and get
\[ 
y(\partial(\alpha\bullet \beta)) = y(\partial \alpha)\bullet \beta - (-1)^{|\alpha|} z^m(\partial
\alpha)\bullet y(\beta) + (-1)^{|\alpha|} y(\alpha)\bullet \partial \beta + z^m(\alpha)\bullet y(\partial \beta).
\] 
Adding these equations using the induction hypothesis $\partial y+y\partial (\alpha) =(z^m-1)(\alpha)$
and $\partial y+y\partial (\beta) =(z^m-1)(\beta)$, we get
\[ 
(\partial y+y\partial)(\alpha\bullet \beta) = (z^m-1)(\alpha)\bullet \beta + z^m(\alpha)\bullet (z^m-1)(\beta) = (z^m-1)(\alpha\bullet \beta). 
\] 
Having defined $y$ and $z$ and shown  $\partial y+y\partial = z^m-1$, we are now
ready to finish the proof of Lemma~\ref{lem:mult_extension}. Notice that there is a natural isomorphism 
$\hom_{\IZ  L}(X,\IZ)\cong \hom_{\IZ}(\IZ \otimes_{\IZ L} X,\IZ)$ for $\IZ L$-modules $X$,
so it remains to compute $\bar{y}=\id_{\IZ} \otimes_{\IZ L} y$. Using the fact that
$\id_{\IZ} \otimes_{\IZ L} z^m$ is the identity, we get from~\eqref{ydefinition} that
$\bar{y}_i\colon \Lambda^i L \to \Lambda^{i+1} L$ is given by
\[
  e_{i_1}\wedge\dots\wedge e_{i_i} \mapsto \sum_{j=1}^i (-1)^{j-1}
  e_{i_1}\wedge\dots\wedge \bar{y}_1(e_{i_j})\wedge\dots\wedge e_{i_i}.
\]
This agrees with the composition given in the statement of the lemma.
\end{proof}

\begin{theorem}\label{the:direct_sum_theorem}
  Suppose that $L=X\oplus \Lambda^2 X\dual$ for some $\IZ G$-module $X$ whose
  underlying $\IZ$-module is free of finite rank. If in the Lyndon-Hochschild-Serre spectral
  sequence the $d_2$-differential
  \[ H^*(G,H^3(L))\to H^*(G,H^2(L)) \] is zero, then the class 
  $[\alpha_1^X]\in \Ext^2_{\IZ G}(\Lambda^2 X\dual,X\dual)$ vanishes.
\end{theorem}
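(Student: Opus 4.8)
The plan is to produce a single class in $H^0(G,H^3(L))$ whose $d_2$-differential is, up to a split monomorphism, the class $[\alpha_1^X]$ itself, so that the hypothesis forces $[\alpha_1^X]=0$.

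\emph{Set-up.} Write $Y=\Lambda^2X\dual$, so that $L=X\oplus Y$ and $L\dual=X\dual\oplus Y\dual$ as $\IZ G$-modules; since $X$ is $\IZ$-free of finite rank there are natural $\IZ G$-isomorphisms $Y\dual\cong\Lambda^2X$ and $\hom_{\IZ}(\Lambda^2X\dual,X\dual)\cong Y\dual\ox_{\IZ}X\dual$. Using $H^j(L)\cong\Lambda^jL\dual$ and the splitting $\Lambda^2(X\dual\oplus Y\dual)=\Lambda^2X\dual\oplus(X\dual\wedge Y\dual)\oplus\Lambda^2Y\dual$ (where $X\dual\wedge Y\dual$ denotes the image of $X\dual\ox Y\dual$), we regard $\hom_{\IZ}(\Lambda^2X\dual,X\dual)$ as a $\IZ G$-direct summand of $H^2(L)$ through a split monomorphism $\iota\colon X\dual\wedge Y\dual\hookrightarrow\Lambda^2L\dual$, and correspondingly $[\alpha_1^X]\in\Ext^2_{\IZ G}(\Lambda^2X\dual,X\dual)=H^2(G,X\dual\wedge Y\dual)$.

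\emph{Compatible choices.} The Koszul resolution of $L=X\oplus Y$ is the tensor product $P_*^X\ox_{\IZ}P_*^Y$ of the Koszul resolutions of $X$ and $Y$, and $\IZ L=\IZ X\ox_{\IZ}\IZ Y$. Starting from $z^X,y^X$ and $z^Y,y^Y$ chosen as in Lemma~\ref{lem:mult_extension}, I would set $z^L=z^X\ox z^Y$ and $y^L=y^X\ox(z^Y)^m+(\text{sign})\ox y^Y$; one checks that $z_1^L,y_1^L$ then satisfy the hypotheses of Lemma~\ref{lem:mult_extension} (here one uses $y_0^X=y_0^Y=0$ and $(z_0)^m=\tau^{-m}=1$), so we may in addition assume that $\alpha_2^L$ is the $\IZ$-dual of $\Lambda^2L\to L\ox L\xrightarrow{(\alpha_1^L)\dual\ox 1}\Lambda^2L\ox L\xrightarrow{\mu}\Lambda^3L$, equivalently $\alpha_2^L(u\wedge v\wedge w)=\alpha_1^L(u\wedge v)\wedge w-\alpha_1^L(u\wedge w)\wedge v+\alpha_1^L(v\wedge w)\wedge u$. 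The block form of $y_1^L$ on $P_1^L=(P_1^X\ox P_0^Y)\oplus(P_0^X\ox P_1^Y)$ shows that $\bar y_1^L=\id_{\IZ}\ox_{\IZ L}y_1^L\colon X\oplus Y\to\Lambda^2X\oplus(X\ox Y)\oplus\Lambda^2Y$ is block diagonal, with $X$-block $\bar y_1^X$, with $Y$-block a multiple of $\bar y_1^Y$, and with \emph{no} component into $X\ox Y$; dualising, $\alpha_1^L$ restricts to $\alpha_1^X$ on the summand $\Lambda^2X\dual$ of $\Lambda^2L\dual$ and \emph{vanishes} on the summand $X\dual\wedge Y\dual$.

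\emph{The test class and the conclusion.} Let $c=\id_{\Lambda^2X}\in\hom_{\IZ}(\Lambda^2X,\Lambda^2X)=(\Lambda^2X)\dual\ox\Lambda^2X=\Lambda^2X\dual\ox Y\dual\subseteq\Lambda^3L\dual=H^3(L)$; this is $G$-invariant, so defines $c\in H^0(G,H^3(L))$, and $c(1)=\sum_{i<j}e_i\dual\wedge e_j\dual\wedge(e_i\wedge e_j)$ in $\Lambda^3L\dual$ with $e_i\dual\in X\dual$ and $e_i\wedge e_j\in Y\dual$. Applying the formula for $\alpha_2^L$ and the two facts $\alpha_1^L|_{\Lambda^2X\dual}=\alpha_1^X$, $\alpha_1^L|_{X\dual\wedge Y\dual}=0$, all the ``mixed'' terms drop out and one gets $\alpha_2^L(c(1))=\sum_{i<j}\alpha_1^X(e_i\dual\wedge e_j\dual)\wedge(e_i\wedge e_j)\in X\dual\wedge Y\dual$, which under the natural isomorphisms $X\dual\wedge Y\dual\cong X\dual\ox Y\dual\cong Y\dual\ox X\dual=\hom_{\IZ}(\Lambda^2X\dual,X\dual)$ is $\pm\alpha_1^X$. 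By Lemma~\ref{lem:differential_is_product_with_alpha}, $d_2(c)=c\cdot[\alpha_2^L]=(\alpha_2^L\circ c)\cdot\theta$, where $\theta$ is the chosen generator of $\Ext^2_{\IZ G}(\IZ,\IZ)$ and $\alpha_2^L\circ c$ is viewed in $\hom_{\IZ G}(\IZ,H^2(L))=H^2(L)^G$; since $\alpha_2^L\circ c$ equals $\iota$ composed with $\pm\alpha_1^X$, naturality of $(-)\cdot\theta$ gives $d_2(c)=\pm\,\iota_*\bigl([\alpha_1^X]\bigr)$. The hypothesis gives $d_2(c)=0$, and because $\iota$ is a split monomorphism of $\IZ G$-modules the map $\iota_*\colon H^2(G,X\dual\wedge Y\dual)\to H^2(G,H^2(L))$ is injective, whence $[\alpha_1^X]=0$.

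\emph{Main obstacle.} The genuinely delicate point is the \emph{compatible choices} step: one must carry through the tensor-product bookkeeping for the Koszul complexes and homotopies carefully enough to obtain \emph{simultaneously} the multiplicative description of $\alpha_2^L$ supplied by Lemma~\ref{lem:mult_extension} and the vanishing of $\alpha_1^L$ on the summand $X\dual\wedge Y\dual$; once this is in place the rest is formal, and the various signs (in the comultiplication of $\Lambda^*L\dual$, in $d_2$, and in the duality isomorphisms) are immaterial for the final statement.
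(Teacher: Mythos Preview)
Your proposal is correct and follows essentially the same route as the paper: both arguments choose $z_1^L,y_1^L$ block-diagonally from $z_1^X,y_1^X$ and $z_1^Y,y_1^Y$, invoke Lemma~\ref{lem:mult_extension} to express $\alpha_2^L$ as a Leibniz-type formula in $\alpha_1^L$, and then feed in the element corresponding to $\id_{\Lambda^2 X\dual}$ via the identification $Y=\Lambda^2X\dual$ to recover $\alpha_1^X$ inside the $X\dual\wedge Y\dual$ summand. The only cosmetic difference is that the paper packages this as a commutative square relating $\alpha_2^L$ to $\alpha_1^X\otimes\id$ and then applies $\Ext^2_{\IZ G}(\IZ,-)$ (landing in $H^2(G,H^3L)\to H^4(G,H^2L)$), whereas you write down the explicit invariant $c\in H^0(G,H^3L)$ and compute $d_2(c)\in H^2(G,H^2L)$ directly; both are covered by the hypothesis on $H^*(G,H^3L)$.
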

\begin{proof}
  Let $L=X\oplus Y$ for $\IZ G$-modules $X$ and $Y$. We know that $X\otimes Y$
  is a direct summand of $\Lambda^2 L$ as $\IZ G$-module from the exponential law~\eqref{exponential_law}, and similarly
  $\Lambda^2 X\otimes Y$ is a direct summand of $\Lambda^3 L$. In the the sequel we denote by $\iota$ 
   the inclusions of and by $\pi$ obvious projections onto direct summands. 

  Next we prove that  the  diagram
  \begin{align}\label{alphadiagram}
    \xymatrix@!C=7em{\Lambda^2 L \ar[r]^{\alpha_2\dual} & \Lambda^3 L \ar[d]^{\pi} \\
      X\otimes Y \ar[u]^{\iota} \ar[r]_{(\alpha_1^X)\dual\otimes \id} & \Lambda^2
      X \otimes Y }
  \end{align}
  commutes if we choose the maps $z$, $y$ carefully. To do so, let $P^X_*$,
  $P^Y_*$ be the Koszul complexes associated with $X$ and $Y$, respectively, and
  choose maps 
  $z_1^X \colon \IZ X \otimes_{\IZ} X \to \IZ X \otimes_{\IZ} X$, 
  $y_1^X \colon \IZ X \otimes_{\IZ} X \to \IZ X \otimes_{\IZ} \Lambda^2 X$, 
  $z_1^Y \colon \IZ Y \otimes_{\IZ} Y \to \IZ Y \otimes_{\IZ} Y$, 
  and $y_1^Y \colon \IZ Y \otimes_{\IZ} Y \to \IZ Y \otimes_{\IZ} \Lambda^2 Y$.
  Define
  \begin{align*}
    z_1^L &= \bigl(\id_{\IZ L} \otimes_{\IZ X} z_1^X \bigr) \oplus \bigl( \id_{\IZ L} \otimes_{\IZ Y} z_1^Y \bigr) \colon  \tau^*(\IZ L\otimes L)
   \to \IZ L\otimes_{\IZ} L; 
    \\
    y_1^L &
    = \bigl(\id_{\IZ L} \otimes_{\IZ} \iota \bigr)\circ \bigl(\bigl(\id_{\IZ L} \otimes_{\IZ X} y_1^X \bigr) \oplus \bigl(\id_{\IZ L} \otimes_{\IZ Y} y_1^Y\bigr)\bigr) 
   \colon \IZ L\otimes_{\IZ} L \to \IZ L \otimes_{\IZ} \Lambda^2 L.
  \end{align*}
   By definition, the diagram
  \[ 
   \xymatrix@!C=5em{ L \ar[r]^{\id_{\IZ}\otimes_{\IZ L} y_1^L} & \Lambda^2 L \\ X
    \ar[u]^{\iota} \ar[r]_{\id_{\IZ} \otimes_{\IZ X} y_1^X} & \Lambda^2 X
    \ar[u]^{\iota} } 
   \] 
  commutes (and similarly for $Y$). Now we use 
  Lemma~\ref{lem:mult_extension} to get maps $z^L$ and $y^L$,.

 It remains to  show that $X\otimes Y\xrightarrow{\bar{y}^X\otimes \id} \Lambda^2 X \otimes Y$
  equals the composition
  \[ X\otimes Y \xrightarrow{\iota} \Lambda^2 L \xrightarrow{\nabla} L\otimes L
  \xrightarrow{\bar{y}^L\otimes \id} \Lambda^2 L\otimes L \xrightarrow{\mu}
  \Lambda^3 L \xrightarrow{\pi} \Lambda^2 X\otimes Y 
  \] 
  where $\bar{y}^L=\id_{\IZ} \otimes_{\IZ L} y$, $\nabla$ 
  is the comultiplication and $\mu$
  the multiplication of $\Lambda^* L$. So let us start with $a\otimes b\in X\otimes Y$; then
  $\bar{y}^L\otimes 1 \circ \nabla$ maps it to $\bar{y}(a)\otimes
  b-\bar{y}(b)\otimes a\in \Lambda^2 X\otimes Y\oplus \Lambda^2 Y\otimes
  X\subset \Lambda^2 L\otimes L$. But $\pi\mu$ is the identity on the first
  summand and zero on the second one. We have therefore shown 
  that~\eqref{alphadiagram} commutes. 

  Dualizing the diagram~\eqref{alphadiagram}  yields
  \[ \xymatrix@!C=10em{
    \Lambda^2 L\dual \ar[d] 
    & 
   \Lambda^3 L\dual \ar[l]_{\alpha_2^L} 
   \\
    X\dual \otimes Y\dual \ar[d]_-{\cong}
    & 
   \Lambda^2 X\dual \otimes Y\dual \ar[u] \ar[l]^{\alpha_1^X \otimes \id}    
    \\
    \hom_{\IZ} (Y,X\dual)  
    & 
    \hom_{\IZ} (Y,\Lambda^2 X\dual) \ar[u]_-{\cong}      \ar[l]^{\hom_{\IZ}(\id_Y,\alpha_1^X)} } 
   \] 
  Now put  $Y=\Lambda^2 X\dual$. Then the bottom row maps $\id_{\Lambda^2 X\dual}$ to
  $\alpha_1^X\in\hom_{\IZ}(\Lambda^2 X\dual,X\dual)$. This implies that the map
  \begin{multline*}
   \Ext^2_{\IZ G} \bigl(\id_{\IZ}, \hom_{\IZ}(\id_{\Lambda^2 X\dual},\alpha_1^X)\bigr) \colon 
  \Ext^2_{\IZ G} \bigl(\IZ, \hom_{\IZ}(\Lambda^2 X\dual,\Lambda^2 X\dual)\bigr) 
   \\
  \to 
  \Ext^2_{\IZ G}  \bigl(\IZ, \hom_{\IZ}(\Lambda^2 X\dual,X\dual)\bigr) \cong \Ext^2_{\IZ G}  \bigl(\Lambda^2 X\dual,X\dual)\bigr)
\end{multline*}
contains the class $[\alpha_1^X]$ in its image. 
The second differential $d_2$ is by assumption zero and agrees by Lemma~\ref{lem:differential_is_product_with_alpha} with
the composite
\[
\Ext^2_{\IZ G} (\id_{\IZ}, \alpha_2^L) \colon \Ext^2_{\IZ G}  (\IZ, \Lambda^3L\dual) \to \Ext^2_{\IZ G}  (\IZ, \Lambda^2L\dual) 
\xrightarrow{\cong} \Ext^4_{\IZ G}  (\IZ, \Lambda^2L\dual), 
\]
where the last map is the periodicity isomorphism. We conclude from the diagram above  that  the map
$ \Ext^2_{\IZ G} \bigl(\id_{\IZ}, \hom_{\IZ}(\id_{\Lambda^2 X\dual},\alpha_1^X)\bigr) $ is trivial and hence $[\alpha_1^X]$ vanishes.
\end{proof}

\begin{proof}[Proof of Theorem~\ref{the:negative}] 
This follows directly from 
Theorem~\ref{the:direct_sum_theorem} and Lemma~\ref{lem:main_example}.
\end{proof}
  
In order  prove Corollary~\ref{cor:m_divisible_by_four}, we need:

\begin{lemma}\label{lem:general_cyclic}
       Let $G'\to G$ be a surjection of finite cyclic groups, and let us regard any $\IZ G$-module as $\IZ G'$-module via this
       map.
       \begin{enumerate}
         \item \label{lem:general_cyclic:injective} 
       For every $\IZ G$-module $X$ whose underlying $\IZ$-module is free, the induced map 
       $H^2(G,X)\to H^2(G',X)$ is injective;
         \item \label{lem:general_cyclic:class} 
       Let $L$ be a $\IZ G$-module as above; then the class $[\alpha_1^G]$ maps to the class $[\alpha_1^{G'}]$ 
       under the map $H^2(G,\hom_{\IZ}(\Lambda^2 L,L))\to H^2(G',\hom_{\IZ}(\Lambda^2 L,L))$.
     \end{enumerate}
   \end{lemma}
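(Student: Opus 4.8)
The plan is as follows. Write $G=\langle t\rangle\cong\IZ/m$ and $G'=\langle s\rangle\cong\IZ/m'$, and choose the generators so that the given surjection $\pi\colon G'\to G$ sends $s$ to $t$; put $C=\ker\pi$ and $d=|C|=m'/m$. Every $\IZ G$-module, regarded as a $\IZ G'$-module, is acted on trivially by $C$, and the maps in the statement are the inflation maps along $\pi$. For part (i) the idea is to feed the extension $1\to C\to G'\xrightarrow{\pi}G\to1$ and the module $X$ into the five-term exact sequence of low-degree terms of the associated Lyndon-Hochschild-Serre spectral sequence; since $X^C=X$ it reads
\[
0\to H^1(G;X)\to H^1(G';X)\to H^1(C;X)^G\to H^2(G;X)\xrightarrow{\operatorname{infl}}H^2(G';X).
\]
Now $H^1(C;X)=\hom_\IZ(C,X)$ is the group of $d$-torsion elements of $X$, which vanishes because $X$ is a finitely generated free abelian group; hence the term $H^1(C;X)^G$ vanishes and the final inflation map is injective. (One can also make this map explicit: via the standard $2$-periodic free resolutions it is $[x]\mapsto[dx]\colon X^G/N_GX\to X^G/N_{G'}X$, where $N_G=\sum_{g\in G}g$ and $N_{G'}$ acts on $X$ as $d\cdot N_G$; injectivity is again immediate from torsion-freeness of $X$, and for $X=\IZ$ one reads off that inflation sends the generator of $\Ext^2_{\IZ G}(\IZ,\IZ)$ given by the extension $\IZ\to\IZ G\xrightarrow{1-t}\IZ G\to\IZ$ to $d$ times the analogous generator $e_{G'}$ of $\Ext^2_{\IZ G'}(\IZ,\IZ)$.)

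For part (ii) I would start from the description in Section~\ref{sec:The_cohomology_classes_[alpha_s]}: with the pairing of Subsection~\ref{subsec:The_definition_of_the_classes_[alpha_s]} one has $[\alpha_1^G]=\alpha_1^G\cdot e_G$, where $\alpha_1^G\in\hom_{\IZ G}(H^2(L),H^1(L))=\Ext^0_{\IZ G}(H^2(L),H^1(L))$ and $e_G\in\Ext^2_{\IZ G}(\IZ,\IZ)$ is the generator just named, and likewise for $G'$, so that $[\alpha_1^G]$ and $[\alpha_1^{G'}]$ live in $\Ext^2_{\IZ G}(H^2(L),H^1(L))=\Ext^2_{\IZ G'}(H^2(L),H^1(L))$. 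Inflation is induced by the exact functor ``restriction of scalars along $\IZ\pi$'', hence is multiplicative for this pairing; moreover a $\IZ$-linear map $H^2(L)\to H^1(L)$ is $G$-linear if and only if it is $G'$-linear, so on $\Ext^0$ inflation is the identity and $\alpha_1^G,\alpha_1^{G'}$ lie in one and the same group. Using the computation at the end of part (i), namely $\operatorname{infl}(e_G)=d\cdot e_{G'}$, we get
\[
\operatorname{infl}\bigl([\alpha_1^G]\bigr)=\operatorname{infl}(\alpha_1^G)\cdot\operatorname{infl}(e_G)=\alpha_1^G\cdot(d\,e_{G'})=(d\,\alpha_1^G)\cdot e_{G'}.
\]
So it remains only to prove that $\alpha_1^{G'}=d\cdot\alpha_1^G$; then $(d\,\alpha_1^G)\cdot e_{G'}=\alpha_1^{G'}\cdot e_{G'}=[\alpha_1^{G'}]$, where we use that $[\alpha_1^{G'}]$ is independent of the choices entering its construction.

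To prove $\alpha_1^{G'}=d\cdot\alpha_1^G$, fix a special $\IZ L$-resolution $P_*$ of $\IZ$ together with a chain map $z\colon\tau^*P_*\to P_*$ lifting $\id_\IZ$; since $\tau=\IZ\rho$ with $\rho$ multiplication by $t=\pi(s)$, one and the same $P_*$ and $z$ serve for $G$ and for $G'$. If $y\colon P_*\to P_*[1]$ is chosen with $\partial y+y\partial=z^m-1$ (and is used to build $\alpha_1^G$), set
\[
y':=y\circ\Bigl(\sum_{k=0}^{d-1}z^{mk}\Bigr).
\]
Since each $z^{mk}=(z^m)^k$ is a chain map, a telescoping computation gives $\partial y'+y'\partial=(z^m-1)\sum_{k=0}^{d-1}z^{mk}=z^{md}-1=z^{m'}-1$, so $y'$ may be used to build $\alpha_1^{G'}$, and therefore $\alpha_1^{G'}=\hom_{\IZ L}(y'_1,\id_\IZ)=\hom_{\IZ L}\bigl(\sum_{k=0}^{d-1}((z^m)_1)^k,\id_\IZ\bigr)\circ\alpha_1^G$. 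Finally, because $P_*$ is special the complex $\hom_{\IZ L}(P_*,\IZ)$ has vanishing differentials, so applying $\hom_{\IZ L}(-,\id_\IZ)$ to $z^m-1=\partial y+y\partial$ shows that $\hom_{\IZ L}((z^m)_1,\id_\IZ)$ is the identity of $\hom_{\IZ L}(P_1,\IZ)$; hence $\hom_{\IZ L}\bigl(\sum_{k=0}^{d-1}((z^m)_1)^k,\id_\IZ\bigr)=d\cdot\id$ and $\alpha_1^{G'}=d\cdot\alpha_1^G$, as required.

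I expect the main obstacle to lie in this last paragraph: one must verify that the homotopy for $G'$ may be taken to be $y\circ\sum_k z^{mk}$ and --- crucially --- that specialness of $P_*$ forces $\hom_{\IZ L}((z^m)_1,\id_\IZ)$ to be the identity \emph{on the nose} rather than merely up to homotopy, since it is exactly this that turns the naive telescoping relation into the identity $\alpha_1^{G'}=d\cdot\alpha_1^G$. The remaining bookkeeping in part (ii) --- keeping track of what ``inflation'' means on $\Ext^0$, on $\Ext^2_{\IZ G}(\IZ,\IZ)$, and on the mixed $\Ext^2$-group --- is routine once multiplicativity of inflation over this exterior product is granted.
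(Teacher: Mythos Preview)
Your argument is correct. Part~(i) is exactly the paper's proof: both feed the extension $1\to C\to G'\to G\to 1$ into the Lyndon--Hochschild--Serre spectral sequence (equivalently, the five-term sequence) and kill the obstruction term via $H^1(C;X)=0$ for $\IZ$-free $X$ with trivial $C$-action.

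For part~(ii) the paper takes a different route: it simply invokes \cite[Theorem~3 in~I.2 on page~538]{Charlap-Vasquez(1969)}, which asserts the naturality of the characteristic classes $v^n$ under change of group, and then appeals to the identification of $[\alpha_1]$ with $v^2$ established earlier (Lemma~\ref{lem:alpha_and_v_agree}). Your approach is instead fully self-contained within the paper's own framework: you exhibit an explicit homotopy $y'=y\circ\sum_{k=0}^{d-1}z^{mk}$ witnessing $z^{m'}-1$, use specialness of $P_*$ to deduce that $\hom_{\IZ L}((z^m)_1,\id_\IZ)=\id$ on the nose (not merely up to homotopy), and hence obtain $\alpha_1^{G'}=d\,\alpha_1^G$; combined with the explicit computation $\operatorname{infl}(e_G)=d\,e_{G'}$ and multiplicativity of inflation, this yields $\operatorname{infl}([\alpha_1^G])=[\alpha_1^{G'}]$. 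The step you flagged as the potential obstacle---that specialness forces $\hom_{\IZ L}(z^m,\id_\IZ)=\id$ exactly---is indeed valid, since applying $\hom_{\IZ L}(-,\IZ)$ to $z^m-1=\partial y+y\partial$ annihilates both terms on the right. Your argument buys independence from the Charlap--Vasquez machinery and makes transparent the interplay between the factor $d$ appearing in $\operatorname{infl}(e_G)$ and the factor $d$ appearing in $\alpha_1^{G'}$; the paper's citation buys brevity.
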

    \begin{proof}~\ref{lem:general_cyclic:injective} 
    The spectral sequence for the extension $\IZ / d\IZ \to \IZ / dm\IZ \to \IZ / m\IZ$ yields an exact sequence
        \[ 
        \cdots \to H^0(\IZ/m,H^1(\IZ / d\IZ, X)) \to H^2(\IZ / m\IZ, X) \to H^2(\IZ / dm\IZ, X) \to \cdots.
        \]
        Since $X$ is torsion-free and $\IZ/d$ acts trivially on it, $H^1(\IZ / d\IZ, X)$ and hence also the first group are trivial, and therefore the second map is injective.
        \\[1mm]~\ref{lem:general_cyclic:class} This follows from~\cite[I.2, Theorem~3 in I.2 on page~538]{Charlap-Vasquez(1969)}.
      \end{proof}
 \smallskip
\begin{proof}[Proof of Corollary~\ref{cor:m_divisible_by_four}]~\ref{cor:m_divisible_by_four:non-trivial}
This follows from Theorem~\ref{the:negative}  and Lemma~\ref{lem:general_cyclic}.
\\[1mm]~\ref{cor:m_divisible_by_four:trivial}
This follows from~\cite[Corollary in~II.1 on page~543]{Charlap-Vasquez(1969)}, Lemma~\ref{lem:differential_is_product_with_alpha}
and Lemma~\ref{lem:alpha_and_v_agree}.
\end{proof}

%%%%%%%%%%%%%%%%%%%%%%%%%%%%%%%%%%%%%%%%%%%%%%%%%%%%%%%%%%%%%%%%%%%%%%%%%
%%%%%%%%%%%%%%% Section 7:  Group cohomology and the equivariant Eulercharacteristic %%%%%%%%%%%
%%%%%%%%%%%%%%%%%%%%%%%%%%%%%%%%%%%%%%%%%%%%%%%%%%%%%%%%%%%%%%%%%%%%%%%%%

\typeout{--------  Section 7:  Group cohomology and the equivariant Euler characteristic--------------}

\section{Group cohomology and the equivariant Euler characteristic}
\label{sec:Group_cohomology_and_the_equivariant_Eulercharacteristic}

In this section we relate the cohomology of $\Gamma$ to the equivariant Euler characteristic of 
the finite $G$-$CW$-complex $L\backslash \underline{E}\Gamma$.

Let $\Sw(G)$ be \emph{Swan's group}, i.e., generators
are isomorphism classes $[M]$ of $\IZ G$-modules $M$ which are finitely
generated as abelian groups, and every short exact sequence 
$0 \to M_0 \to M_1 \to M_2 \to 0$ of such modules yields the relation 
$[M_0]- [M_1] + [M_2] = 0$. Next we define a homomorphism
\begin{eqnarray}
\widehat{h} \colon \Sw(G) \to \IQ^{>0}
\label{h:Sw(G)_to_Q_ge}
\end{eqnarray}
to the multiplicative group of positive rational numbers.  It sends the class of
a $\IZ G$-module $M$ which is finitely generated as abelian group to
$\frac{|\widehat{H}^0(G;M)|}{|\widehat{H}^1(G;M)|}$. Notice that
$\widehat{H}^i(G;M)$ is a finite group for such $M$.  In order to show that this
is well-defined, we have to check for an exact sequence 
$0 \to M_0 \to M_1 \to M_2 \to 0$ of $\IZ G$-modules which are finitely generated as abelian groups
\[\frac{|\widehat{H}^0(G;M_1)|}{|\widehat{H}^1(G;M_1)|} 
= 
\frac{|\widehat{H}^0(G;M_0)|}{|\widehat{H}^1(G;M_0)|} \cdot \frac{|\widehat{H}^0(G;M_2)|}{|\widehat{H}^1(G;M_2)|}.\]
This follows from the induced long exact sequence (see~\cite[(5.1) in~VI.5 on page~136]{Brown(1982)})
\begin{multline*}
\cdots \to \widehat{H}^i(G;M_0)  \to \widehat{H}^i(G;M_1)  \to \widehat{H}^i(G;M_2)
 \\ 
\to \widehat{H}^{i+1}(G;M_0)  
\to \widehat{H}^{i+1}(G;M_1)  \to \widehat{H}^{i+1}(G;M_2) \to \cdots
\end{multline*}
which is compatible with the cup-product 
(see~\cite[(5.6) in~VI.5 on page~136]{Brown(1982)}), and from the $2$-periodicity of the Tate cohomology
$\widehat{H}^i(G;M) \xrightarrow{\cong} \widehat{H}^{i+2}(G;M)$ coming from the
cup-product with a generator of $\widehat{H}^0(G;\IZ) \cong \IZ/m$
(see~\cite[Theorem~9.1 in~VI.9 on page~154]{Brown(1982)}).

Given a $\IZ G$-module $M$ which is finitely generated as abelian group, 
define \emph{its homological Euler characteristic} by
\begin{eqnarray}
\chi^G_h(M) := \sum_{i \ge 0} (-1)^i \cdot [H^i(M)] \in \Sw(G).
\label{homological-Euler_characteristic}
\end{eqnarray}

\begin{lemma} \label{lem:widehat_and_homological_Euler_characteristic}
 We get for all integers $k$ with $2k > n$
\[
\widehat{h}\bigl(\chi^G_h(L)\bigr) = \frac{|H^{2k}(\Gamma)|}{|H^{2k+1}(\Gamma)|}.
\]
\end{lemma}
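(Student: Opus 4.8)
The plan is to derive the identity from the collapse of the Lyndon-Hochschild-Serre spectral sequence, together with the $2$-periodicity of the Tate cohomology of the finite cyclic group $G$. First I would note that $H^j(L)\cong \Lambda^j(L\dual)$ is a finitely generated free abelian group which vanishes for $j>n$; hence every $\widehat{H}^i(G;H^j(L))$ is finite, so all the quantities in the statement make sense. Since $G$ acts freely on $L$ outside the origin, Theorem~\ref{the:positive} applies and the spectral sequence of $1\to L\to \Gamma\to G\to 1$ collapses at $E_2$. In particular, for every $\ell\ge 0$ the group $H^\ell(\Gamma)$ carries a finite filtration whose associated graded is $\bigoplus_{i+j=\ell} H^i(G;H^j(L))$, so that
\[
|H^\ell(\Gamma)| = \prod_{i+j=\ell} |H^i(G;H^j(L))|
\]
as soon as the right-hand side is finite; note that orders multiply in short exact sequences, so the (also guaranteed) triviality of the extension problems is not even needed here.

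Next I would specialize to $\ell=2k$ and $\ell=2k+1$, using the hypothesis $2k>n$ (which also gives $2k+1>n$). In each bidegree that occurs one has $j\le n$, hence $i=\ell-j\ge \ell-n\ge 1$, so $H^i(G;H^j(L)) = \widehat{H}^i(G;H^j(L))$. By the periodicity isomorphism $\widehat{H}^i(G;M)\cong \widehat{H}^{i+2}(G;M)$ for the finite cyclic group $G$, such a term is isomorphic to $\widehat{H}^0(G;H^j(L))$ when $i$ is even and to $\widehat{H}^1(G;H^j(L))$ when $i$ is odd; and since $i+j=\ell$ is fixed, the parity of $i$ is governed by the parity of $j$. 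This gives
\[
|H^{2k}(\Gamma)| = \prod_{j\ \text{even}} |\widehat{H}^0(G;H^j(L))|\cdot \prod_{j\ \text{odd}} |\widehat{H}^1(G;H^j(L))|
\]
and the same formula for $|H^{2k+1}(\Gamma)|$ with $\widehat{H}^0$ and $\widehat{H}^1$ interchanged, because passing from $\ell=2k$ to $\ell=2k+1$ flips the parity of $i=\ell-j$ for every $j$.

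Dividing these two expressions, the even-$j$ and odd-$j$ factors recombine and I would obtain
\[
\frac{|H^{2k}(\Gamma)|}{|H^{2k+1}(\Gamma)|}
= \prod_{j=0}^{n}\left(\frac{|\widehat{H}^0(G;H^j(L))|}{|\widehat{H}^1(G;H^j(L))|}\right)^{(-1)^j}
= \prod_{j\ge 0}\widehat{h}\bigl([H^j(L)]\bigr)^{(-1)^j}.
\]
By the definition of $\widehat{h}$ in~\eqref{h:Sw(G)_to_Q_ge}, the fact that $\widehat{h}\colon \Sw(G)\to \IQ^{>0}$ is a group homomorphism, and the definition of $\chi^G_h$ in~\eqref{homological-Euler_characteristic}, the right-hand side equals $\widehat{h}\bigl(\sum_{j\ge 0}(-1)^j[H^j(L)]\bigr)=\widehat{h}\bigl(\chi^G_h(L)\bigr)$, which is the claim.

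I do not expect a serious obstacle here: the argument is essentially bookkeeping once Theorem~\ref{the:positive} is invoked. The only point requiring genuine care is the indexing, namely checking that the hypothesis $2k>n$ really forces $i\ge 1$ in every relevant bidegree --- so that ordinary cohomology may be replaced by Tate cohomology and periodicity applies --- and keeping track of how the choice between $\widehat{H}^0$ and $\widehat{H}^1$ flips between the numerator and the denominator.
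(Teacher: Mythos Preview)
Your argument is correct under the standing assumption that $G$ acts freely on $L$ outside the origin, and it is pleasantly short. However, it is genuinely different from the paper's proof, and the difference matters.

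The paper proves the lemma \emph{without} invoking the collapse of the Lyndon--Hochschild--Serre spectral sequence. Instead it shows directly that for each $r\ge 2$ the alternating product
\[
\prod_{i+j\in\{2k,2k+1\}} |E_r^{i,j}|^{(-1)^{i+j}}
\]
is unchanged when passing from $E_r$ to $E_{r+1}$, using only that (i) along each $d_r$-complex the alternating product of orders equals the alternating product of the orders of its homology groups, and (ii) cup product with a generator of $E_2^{2,0}=H^2(G)$ gives isomorphisms $E_r^{i,j}\cong E_r^{i+2,j}$ once $i+j>n$. Thus the $E_2$ and $E_\infty$ products agree, and these are identified with $\widehat{h}(\chi^G_h(L))$ and $|H^{2k}(\Gamma)|/|H^{2k+1}(\Gamma)|$ respectively.

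What this buys: the paper's proof does not use Theorem~\ref{the:positive}, hence does not need the free-action hypothesis. This is not academic: the lemma feeds into Theorem~\ref{the:Group_cohomology_and_equivariant_Eulercharacteristic}, which is then applied in Example~\ref{exa:G-action_has_non-trivial_fixed_point} under the explicit assumption $L^G\neq 0$, where the action is \emph{not} free outside the origin and the spectral sequence need not collapse. Your proof would not cover that case. Conversely, when the free-action hypothesis is in force your route is cleaner, since it replaces the $E_r\to E_{r+1}$ bookkeeping by a single appeal to the collapse.
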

\begin{proof}
Let $E^{*,*}_r$ be the $E_r$-term in the Lyndon-Hochschild-Serre spectral sequence associated to
$\Gamma = L \rtimes_{\phi} G$. Notice for the sequel that $E^{i,j}_2 = H^i(G;H^j(L))$ vanishes for
$j > n$ and is finite for $i > 0$ and hence the same statement holds for $E_r^{i,j}$ for $r = 3,4 \ldots$ and $r = \infty$.

We first show for $r \ge 2$ the following equality
\begin{eqnarray}
\prod_{i,j, i+j \in \{2k,2k+1\}} \; |E^{i,j}_r|^{(-1)^{i+j}} 
& = & 
\prod_{i,j, i+j \in \{2k,2k+1\}} \; |E^{i,j}_{r+1}|^{(-1)^{i+j}}. 
\label{passage_from_Er_to_Er_plus_1}
\end{eqnarray}
Notice that $E^{i,j}_r$ vanishes if $i$ or $j$ is negative.
The differentials in the $E_r$-term yield for non-negative integers $a$ and $b$
$\IZ G$-chain complexes $C_*^{(a,b)}$ of $\IZ G$-modules which are finitely
generated as abelian groups if we put
\[
C_* = E^{a + r\cdot \ast,b- (r-1) \cdot \ast}_r
\]
If $a + b > n$, then $C^{(a,b)}_*$ is a finite-dimensional chain complex of finite abelian groups
and hence we get
\begin{eqnarray*}
\prod_{l \in \IZ} \; |C_l^{(a,b)}|^{(-1)^l}
& = & 
\prod_{l \in \IZ} \; \bigl|H_l(C_*^{(a,b)})\bigr|^{(-1)^l}.
\end{eqnarray*}
Since $H_l(C_*^{(a,b)}) = E_{r+1}^{a + r\cdot l,b- (r-1) \cdot l}$, we conclude provided that $a  + b > n$ holds
\begin{eqnarray*}
\prod_{l \in \IZ} \;\bigl|E^{a + r\cdot l ,b- (r-1)\cdot l}_r\bigr|^{(-1)^l}
& = &
\prod_{l \in \IZ} \; \bigl|E_{r+1}^{a + r\cdot l,b- (r-1)\cdot l}\bigr|^{(-1)^l}.
\end{eqnarray*}
If we let $a$ run through $\{0,1,\ldots, (r-1)\}$ and $b$ through $\{2k+ j \mid j = 0,1\}$
and take the product of the equalities above raised to the $(-1)^{a+j}$-th power for these values, we conclude
\begin{multline*}
\prod_{a = 0}^{r-1} \;\prod_{j = 0}^{1} \;\prod_{l \in \IZ} \;\bigl|E^{a + r\cdot l , 2k+j- (r-1)\cdot l}_r\bigr|^{(-1)^{a + j + l}}
\\
=  
\prod_{a = 0}^{r-1} \;\prod_{j = 0}^{1} \;\prod_{l \in \IZ} \; \bigl|E^{a + r\cdot l , 2k+j- (r-1)\cdot l}_{r+1}\bigr|^{(-1)^{a + j + l}}.
\end{multline*}
One easily checks
\begin{eqnarray*}
a +j + l \equiv 0 \mod 2 & \Leftrightarrow & 2k - (a + r\cdot l) \equiv 2k + j - (r-1) \cdot l\mod 2
\\
a +j + l \equiv 1 \mod 2 & \Leftrightarrow & 2k + 1 - (a + r\cdot l) \equiv 2k + j - (r-1) \cdot l \mod 2
\end{eqnarray*}
In the Lyndon-Hochschild-Serre spectral sequence the cup product with a
generator $\mu \in E^{2,0}_2 = H^2(G;H^0(L)) = H^2(G) \cong \IZ/m$ induces
isomorphisms 
$E^{i,j}_2 = H^i(G;H^j(L)) \xrightarrow{\cong} E^{i+2,j}_2 = H^{i+2}(G;H^j(L))$ 
for $i > 0$ and $j \ge 0$.  All differentials starting or
ending at $E^{2,0}_r$ are zero since the edge homomorphism $H^2(G) \to
H^2(\Gamma)$ is injective. Hence $E^{2,0}_2 = E^{2,0}_r = E^{2,0}_{\infty}$ and
the cup product with $\mu$ induces isomorphisms 
$E^{i,j}_r \xrightarrow{\cong} E^{i+2,j}_r $ for $i + j > n$ 
since an isomorphism of chain complexes induces an
isomorphism on homology.  This implies
\[
\begin{array}{lcll}
\bigl|E^{a + r\cdot l , 2k+j- (r-1)\cdot l}_r\bigr|^{(-1)^{a + j + l}} 
& = & \bigl|E^{a + r\cdot l , 2k - (a + r\cdot l) }_r\bigr|^{(-1)^{2k}} 
& \text{if} \;
a + j + l  \equiv 0 \mod 2;
\\
\bigl|E^{a + r\cdot l , 2k+j- (r-1)\cdot l}_r\bigr|^{(-1)^{a + j + l}} 
& = & \bigl|E^{a + r\cdot l , 2k +1 - (a + r\cdot l)}_r\bigr|^{(-1)^{2k+1}} 
& \text{if} \;
a + j + l  \equiv 1 \mod 2,
\end{array}
\]
and analogously for $r$ replaced by $(r+1)$. Hence we obtain
\[
\prod_{a = 0}^{r-1} \;\prod_{c = 0}^{1} \;\prod_{l \in \IZ} \;\bigl|E^{a + r\cdot l , 2k+c - (a + r \cdot l)}_r\bigr|^{(-1)^c}
=  
\prod_{a = 0}^{r-1} \;\prod_{c = 0}^{1} \;\prod_{l \in \IZ} \;\bigl|E^{a + r\cdot l , 2k+c - (a + r \cdot l)}_{r+1}\bigr|^{(-1)^c}.
\]
But this is the same as the desired equality~\eqref{passage_from_Er_to_Er_plus_1}
since there is the bijection
\[
\{(a,c,l) \mid a\in\{0,1,\dots,r-1\}, c\in\{0,1\}, l\in\IZ\} \to\{(i,j) \mid i,j\in\IZ, i+j\in\{2k,2k+1\} \}
\]
given by $(a,c,l)\mapsto (a+r\cdot l, 2k+c-(a+r\cdot l))$. 
This finishes the proof of~\eqref{passage_from_Er_to_Er_plus_1}. 

We conclude from~\eqref{passage_from_Er_to_Er_plus_1} by induction over $r \ge 2$.
\begin{eqnarray*}
\prod_{i,j, i+j \in \{2k,2k+1\}} |E^{i,j}_{2}|^{(-1)^{i+j}} 
& = & 
\prod_{i,j, i+j \in \{2k,2k+1\}} |E^{i,j}_{\infty}|^{(-1)^{i+j}}. 
\end{eqnarray*}
Since the terms $E_{\infty}^{i,j}$  are quotients of a filtration of $H^{i+j}(\Gamma)$ and
$E_2^{i,j} = H^i(G;H^j(L))$, this implies
\begin{eqnarray*}
\prod_{i,j, i+j \in \{2k,2k+1\}} |H^i(G;H^j(L))|^{(-1)^{i+j}} 
& = & 
\frac{|H^{2k}(\Gamma)|}{|H^{2k+1}(\Gamma)|}.
\end{eqnarray*}
Since $\widehat{H}^i(G,H^j(L)) \cong H^i(G;H^j(L)) \cong \widehat{H}^{i+2}(G,H^j(L))  \cong H^{i+2}(G;H^j(L))$ holds for $i > 0$,
we conclude
\begin{eqnarray*}
\widehat{h}\bigl(\chi^G_h(L)\bigr)
& = & 
\widehat{h}\biggl(\sum_{j \ge 0} (-1)^j \cdot H^j(L)\biggr)
\\
& = & 
\prod_{i = 0}^1 \prod_{j \ge 0} \; \bigl|\widehat{H}^i(G;H^j(L))\bigr|^{(-1)^{i+j}}
\\
& = & 
\prod_{i,j, i+j \in \{2k,2k+1\}} |H^i(G;H^j(L))|^{(-1)^{i+j}} 
\\
& = & 
\frac{|H^{2k}(\Gamma)|}{|H^{2k+1}(\Gamma)|}.
\end{eqnarray*}
\end{proof}

Let $A(G)$ be the \emph{Burnside ring of $G$}, i.e., the Grothendieck construction applied to the
semi-ring of isomorphisms classes of finite $G$-sets under disjoint union and cartesian product.
Given a finite $G$-$CW$-complex $X$, define its \emph{$G$-Euler characteristic}
\begin{eqnarray}
\chi^G(X) \in A(G)
\label{G_Euler-characteristic}
\end{eqnarray}
by the sum $\sum_{c} (-1)^{\dim(c)} \cdot [t(c)]$, where $c$ runs though the equivariant cells of $X$,
$\dim(c)$ is the dimension of $c$ and $t(c)$ is given by the orbit though one point in the interior of $c$. 
If $c$ is obtained by attaching $G/H \times D^k$, then $\dim(c) = k$ and $t(c) = G/H$.
Let
\begin{eqnarray}
r \colon A(G) & \to & \Sw(G)
\label{map_A(G)_to_sw(G)}
\end{eqnarray}
be the map sending the class of a finite $G$-set $S$ to the associated $\IZ G$-permutation module $\IZ[S]$ with $S$ as $\IZ$-basis.
\begin{lemma} \label{lem:chiG_and_chG_h}
Let $X$ be a finite $G$-$CW$-complex. Then
\[
r(\chi^G(X)) = \chi^G_h(X).
\]
\end{lemma}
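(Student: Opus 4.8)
The plan is to identify both sides with the equivariant Euler characteristic, computed in $\Sw(G)$, of the cellular $\IZ G$-cochain complex of $X$.

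First I would fix a $G$-$CW$-structure on $X$ and, for each $i\ge 0$, let $S_i$ denote the finite $G$-set of open $i$-cells. Then the cellular chain module $C_i(X)=\IZ[S_i]$ is the permutation $\IZ G$-module on $S_i$, and the cellular cochain group $C^i(X;\IZ)=\hom_{\IZ}(\IZ[S_i],\IZ)$ is $\IZ G$-isomorphic to $\IZ[S_i]$ via $f\mapsto \sum_{s\in S_i} f(s)\cdot s$. Comparing with the definition of the $G$-Euler characteristic~\eqref{G_Euler-characteristic} and of $r$ in~\eqref{map_A(G)_to_sw(G)}, this yields
\[
r\bigl(\chi^G(X)\bigr)=\sum_{i\ge 0}(-1)^i\cdot\bigl[\IZ[S_i]\bigr]=\sum_{i\ge 0}(-1)^i\cdot\bigl[C^i(X;\IZ)\bigr]\quad\text{in }\Sw(G),
\]
where these classes are defined because each $C^i(X;\IZ)$ is finitely generated as an abelian group (indeed free of finite rank) and vanishes for $i$ large, $X$ being finite. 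It then remains to show that this alternating sum agrees with $\chi^G_h(X)=\sum_{i\ge 0}(-1)^i\cdot[H^i(X;\IZ)]$, where $H^i(X;\IZ)$ carries the $\IZ G$-module structure induced by the $G$-action on $X$.

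This is the standard fact that an Euler characteristic in a Grothendieck group is computed on (co)homology. Writing $Z^i=\ker\bigl(\delta^i\colon C^i(X;\IZ)\to C^{i+1}(X;\IZ)\bigr)$ and $B^i=\im\bigl(\delta^{i-1}\bigr)$, the short exact sequences of $\IZ G$-modules $0\to Z^i\to C^i(X;\IZ)\to B^{i+1}\to 0$ and $0\to B^i\to Z^i\to H^i(X;\IZ)\to 0$, whose terms are all finitely generated as abelian groups, give in $\Sw(G)$ the relations $[C^i(X;\IZ)]=[Z^i]+[B^{i+1}]$ and $[Z^i]=[B^i]+[H^i(X;\IZ)]$, hence $[C^i(X;\IZ)]=[B^i]+[H^i(X;\IZ)]+[B^{i+1}]$. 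Summing with alternating signs over the finitely many nonzero terms, the $[B^i]$ contributions telescope to zero (note $B^i=0$ for $i<0$ and for $i$ large), and one is left with $\sum_{i}(-1)^i[C^i(X;\IZ)]=\sum_{i}(-1)^i[H^i(X;\IZ)]=\chi^G_h(X)$, as desired.

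I expect no genuine obstacle here. The only points requiring a word of care are the $\IZ G$-module identification $\hom_{\IZ}(\IZ[S],\IZ)\cong\IZ[S]$ for a finite $G$-set $S$, which makes the first display an equality on the nose rather than merely up to duality, and the bookkeeping in the telescoping argument. It is, however, essential to work with the cellular \emph{cochain} complex here: using the chain complex instead would produce $\sum_i(-1)^i[H_i(X;\IZ)]$, and one would then additionally have to invoke the natural --- hence $G$-equivariant --- universal coefficient short exact sequence to identify this with $\chi^G_h(X)$ in $\Sw(G)$.
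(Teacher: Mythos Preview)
Your proof is correct and follows the same strategy as the paper: express $r(\chi^G(X))$ as the alternating sum of the classes of the cellular (co)chain modules in $\Sw(G)$, then pass to (co)homology by the standard telescoping argument. The only difference is that the paper works on the chain side, computing $\sum_{k}(-1)^k[C_k(X)]=\sum_{k}(-1)^k[H_k(X)]$ directly, whereas you first use the self-duality $\hom_{\IZ}(\IZ[S_i],\IZ)\cong\IZ[S_i]$ of permutation modules to pass to cochains and land on $\sum_i(-1)^i[H^i(X;\IZ)]$. As you yourself observe in your last paragraph, your route matches the superscript in the paper's definition of $\chi^G_h$ on the nose, while the chain-level computation the paper writes down implicitly needs the equivariant universal coefficient sequence to reconcile $[H_k(X)]$ with $[H^k(X)]$ in $\Sw(G)$.
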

\begin{proof}
This follows from the following computation in $\Sw(G)$ based on the fact
that $C_k(X) \cong_{\IZ G} \bigoplus_{c, \dim(c) = k} \IZ[t(c)]$
\begin{eqnarray*}
r(\chi^G(X)) 
& = & 
r \biggl(\sum_c (-1)^{\dim(c)} \cdot [t(c)]\biggr)
\\
& = &
\sum_c (-1)^{\dim(c)} \cdot \IZ[t(c)]
\\
& = & 
\sum_{k \ge 0} (-1)^k \cdot [C_k(X)]
\\
& = & 
\sum_{k \ge 0} (-1)^k \cdot [H_k(X)]
\\
& = &
\chi^G_h(X).
\end{eqnarray*}
\end{proof}

\begin{theorem}[Group cohomology and the equivariant Eulercharacteristic]
  \label{the:Group_cohomology_and_equivariant_Eulercharacteristic}
  Let $k$ be an integer such that $2k > n$. Then $H^{2k}(\Gamma)$ and
  $H^{2k+1}(\Gamma)$ are finite and
  \[
  \frac{|H^{2k}(\Gamma)|}{|H^{2k+1}(\Gamma)|} =
  \widehat{h} \circ r\bigl(\chi^G(L\backslash \underline{E}\Gamma)\bigr)
  \]
  where the homomorphism of abelian groups given by the composite $\widehat{h} \circ r \colon A(G) \to \IQ^{> 0}$
  sends $[G/H]$ to $|H|$.
\end{theorem}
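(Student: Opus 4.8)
The plan is to reduce the statement to the two lemmas already established in this section, namely Lemma~\ref{lem:widehat_and_homological_Euler_characteristic} and Lemma~\ref{lem:chiG_and_chG_h}, by recognizing $L\backslash\underline{E}\Gamma$ as a finite $G$-$CW$-model for $BL$. The first step is to fix the $n$-dimensional model $\underline{E}\Gamma = \IR\otimes_{\IZ} L$ with its affine $\Gamma$-action, as in the proof of Theorem~\ref{the:cohomology_of_Gamma}. Since this model is cocompact (cf.\ \cite[Example~5.26]{Lueck(2005s)}), the quotient $X := L\backslash\underline{E}\Gamma$ is a finite $G$-$CW$-complex — an $n$-torus with residual $G = \Gamma/L$-action — so that $\chi^G(X)\in A(G)$ and the expression $\widehat{h}\circ r\bigl(\chi^G(X)\bigr)$ make sense.

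Next I would observe that $\underline{E}\Gamma$ is contractible and that $L$, being torsion-free, acts freely on it; hence $X$ is a $K(L,1)$, and the residual $G$-action realizes the given action $\rho$ of $G$ on $H^*(X;\IZ) = H^*(L;\IZ) = \Lambda^* L\dual$. The one point requiring a little care is the passage between homology and cohomology inside $\Sw(G)$: since $X$ is a finite $G$-$CW$-complex, each cellular chain module $C_k(X)$ is the permutation module on the set of $k$-cells, hence $\IZ G$-isomorphic to its $\IZ$-dual $\hom_{\IZ}(C_k(X),\IZ) = C^k(X)$; applying the defining relations of $\Sw(G)$ to the chain and cochain complexes of $X$ then gives
\[
\sum_{k\ge 0}(-1)^k\,[H_k(X)] = \sum_{k\ge 0}(-1)^k\,[C_k(X)] = \sum_{k\ge 0}(-1)^k\,[C^k(X)] = \sum_{k\ge 0}(-1)^k\,[H^k(X)]
\]
in $\Sw(G)$. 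Combining this with Lemma~\ref{lem:chiG_and_chG_h} and the identification $H^k(X)\cong H^k(L)$ of $\IZ G$-modules yields $r\bigl(\chi^G(L\backslash\underline{E}\Gamma)\bigr) = \chi^G_h(X) = \chi^G_h(L)$ in $\Sw(G)$.

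It then remains to apply the homomorphism $\widehat{h}\colon \Sw(G)\to\IQ^{>0}$ to this identity and invoke Lemma~\ref{lem:widehat_and_homological_Euler_characteristic} to obtain $\widehat{h}\circ r\bigl(\chi^G(L\backslash\underline{E}\Gamma)\bigr) = \widehat{h}\bigl(\chi^G_h(L)\bigr) = |H^{2k}(\Gamma)|/|H^{2k+1}(\Gamma)|$; the finiteness of $H^{2k}(\Gamma)$ and $H^{2k+1}(\Gamma)$ for $2k>n$ follows from Theorem~\ref{the:cohomology_of_Gamma} (equivalently, it is already contained in the proof of Lemma~\ref{lem:widehat_and_homological_Euler_characteristic}). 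Finally, for the explicit description of $\widehat{h}\circ r$ I would use Shapiro's Lemma for Tate cohomology, $\widehat{H}^i(G;\IZ[G/H])\cong\widehat{H}^i(H;\IZ)$, together with $\widehat{H}^0(H;\IZ)\cong\IZ/|H|$ and $\widehat{H}^1(H;\IZ) = 0$ for the cyclic group $H$, which gives $\widehat{h}\circ r([G/H]) = \widehat{h}\bigl([\IZ[G/H]]\bigr) = |H|$.

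The main obstacle is not a deep one: it is the bookkeeping needed to be sure the residual $G$-action on $H^*(L\backslash\underline{E}\Gamma)$ is correctly identified with $(\Lambda^* L\dual,\rho)$ and that the homological and cohomological Euler characteristics genuinely coincide in $\Sw(G)$ (this is where the self-duality of permutation chain modules enters, and it is what lets us match the \emph{cohomological} $\chi^G_h(L)=\sum_j(-1)^j[H^j(L)]$ with the \emph{homological} quantity $r(\chi^G(X))$). One should also make sure that the chosen model for $\underline{E}\Gamma$ is indeed cocompact, so that $\chi^G(L\backslash\underline{E}\Gamma)$ is defined in the first place.
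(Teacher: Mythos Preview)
Your proposal is correct and follows essentially the same route as the paper: compute $\widehat{h}\circ r([G/H])=|H|$ via Shapiro's Lemma for Tate cohomology, then combine Lemma~\ref{lem:widehat_and_homological_Euler_characteristic} and Lemma~\ref{lem:chiG_and_chG_h}. In fact you are more careful than the paper in spelling out why $L\backslash\underline{E}\Gamma$ is a finite $G$-$CW$-model for $BL$ and why the homological and cohomological Euler characteristics agree in $\Sw(G)$ via self-duality of the permutation chain modules; the paper simply writes ``Now apply'' the two lemmas and leaves these bridging observations implicit.
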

\begin{proof} We compute
\[
\widehat{h} \circ r\bigl([G/H]\bigr)
=
\frac{\widehat{H}^0(G;\IZ[G/H])}{\widehat{H}^1(G;\IZ[G/H])}
=
\frac{\widehat{H}^0(H;\IZ)}{\widehat{H}^1(H;\IZ)}
= \frac{|H|}{1} = |H|.
\]
Now apply Lemma~\ref{lem:widehat_and_homological_Euler_characteristic} and
Lemma~\ref{lem:chiG_and_chG_h}.
\end{proof}

\begin{remark}[$L^2$-Euler  characteristic]\label{L2-Euler_characteristic}
In~\cite[Definition~6.83 and Definition~6.84 on page~281]{Lueck(2002)} a Burnside group $A(\Gamma)$ and a 
$\Gamma$-$CW$-Euler characteristic 
\begin{eqnarray}
& \chi^{\Gamma}(\underline{E}\Gamma) \in A(\Gamma) &
\label{chiGamma}
\end{eqnarray}
is defined. There is a homomorphism  of groups
\begin{eqnarray}
q \colon A(\Gamma) & \to & A(G)
\label{A(Gamma)_to_A(G)}
\end{eqnarray}
which sends the class $[S]$ of a proper cocompact $\Gamma$-set $S$ 
to the class $[L\backslash S]$ of the finite $G$-set $L \backslash S$.
One easily checks
\begin{eqnarray*}
q\bigl(\chi^{\Gamma}(\underline{E}\Gamma)\bigr) & = & \chi^G(L\backslash \underline{E}\Gamma).
\label{chiGamma_and_chiG}
\end{eqnarray*}
There is an injective homomorphism called \emph{global} $L^2$-character map
(see~\cite[Definition~6.86 on page~282]{Lueck(2002)})
\begin{eqnarray}
\ch^{\Gamma} \colon A(\Gamma) \to \prod_{(K)} \IQ
\label{global_character_map}
\end{eqnarray}
where $(K)$ runs through the conjugacy classes of finite subgroups of $\Gamma$.
It is rationally an isomorphism. Since $\Gamma$ is amenable, we conclude 
from~\cite[Lemma~6.93 on page~284]{Lueck(2002)})
\begin{eqnarray}
\ch^{\Gamma}\bigl(\chi^{\Gamma}(\underline{E}\Gamma)\bigr)_{(K)}
=
\begin{cases}
0 & \text{if}\; |W_{\Gamma}K| = \infty;
\\
\frac{1}{|W_{\Gamma}K|}  & \text{if}\; |W_{\Gamma}K| < \infty.
\end{cases}
\label{chGamma_(chiGamma_(underlineEGamma)}
\end{eqnarray}
where $W_{\Gamma}K := N_{\Gamma}K/K$.
\end{remark}

\begin{example}[$G$-action has non-trivial fixed point]
\label{exa:G-action_has_non-trivial_fixed_point}
Suppose that $L^G \not = 0$. Then $|W_{\Gamma} K| = 0$ for all finite subgroups $K \subseteq \Gamma$,
and we conclude from Theorem~\ref{the:Group_cohomology_and_equivariant_Eulercharacteristic}
and Remark~\ref{L2-Euler_characteristic} for $2k > n$
\[
|H^{2k}(\Gamma) = |H^{2k+1}(\Gamma)|.
\]
\end{example}

\begin{example}\label{exa:n_is_p_for_group_cohomology_and_equivaraint_Euler}
  Suppose that $m = p$ for a prime $p$. 
  If $G$ acts not free outside the origin on $L$, we conclude from
  Example~\ref{exa:G-action_has_non-trivial_fixed_point}.
  \[|H^{2k}(\Gamma)| = H^{2k+1}(\Gamma)|.\] Suppose that $G$ acts free outside
  the origin on $L$.  Let $\calp$ be a complete set of representatives of the
  conjugacy classes $(P)$ of finite non-trivial subgroups $P \subseteq
  \Gamma$.  Notice that for each $P$ the projection $\Gamma \to G$ induces an
  isomorphism $P \xrightarrow{\cong} G \cong \IZ/p$ and we have $W_{\Gamma} P =  \{1\}$. 
We conclude from Remark~\ref{L2-Euler_characteristic} by inspecting
  the definition of the global character map $\ch^{\Gamma}$ (see~\cite[Example~6.94 on page~184]{Lueck(2002)})
  \[
  \chi^{\Gamma}(\underline{E}\Gamma) = \frac{-|\calp|}{p} \cdot [\Gamma] +
  \sum_{P \in \calp} \;[\Gamma/P] \quad \in A(\Gamma).
  \]
  and hence
  \begin{eqnarray*}
    \chi^G(L \backslash \underline{E}G) & = & - \frac{|\calp|}{p}   \cdot [G] +   |\calp| \cdot [G/G] \quad \in A(G).
  \end{eqnarray*}
  We mention that
  \[|\calp| = |H^1(G;L)| = \bigl |(L\backslash \underline{E}\Gamma)^G\bigr| =  p^s\] 
   by~\cite[Lemma~1.9]{Davis-Lueck(2010)}, where $s$ is the integer
  uniquely determined by $N = (p-1)\cdot s$.  Theorem~\ref{the:Group_cohomology_and_equivariant_Eulercharacteristic}
  implies for $2k > n$
  \begin{eqnarray*}
   \frac{|H^{2k}(\Gamma)|}{|H^{2k+1}(\Gamma)|} & = &   p^s.
  \end{eqnarray*}
  All this is consistent with the
  computation in Theorem~\ref{the:cohomology_of_Gamma} for $2k > n$
  \begin{eqnarray*}
    H^{2k}(\Gamma) & = & \prod_{(P) \in \calp} H^{2k}(M)  \cong \prod_{P \in \calp} \IZ/p
    \\
    H^{2k+1}(\Gamma) & = & 0.
  \end{eqnarray*}
\end{example}

%%%%%%%%%%%%%%%%%%%%%%%%%%%  References %%%%%%%%%%%%%%%%%%%%%%%%%%%%%%%%%%%%%%%%%%%%%

\typeout{-------------------------------------  References    -------------------------------}

%\bibliographystyle{abbrv}

%\bibliography{dbpub,dbpre}

\begin{thebibliography}{10}

\bibitem{Adem-Ge-Pan-Petrosyan(2008)}
A.~Adem, J.~Ge, J.~Pan, and N.~Petrosyan.
\newblock Compatible actions and cohomology of crystallographic groups.
\newblock {\em J. Algebra}, 320(1):341--353, 2008.

\bibitem{Adem-Pan(2006)}
A.~Adem and J.~Pan.
\newblock Toroidal orbifolds, {G}erbes and group cohomology.
\newblock {\em Trans. Amer. Math. Soc.}, 358(9):3969--3983 (electronic), 2006.

\bibitem{Akin-Buchsbaum-Weyman(1982)}
K.~Akin, D.~A. Buchsbaum, and J.~Weyman.
\newblock Schur functors and {S}chur complexes.
\newblock {\em Adv. in Math.}, 44(3):207--278, 1982.

\bibitem{Atiyah-McDonald(1969)}
M.~F. Atiyah and I.~G. Macdonald.
\newblock {\em Introduction to commutative algebra}.
\newblock Addison-Wesley Publishing Co., Reading, Mass.-London-Don Mills, Ont.,
  1969.

\bibitem{Brown(1982)}
K.~S. Brown.
\newblock {\em Cohomology of groups}, volume~87 of {\em Graduate Texts in
  Mathematics}.
\newblock Springer-Verlag, New York, 1982.

\bibitem{Charlap-Vasquez(1969)}
L.~S. Charlap and A.~T. Vasquez.
\newblock Characteristic classes for modules over groups. {I}.
\newblock {\em Trans. Amer. Math. Soc.}, 137:533--549, 1969.

\bibitem{Cuntz-Li(2009integers)}
J.~Cuntz and X.~Li.
\newblock {$C^*$}-algebras associated with integral domains and crossed
  products by actions on adele spaces.
\newblock Preprint, arXiv:0906.4903v1 [math.OA], to appear in the Journal of
  Non-Commuttaive Geometry, 2009.

\bibitem{Cuntz-Li(2009function)}
J.~Cuntz and X.~Li.
\newblock {$K$}-theory for ring {$C^*$}-algebras attached to function fields.
\newblock Preprint, arXiv:0911.5023v1 [math.OA], 2009.

\bibitem{Davis-Lueck(2010)}
J.~Davis and W.~L{\"u}ck.
\newblock The topological ${K}$-theory of certain crystallographic groups.
\newblock Preprint, arXiv:1004.2660v1, to appear in Journal of Non-Commutative
  Geometry, 2010.

\bibitem{Lueck(2002)}
W.~L{\"u}ck.
\newblock {\em {$L\sp 2$}-{I}nvariants: {T}heory and {A}pplications to
  {G}eometry and \mbox{{$K$}-{T}heory}}, volume~44 of {\em Ergebnisse der
  Mathematik und ihrer Grenzgebiete. 3.~Folge. A Series of Modern Surveys in
  Mathematics [Results in Mathematics and Related Areas. 3rd Series. A Series
  of Modern Surveys in Mathematics]}.
\newblock Springer-Verlag, Berlin, 2002.

\bibitem{Lueck(2005s)}
W.~L{\"u}ck.
\newblock Survey on classifying spaces for families of subgroups.
\newblock In {\em Infinite groups: geometric, combinatorial and dynamical
  aspects}, volume 248 of {\em Progr. Math.}, pages 269--322. Birkh\"auser,
  Basel, 2005.

\bibitem{Lueck-Stamm(2000)}
W.~L{\"u}ck and R.~Stamm.
\newblock Computations of ${K}$- and ${L}$-theory of cocompact planar groups.
\newblock {\em $K$-Theory}, 21(3):249--292, 2000.

\bibitem{Lueck-Weiermann(2007)}
W.~L\"uck and M.~Weiermann.
\newblock On the classifying space of the family of virtually cyclic subgroups.
\newblock Preprintreihe SFB 478 --- Geometrische Strukturen in der Mathematik,
  Heft 453, M\"unster, arXiv:math.AT/0702646v2, to appear in the Proceedings in
  honour of Farrell and Jones in Pure and Applied Mathematic Quarterly, 2007.

\bibitem{Milnor(1971)}
J.~Milnor.
\newblock {\em Introduction to algebraic ${K}$-theory}.
\newblock Princeton University Press, Princeton, N.J., 1971.
\newblock Annals of Mathematics Studies, No. 72.

\bibitem{Neukirch(1999)}
J.~Neukirch.
\newblock {\em Algebraic number theory}.
\newblock Springer-Verlag, Berlin, 1999.
\newblock Translated from the 1992 German original and with a note by Norbert
  Schappacher, With a foreword by G. Harder.

\bibitem{Siegel(1995)}
S.~F. Siegel.
\newblock The spectral sequence of a split extension and the cohomology of an
  extraspecial group of order {$p^3$} and exponent {$p$}.
\newblock {\em J. Pure Appl. Algebra}, 106(2):185--198, 1996.

\end{thebibliography}

%\version{14.05.2011}

\end{document}